\title{Rate optimality of adaptive finite element methods\\with respect to overall computational costs}
\author{Gregor Gantner}
\author{Alexander Haberl}
\author{Dirk Praetorius}
\author{Stefan Schimanko}
\address{University of Amsterdam, Korteweg-De Vries Institute for Mathematics, P.O.\ Box 94248, 1090 GE Amsterdam, The Netherlands}
\email{g.gantner@uva.nl}
\address{TU Wien, Institute for Analysis and Scientific Computing, Wiedner Hauptstr.\ 8-10/E101/4, 1040 Wien, Austria}
\email{alexander.haberl@asc.tuwien.ac.at}
\email{dirk.praetorius@asc.tuwien.ac.at\qquad\rm(corresponding author)}
\email{stefan.schimanko@asc.tuwien.ac.at}
\subjclass[2010]{65N30, 65N50, 65Y20, 65N22, 41A25.}
\thanks{{\bf Acknowledgement.} The authors thankfully acknowledge support by the Austrian Science Fund (FWF) through the doctoral school \emph{Dissipation and dispersion in nonlinear PDEs} (grant W1245), the SFB \emph{Taming complexity in partial differential systems} (grant SFB F65), and the stand-alone projects \emph{Optimal isogeometric boundary element methods} (grant P29096), \emph{Computational nonlinear PDEs} (grant P33216), and \emph{Optimal adaptivity for space-time methods} (grant J4379-N)}
\def\set#1#2{\big\{#1 \,:\, #2 \big\}}
\def\QQ{\mathcal{Q}}
\def\Alpha{{\rm A}}
\def\dist{{\rm d\!l}}
\def\eps{\varepsilon}
\newcommand*\patchAmsMathEnvironmentForLineno[1]{%
  \expandafter\let\csname old#1\expandafter\endcsname\csname #1\endcsname
  \expandafter\let\csname oldend#1\expandafter\endcsname\csname end#1\endcsname
  \renewenvironment{#1}%
     {\linenomath\csname old#1\endcsname}%
     {\csname oldend#1\endcsname\endlinenomath}}% 
\newcommand*\patchBothAmsMathEnvironmentsForLineno[1]{%
  \patchAmsMathEnvironmentForLineno{#1}%
  \patchAmsMathEnvironmentForLineno{#1*}}%
\def\@seccntformat#1{\hspace*{4mm}%
  \protect\textup{\protect\@secnumfont
    \ifnum\pdfstrcmp{subsection}{#1}=0 \bfseries\fi% subsection # in \bfseries
    \csname the#1\endcsname
    \protect\@secnumpunct
  }%
}
\def\section{\@startsection{section}{1}%
\z@{.7\linespacing\@plus\linespacing}{.5\linespacing}%
{\normalsize\scshape\bfseries\centering}}
\renewcommand{\@secnumfont}{\bfseries}
\def\coarse{H}
\def\fine{h}
\def\N{\mathbb{N}}
\def\T{\mathbb{T}}
\def\MM{\mathcal{M}}
\def\TT{\mathcal{T}}
\def\XX{\mathcal{X}}
\def\k{{\underline{k}}}
\def\Crel{C_{\rm rel}}
\def\Cpcg{C_{\rm pcg}}
\def\qpcg{q_{\rm pcg}}
\def\Cstab{C_{\rm stab}}
\def\qred{q_{\rm red}}
\def\Clin{C_{\rm lin}}
\def\qlin{q_{\rm lin}}
\def\qctr{q_{\rm ctr}}
\def\Ccea{C_{\text{\rm C\'ea}}}
\def\refine{{\tt refine}}
\def\enorm#1{|\!|\!| #1 |\!|\!|}
\def\reff#1#2{\stackrel{\eqref{#1}}{#2}}
\def\dual#1#2{\langle#1\,,\,#2\rangle}
\def\Cmark{C_{\rm mark}}
\def\Copt{C_{\rm opt}}
\def\norm#1#2{\|#1\|_{#2}}
\def\A{\mathbb{A}}
\def\OO{\mathcal{O}}
\def\R{\mathbb{R}}
\def\d#1{\,{\rm d}#1}
\def\div{{\rm div}\,}
\def\lctr{\lambda_{\rm ctr}}
\def\HH{\mathcal{H}}
\def\AA{\mathcal{A}}
\def\HH{\mathcal{H}}
\def\EE{\mathcal{E}}
\def\K{\mathbb{K}}
\def\C{\mathbb{C}}
\def\product#1#2{\langle\!\langle #1,#2 \rangle\!\rangle}
\def\QQ{\mathcal{Q}}
\def\UU{\mathcal{U}}
\def\lconv{\lambda_{\rm conv}}
\def\Cghps{C_{\rm ghps}}
\def\qghps{q_{\rm ghps}}
\def\Csum{C_{\rm sum}}
\def\RR{\mathcal{R}}
\def\opt{{\rm opt}}
\def\lopt{\lambda_{\rm opt}}
\def\copt{c_{\rm opt}}
\newcounter{statement}
\newenvironment{statement}[2][!]{%
% \begin{statement}
\vskip3mm
\hrule
\hrule
\hrule
\vskip1mm
\noindent%
%dpr%\begin{boxedminipage}{\textwidth}
%%\begin{mdframed}[everyline=true]
\refstepcounter{statement}%
\bf#2~\thestatement%
\ifthenelse{\equal{#1}{!}}{.\ }{~(#1).\ }%
\it%
}{%
% \end{statement}
%dpr%\end{boxedminipage}
%%\end{mdframed}
\vskip1mm
\hrule
\hrule
\hrule
\vskip2mm
%\vspace{1mm}
}
\newenvironment{theorem}[1][!]{\begin{statement}[#1]{Theorem}}{\end{statement}}
\newenvironment{lemma}[1][!]{\begin{statement}[#1]{Lemma}}{\end{statement}}
\newenvironment{proposition}[1][!]{\begin{statement}[#1]{Proposition}}{\end{statement}}
\newenvironment{corollary}[1][!]{\begin{statement}[#1]{Corollary}}{\end{statement}}
\newenvironment{remark}[1][!]{\begin{statement}[#1]{Remark}}{\end{statement}}
\newenvironment{algorithm}[1][!]{\begin{statement}[#1]{Algorithm}}{\end{statement}}
\begin{document}

%%%%%%%%%%%%%%%%%%%%%%%%%%%%%%%%%%%%%%%%%%%%%%%%%%%%%%%%%%%%%%%%%%%%%%%%%%%%%%%%%%%
%%%%%%%%%%%%%%%%%%%%%%%%%%%%%%%%%%%%%%%%%%%%%%%%%%%%%%%%%%%%%%%%%%%%%%%%%%%%%%%%%%%
\begin{abstract}
We consider adaptive finite element methods for second-order elliptic PDEs, where the arising discrete systems are not solved exactly. For contractive iterative solvers, we formulate an adaptive algorithm which monitors and steers the adaptive mesh-refinement as well as the inexact solution of the arising discrete systems. We prove that the proposed strategy leads to linear convergence with optimal algebraic rates. Unlike prior works, however, we focus on convergence rates with respect to the overall computational costs. In explicit terms, the proposed adaptive strategy thus guarantees quasi-optimal computational time. In particular, our analysis covers linear problems, where the linear systems are solved by an optimally preconditioned CG method as well as nonlinear problems with strongly monotone nonlinearity which are linearized by the so-called Zarantonello iteration.
\end{abstract}
%%%%%%%%%%%%%%%%%%%%%%%%%%%%%%%%%%%%%%%%%%%%%%%%%%%%%%%%%%%%%%%%%%%%%%%%%%%%%%%%%%%
%%%%%%%%%%%%%%%%%%%%%%%%%%%%%%%%%%%%%%%%%%%%%%%%%%%%%%%%%%%%%%%%%%%%%%%%%%%%%%%%%%%

\maketitle
\thispagestyle{fancy}

%\vspace*{-3mm}
%%%%%%%%%%%%%%%%%%%%%%%%%%%%%%%%%%%%%%%%%%%%%%%%%%%%%%%%%%%%%%%%%%%%%%%%%%%%%%%%%%%
%%%%%%%%%%%%%%%%%%%%%%%%%%%%%%%%%%%%%%%%%%%%%%%%%%%%%%%%%%%%%%%%%%%%%%%%%%%%%%%%%%%
\section{Introduction}
\label{section:introduction}
%%%%%%%%%%%%%%%%%%%%%%%%%%%%%%%%%%%%%%%%%%%%%%%%%%%%%%%%%%%%%%%%%%%%%%%%%%%%%%%%%%%
%%%%%%%%%%%%%%%%%%%%%%%%%%%%%%%%%%%%%%%%%%%%%%%%%%%%%%%%%%%%%%%%%%%%%%%%%%%%%%%%%%%

%%%%%%%%%%%%%%%%%%%%%%%%%%%%%%%%%%%%%%%%%%%%%%%%%%%%%%%%%%%%%%%%%%%%%%%%%%%%%%%%%%%
\subsection{State of the art}
%%%%%%%%%%%%%%%%%%%%%%%%%%%%%%%%%%%%%%%%%%%%%%%%%%%%%%%%%%%%%%%%%%%%%%%%%%%%%%%%%%%

The ultimate goal of any numerical scheme is to compute a discrete solution with error below a prescribed tolerance at, up to a multiplicative constant, the minimal computational costs. Since the convergence of numerical methods is usually spoiled by singularities of the (given) data as well as the (unknown) solution, {\sl a~posteriori} error estimation and related adaptive mesh-refinement strategies are indispensable tools for reliable numerical simulations. For many model problems, the mathematical understanding of rate-optimal convergence of adaptive FEM has matured; we refer to~\cite{doerfler1996,mns2000,bdd2004,stevenson2007,ckns2008,cn2012,ffp2014} for some works for linear problems, to~\cite{veeser2002,dk2008,bdk2012,gmz2012} for nonlinear problems, and to~\cite{axioms} for a general framework of convergence of adaptive FEM with optimal convergence rates. 
Some works also account for the approximate computation of the discrete solutions by iterative (and inexact) solvers; see, e.g., \cite{bms2010,agl2013} for linear problems and~\cite{gmz2011,banach,hw18,hw19} for nonlinear model problems.
Moreover, there are many papers on {\sl a~posteriori} error estimation which also include the iterative and inexact solution for nonlinear problems; see, e.g.,~\cite{eev2011,ev2013,aw2015,hw2018} and the references therein.

As far as optimal convergence rates are concerned, the mentioned works focus on rates with respect to the degrees of freedom. However, in practice, one aims for the optimal rate of convergence with respect to the computational costs, i.e., the computational time. The issue of optimal computational costs is already addressed in the seminal work~\cite{stevenson2007} for the Poisson model problem. There, it is assumed that a sufficiently accurate discrete solution can be computed in linear complexity, e.g., by a multigrid solver. Under these so-called \emph{realistic assumptions} on the solver, it is then proved that the \emph{total error} (i.e., the sum of energy error plus data oscillations) will also converge with optimal rate with respect to the computational costs.
A similar result is obtained in~\cite{cg2012} for an adaptive Laplace eigenvalue computation.

In recent own works, we have tried to include concrete solvers into the convergence analysis. In~\cite{banach}, we considered adaptive FEM for an elliptic PDE with strongly monotone nonlinearity. The arising nonlinear FEM problems are linearized via the so-called \emph{Zarantonello iteration} (or \emph{Banach--Picard iteration}), which leads to a \emph{linear} Poisson problem in each step. The adaptive algorithm drives the linearization strategy as well as the local mesh-refinement. In~\cite{banach}, we have proved that the overall strategy leads to optimal convergence rates with respect to the degrees of freedom and to \emph{almost optimal} convergence rates with respect to the total computational costs. The latter means that, if the total error converges with rate $s > 0$ with respect to the degrees of freedom, then it converges with rate $s - \eps > 0$ with respect to the overall computational costs, for all $\eps > 0$. Moreover, in~\cite{abem+solve}, we obtained analogous results for an adaptive boundary element method, where we employed a preconditioned conjugate gradient method (PCG) with optimal additive Schwarz preconditioner to approximately solve the arising linear discrete systems.

In the present work, we now prove \emph{optimal rates with respect to the overall computational costs} for the algorithm from~\cite{banach}. Moreover, we give an abstract analysis in the spirit of~\cite{axioms} and show that this also covers linear solvers like PCG. The precise contributions of this work are outlined in the remaining subsections of this introduction.

%%%%%%%%%%%%%%%%%%%%%%%%%%%%%%%%%%%%%%%%%%%%%%%%%%%%%%%%%%%%%%%%%%%%%%%%%%%%%%%%%%%
\subsection{Model problem}
%%%%%%%%%%%%%%%%%%%%%%%%%%%%%%%%%%%%%%%%%%%%%%%%%%%%%%%%%%%%%%%%%%%%%%%%%%%%%%%%%%%

We consider the elliptic boundary value problem
\begin{align}\label{eq:strong}
 \begin{split}
 -\div A(\nabla u^\star) &= f \quad \text{in } \Omega,\\
 u^\star &= 0 \quad \text{on } \Gamma := \partial\Omega,
 \end{split}
\end{align}
where $\Omega \subset \R^d$ is a bounded Lipschitz domain with $d\in\{2,3\}$, and $f \in L^2(\Omega)$ is a given load. We assume that $A : L^2(\Omega)^d \to L^2(\Omega)^d$ is strongly monotone and Lipschitz continuous (see Section~\ref{section:abstract}) so that~\eqref{eq:strong} resp.\ the equivalent variational formulation
\begin{align}\label{eq:weak}
\dual{\AA u^\star}{v}_{\HH'\times\HH}:= \int_\Omega A(\nabla u^\star) \cdot \nabla v \d{x}
= \int_\Omega f v_\ell \d{x}=:\dual{F}{v}_{\HH\times\HH'}\quad\text{for all }v\in\HH
\end{align}
  admit a unique solution $u^\star \in \HH := H^1_0(\Omega)$. Given a discrete subspace $\XX_\ell \subset \HH$ related to some triangulation $\TT_\ell$ of $\Omega$, also the discrete formulation
\begin{align}\label{eq:discrete}
\dual{\AA u_\ell^\star}{v_\ell}_{\HH\times\HH'}
 =\dual{F}{v_\ell}_{\HH'\times\HH}
 \quad \text{for all } v_\ell \in \XX_\ell
\end{align}
admits a unique solution $u_\ell^\star \in \XX_\ell$. If $A$ is nonlinear, then $u_\ell^\star$ can hardly be computed exactly. Even if $A$ is linear, usual FEM codes employ iterative solvers like PCG, GMRES, or multigrid. 

Given an initial guess $u_\ell^0 \in \XX_\ell$, we assume that we can compute iterates $u_\ell^k := \Phi_\ell(u_\ell^{k-1}) \in \XX_\ell$ which lead to a contraction in the energy norm on $\HH$, i.e.,
\begin{align}\label{eq:contraction}
 \enorm{u_\ell^\star - u_\ell^k}
 \le q \, \enorm{u_\ell^\star - u_\ell^{k-1}}
 \quad \text{for all } k \in \N
\end{align}
with some $\XX_\ell$-independent contraction constant $0 < q < 1$. In explicit terms, we assume that we have an iterative solver with iteration function $\Phi_\ell : \XX_\ell \to \XX_\ell$ which is uniformly contractive in each step. We assume that we can control the discretization error (for the exact, but never computed discrete solution $u_\ell^\star \in \XX_\ell$ from~\eqref{eq:discrete}) by some reliable {\sl a~posteriori} error estimator
\begin{align}
 \Crel^{-1} \, \enorm{u^\star - u_\ell^\star} \le \eta_\ell(u_\ell^\star) := \bigg(\sum_{T \in \TT_\ell} \eta_\ell(T,u_\ell^\star)^2 \bigg)^{1/2},
\end{align}
where the local indicators $\eta_\ell(T,\cdot)$ can also be evaluated for other discrete functions $v_\ell \in \XX_\ell$ instead of the exact Galerkin solution $u_\ell^\star \in \XX_\ell$.

%%%%%%%%%%%%%%%%%%%%%%%%%%%%%%%%%%%%%%%%%%%%%%%%%%%%%%%%%%%%%%%%%%%%%%%%%%%%%%%%%%%
\subsection{Adaptive algorithm}
%%%%%%%%%%%%%%%%%%%%%%%%%%%%%%%%%%%%%%%%%%%%%%%%%%%%%%%%%%%%%%%%%%%%%%%%%%%%%%%%%%%

In the following, we formulate our adaptive algorithm, which takes the form
\begin{align}
 \boxed{\text{ Iteratively Solve \& Estimate }}
 \quad \longrightarrow \quad
 \boxed{\text{ Mark\phantom{y}\!\!}}
 \quad \longrightarrow \quad
 \boxed{\text{ Refine\phantom{y}\!\!}}
\end{align}
where the first step may be understood (and stated) as an inner loop, and \emph{Mark} is based on the D\"orfler criterion~\eqref{eq:intro:doerfler} from~\cite{doerfler1996} with (quasi-) minimal cardinality~\cite{stevenson2007}.

Let $0 < \theta \le 1$, $\Cmark \ge 1$, and $\lambda_{\rm ctr} > 0$ be given. Starting from an initial mesh $\TT_0=\TT_1$ and an initial guess $u_0 \in \XX_0$, our adaptive algorithm iterates the following steps for all $n = 1, 2, 3, \dots$:
\begin{itemize}
\item[\rm(i)] Perform one step of the iterative solver to obtain $u_n := \Phi_n(u_{n-1})$.
\item[\rm(ii)] Compute the refinement indicators $\eta_n(T,u_n)$ for all $T \in \TT_n$.
\item[\rm(iii)] If $\enorm{u_n - u_{n-1}} > \lambda_{\rm ctr} \, \eta_n(u_n)$, then define $\TT_{n+1} := \TT_n$ and continue with Step~{\rm(i)}.
\item[\rm(iv)] Otherwise, choose a set of marked elements $\MM_n \subseteq \TT_n$ which has up to the multiplicative constant $\Cmark$ minimal cardinality and satisfies the D\"orfler marking
\begin{align}\label{eq:intro:doerfler}
 \theta \, \bigg( \sum_{T \in \TT_n} \eta_n(T, u_n)^2 \bigg)^{1/2}
 \le \bigg( \sum_{T \in \MM_n} \eta_n(T, u_n)^2 \bigg)^{1/2} .
\end{align}
\item[\rm(v)] Employ, e.g., newest vertex bisection to generate the coarsest refinement $\TT_{n+1}$ of $\TT_n$ such that at least all marked elements are refined.
\end{itemize}
Note that the index $n \in \N_0$ does not distinguish whether one step of the iterative solver is performed or the mesh is locally refined.

We remark  that the computation of, e.g., all residual error indicators in Step~(ii) as well as as the local mesh-refinement by, e.g., newest vertex bisection can be done at linear costs. The same applies to, e.g., one step of PCG with an optimal additive Schwarz preconditioner in Step~(i). For the D\"orfler marking~\eqref{eq:intro:doerfler} in Step~(iv), we refer to~\cite{stevenson2007} for an algorithm with linear costs and $\Cmark = 2$ as well as to the recent own algorithm~\cite{pp2019} with linear costs and even $\Cmark = 1$.

%%%%%%%%%%%%%%%%%%%%%%%%%%%%%%%%%%%%%%%%%%%%%%%%%%%%%%%%%%%%%%%%%%%%%%%%%%%%%%%%%%%
\subsection{Main results}
%%%%%%%%%%%%%%%%%%%%%%%%%%%%%%%%%%%%%%%%%%%%%%%%%%%%%%%%%%%%%%%%%%%%%%%%%%%%%%%%%%%

Under usual assumptions, we prove that the proposed adaptive algorithm guarantees linear convergence of the \emph{quasi-error} (consisting of error plus error estimator) in the sense of
\begin{align}\label{eq:linear}
 \big( \enorm{u^\star - u_{n+m}} + \eta_{n+m}(u_{n+m}) \big)
 \le \Clin \qlin^n \, \big( \enorm{u^\star - u_m} + \eta_{m}(u_m) \big)
 \quad \text{for all } m,n \in \N_0;
\end{align}
see Theorem~\ref{theorem:linconv} for the precise statement. Moreover, given $N \in \N_0$, let $\T(N)$ be the set of all refinements $\TT$ of $\TT_0$ with $\#\TT - \#\TT_0 \le N$. Then, the algorithm leads to optimal convergence behavior in the following sense: For given $s > 0$, define
\begin{align}\label{eq:As}
 \norm{u^\star}{\A_s} := \sup_{N \in \N_0} (N+1)^s \inf_{\TT_{\rm opt} \in \T(N)} \big( \enorm{u^\star - u_{\rm opt}^\star} + \eta_{\rm opt}(u_{\rm opt}^\star) \big)
 \in \R_{\ge0} \cup \{\infty\}.
\end{align}
Then, there exists a constant $C(s) > 0$ such that
\begin{align}\label{eq:optimal intro}
 \begin{split}
C(s)^{-1} \, \norm{u^\star}{\A_s} 
 &\le \sup_{n \in \N_0} (\#\TT_n-\#\TT_0+1)^s \, \big( \enorm{u^\star - u_n} + \eta_n(u_n) \big)
 \\&
 \le \sup_{n \in \N_0} \bigg( \sum_{m = 0}^n \#\TT_m \bigg)^s \big( \enorm{u^\star - u_n} + \eta_n(u_n) \big)
 \le C(s) \, (\norm{u^\star}{\A_s}+1);
 \end{split}
\end{align}
see Theorem~\ref{theorem:rates} for the precise statement. Some comments are in order to underline the importance of the latter result:

{\large$\color{gray}\bullet$} 
First, by definition~\eqref{eq:As}, it holds that $\norm{u^\star}{\A_s} < \infty$ if and only if the quasi-error (for the exact discrete solutions) converges at least with algebraic rate $s > 0$ along a sequence of optimal meshes. 

{\large$\color{gray}\bullet$} 
Second, if all Steps~{\rm(i)--(v)} of the adaptive algorithm can be performed at linear costs $\OO(\#\TT_n)$, then the sum $\sum_{m = 0}^n \#\TT_m$ is proportional to the overall computational work (resp.\ the overall computational time spent) to perform the $n$-th step of the adaptive loop, since each adaptive step depends on the full adaptive history. 

{\large$\color{gray}\bullet$} 
Third, the interpretation of~\eqref{eq:optimal intro} thus is that the quasi-error for the computed discrete solutions $u_n$ decays with rate $s$ with respect to the overall computational costs (as well as the degrees of freedom) if and only if rate $s$ is possible with respect to the degrees of freedom (for the exact discrete solutions on optimal meshes).

{\large$\color{gray}\bullet$} 
Fourth, since $s > 0$ is arbitrary, the proposed algorithm will asymptotically regain the best possible convergence behavior, even with respect to the computational costs.

{\large$\color{gray}\bullet$} 
Prior works (see, e.g.,~\cite{stevenson2007,bms2010,cg2012,banach}) proved linear convergence of the quasi-error only for those steps, where mesh-refinement takes place. Unlike this, we prove linear convergence~\eqref{eq:linear} for the full sequence of discrete approximations, i.e., independently of the algorithmic decision for mesh-refinement or one step of the discrete solver. Moreover, our proof of~\eqref{eq:optimal intro} shows that \emph{full} linear convergence~\eqref{eq:linear} is the key argument to prove optimal rates with respect to the computational costs. 

{\large$\color{gray}\bullet$} 
In usual applications, the \emph{quasi-error} (i.e., error plus estimator) is equivalent to the so-called \emph{total error} (i.e., error plus data oscillations) as well as to the estimator alone. Therefore, the approximability $\norm{u^\star}{\A_s}$ in~\eqref{eq:As} can equivalently be defined through the total error (see, e.g., \cite{stevenson2007,ckns2008,cn2012,ffp2014}) or the estimator (see, e.g.,~\cite{axioms}) instead of the quasi-error (used in~\eqref{eq:As}). The overall result will be the same. 

%%%%%%%%%%%%%%%%%%%%%%%%%%%%%%%%%%%%%%%%%%%%%%%%%%%%%%%%%%%%%%%%%%%%%%%%%%%%%%%%%%%
\subsection{Outline}
%%%%%%%%%%%%%%%%%%%%%%%%%%%%%%%%%%%%%%%%%%%%%%%%%%%%%%%%%%%%%%%%%%%%%%%%%%%%%%%%%%%

The remainder of this work is organized as follows: First, Section~\ref{section:main_results} formulates the precise assumptions on the model problem (Section~\ref{section:abstract}), the mesh-refine\-ment and the FEM spaces (Section~\ref{section:mesh-refinement}--\ref{section:discrete-spaces}), and the error estimator and the iterative solver (Section~\ref{section:estimator}--\ref{section:solver}). Then, we give a reformulation of the proposed adaptive algorithm in Section~\ref{section:algorithm}, which is more appropriate for the numerical analysis, and formulate our main results in Section~\ref{section:main-results}. Some remarks on the abstract setting are collected in Section~\ref{section:remarks}, before we apply the setting to adaptive FEM with PCG solver for linear PDEs (Section~\ref{section:linear}) and the adaptive algorithm from~\cite{banach} for adaptive FEM for problems with strongly monotone nonlinearity (Section~\ref{section:nonlinear}). Section~\ref{section:reliable} provides the proof of Proposition~\ref{lemma:reliable} that the proposed adaptive strategy inherently allows for reliable {\sl a~posteriori} error control. The proof of Theorem~\ref{theorem:linconv} (full linear convergence) is given in Section~\ref{section:proofs}.  The proof of Theorem~\ref{section:costs} (optimal rates with respect to the computational costs) is given in Section~\ref{section:costs}. Some numerical experiments in Section~\ref{section:numerics} underline our theoretical findings by numerical experiments in 2D and conclude the work.

%%%%%%%%%%%%%%%%%%%%%%%%%%%%%%%%%%%%%%%%%%%%%%%%%%%%%%%%%%%%%%%%%%%%%%%%%%%%%%%%%%%
%%%%%%%%%%%%%%%%%%%%%%%%%%%%%%%%%%%%%%%%%%%%%%%%%%%%%%%%%%%%%%%%%%%%%%%%%%%%%%%%%%%
\section{Main results}
\label{section:main_results}
%%%%%%%%%%%%%%%%%%%%%%%%%%%%%%%%%%%%%%%%%%%%%%%%%%%%%%%%%%%%%%%%%%%%%%%%%%%%%%%%%%%
%%%%%%%%%%%%%%%%%%%%%%%%%%%%%%%%%%%%%%%%%%%%%%%%%%%%%%%%%%%%%%%%%%%%%%%%%%%%%%%%%%%

%%%%%%%%%%%%%%%%%%%%%%%%%%%%%%%%%%%%%%%%%%%%%%%%%%%%%%%%%%%%%%%%%%%%%%%%%%%%%%%%%%%
\subsection{Abstract model problem}
\label{section:abstract}
%%%%%%%%%%%%%%%%%%%%%%%%%%%%%%%%%%%%%%%%%%%%%%%%%%%%%%%%%%%%%%%%%%%%%%%%%%%%%%%%%%%

Let $\HH$ be a Hilbert space over $\K \in \{ \R, \C \}$ with scalar product $\product\cdot\cdot$, corresponding norm $\enorm\cdot$, and dual space $\HH'$ (with norm $\enorm{\cdot}'$).
Let $P: \HH \to \K$ be G\^ateaux differentiable with derivative $\AA := {\rm d}P: \HH \to \HH'$, i.e.,
\begin{align}\label{eq:potential}
 \dual{\AA w}{v}_{\HH'\times\HH} = \lim_{\substack{t \to 0 \\ t \in \R}} \frac{P(w+tv)-P(w)}{t}
 \quad \text{for all } v, w \in \HH.
\end{align}
We suppose that the operator $\AA$ is strongly monotone and Lipschitz continuous, i.e.,
\begin{align}\label{eq:monotone+lipschitz}
 \alpha \, \enorm{w - v}^2 \le {\rm Re} \, \dual{\AA w - \AA v}{w - v}_{\HH'\times\HH}
 \quad \text{and} \quad
 \enorm{\AA w - \AA v}' \le L \, \enorm{w - v}
\end{align}
for all $v, w \in \HH$, where $0 < \alpha \le L$ are given constants. For a linear and continuous functional $F \in \HH'$ and any closed subspace $\XX_\coarse \subseteq \HH$, the main theorem on monotone operators~\cite[Section~25.4]{zeidler} yields existence and uniqueness of the solution $u_\coarse^\star \in \XX_\coarse$ of
\begin{align}
 \dual{\AA u_\coarse^\star}{v_\coarse}_{\HH'\times\HH} = \dual{F}{v_\coarse}_{\HH'\times\HH}
 \quad \text{for all } v_\coarse \in \XX_\coarse. 
\end{align}
In particular, let $u^\star \in \HH$ denote the exact solution on $\HH$.
Moreover, with the energy functional $\EE := {\rm Re}\,(P - F)$, it holds that
\begin{align}\label{eq:energy}
 \frac{\alpha}{2} \, \enorm{u_\coarse^\star - v_\coarse}^2
 \le \EE(v_\coarse) - \EE(u_\coarse^\star) 
 \le \frac{L}{2} \, \enorm{u_\coarse^\star - v_\coarse}^2
 \quad \text{for all } v_\coarse \in \XX_\coarse;
\end{align}
see, e.g.,~\cite[Lemma~5.1]{banach}.
In particular, $u^\star \in \HH$ (resp.\ $u_\coarse^\star \in \XX_\coarse^\star$) is the unique minimizer of the minimization problem
\begin{align}\label{eq:minimization}
 \EE(u^\star) = \min_{v \in \HH} \EE(v)
 \quad \big( \text{resp.} \quad
 \EE(u_\coarse^\star) = \min_{v_\coarse \in \XX_\coarse} \EE(v_\coarse) \big).
\end{align}
As for linear elliptic problems, the present setting guarantees the C\'ea lemma
\begin{align}\label{eq:cea}
 \enorm{u^\star - u_\coarse^\star}
 \le \Ccea \, \enorm{u^\star - v_\coarse}
 \quad \text{for all } v_\coarse \in \XX_\coarse
 \quad \text{with} \quad
 \Ccea := L/\alpha.
\end{align}

\def\Cson{C_{\rm son}}
\def\Cmesh{C_{\rm mesh}}
%%%%%%%%%%%%%%%%%%%%%%%%%%%%%%%%%%%%%%%%%%%%%%%%%%%%%%%%%%%%%%%%%%%%%%%%%%%%%%%%%%%
\subsection{Mesh-refinement}
\label{section:mesh-refinement}
%%%%%%%%%%%%%%%%%%%%%%%%%%%%%%%%%%%%%%%%%%%%%%%%%%%%%%%%%%%%%%%%%%%%%%%%%%%%%%%%%%%

We assume that $\refine(\cdot)$ is a fixed mesh-refinement strategy, e.g., newest vertex bisection~\cite{stevenson2008}.
We write $\TT_\fine = \refine(\TT_\coarse,\MM_\coarse)$ for the coarsest one-level refinement of $\TT_\coarse$, where all marked elements $\MM_\coarse \subseteq \TT_\coarse$ have been refined, i.e., $\MM_\coarse \subseteq \TT_\coarse \backslash \TT_\fine$. We write $\TT_\fine \in \refine(\TT_\coarse)$, if $\TT_\fine$ can be obtained by finitely many steps of one-level refinement (with appropriate, yet arbitrary marked elements in each step). We define $\T := \refine(\TT_0)$ as the set of all meshes which can be generated from the initial mesh $\TT_0$ by use of $\refine(\cdot)$. 
Finally, we associate to each $\TT_\coarse\in\T$ a corresponding finite-dimensional subspace $\XX_\coarse$.

For our analysis, we only employ the following structural properties~\eqref{axiom:sons}--\eqref{axiom:closure}, where $\Cson \ge 2$ and $\Cmesh > 0$ are generic constants:
\renewcommand{\theenumi}{R\arabic{enumi}}
\begin{enumerate}
\bf
\item\label{axiom:sons}
\rm
\textbf{splitting property:} Each refined element is split into finitely many sons, i.e., for all $\TT_\coarse \in \mathbb{T}$ and 
all $\mathcal{M}_\coarse \subseteq \TT_\coarse$, the mesh
$\TT_\fine = \refine(\TT_\coarse, \mathcal{M}_\coarse)$ satisfies that
\begin{align*}
	\# (\TT_\coarse \setminus \TT_\fine) + \# \TT_\coarse \leq \# \TT_\fine
	\leq \Cson \, \# (\TT_\coarse \setminus \TT_\fine) + \# (\TT_\coarse \cap \TT_\fine).
\end{align*} 
\bf
\item\label{axiom:overlay} 
\rm
\textbf{overlay estimate:} For all meshes $\TT \in \mathbb{T}$ and $\TT_\coarse,\TT_\fine \in \refine(\TT)$,
there exists a common refinement $\TT_\coarse \oplus \TT_\fine \in \refine(\TT_\coarse) \cap \refine(\TT_\fine) \subseteq \refine(\TT)$ such that
\begin{align*}
	\# (\TT_\coarse \oplus \TT_\fine) \leq \# \TT_\coarse + \# \TT_\fine - \# \TT.
\end{align*}
\bf
\item\label{axiom:closure} 
\rm
\textbf{mesh-closure estimate:}
 For each sequence $(\TT_\ell)_{\ell \in \N_0}$ of successively refined meshes, i.e., $\TT_{\ell+1} := \refine(\TT_\ell,\MM_\ell)$ with $\MM_\ell \subseteq \TT_\ell$ for all $\ell \in \N_0$, it holds  that	
\begin{align*}
	\# \TT_\ell - \# \TT_0 \leq \Cmesh \sum_{j=0}^{\ell -1} \# \mathcal{M}_j.
\end{align*}%
\end{enumerate}

%%%%%%%%%%%%%%%%%%%%%%%%%%%%%%%%%%%%%%%%%%%%%%%%%%%%%%%%%%%%%%%%%%%%%%%%%%%%%%%%%%%
\subsection{Conforming discrete subspaces}
\label{section:discrete-spaces}
%%%%%%%%%%%%%%%%%%%%%%%%%%%%%%%%%%%%%%%%%%%%%%%%%%%%%%%%%%%%%%%%%%%%%%%%%%%%%%%%%%%

To each $\TT_\coarse\in\T$, we associate a finite-dimensional conforming $\XX_\coarse\subset \HH$. We suppose nestedness in the sense that 
$\XX_\coarse\subseteq\XX_\fine$ for all $\TT_\fine\in\refine(\TT_\coarse)$.

%%%%%%%%%%%%%%%%%%%%%%%%%%%%%%%%%%%%%%%%%%%%%%%%%%%%%%%%%%%%%%%%%%%%%%%%%%%%%%%%%%%
\subsection{Error estimator}
\label{section:estimator}
%%%%%%%%%%%%%%%%%%%%%%%%%%%%%%%%%%%%%%%%%%%%%%%%%%%%%%%%%%%%%%%%%%%%%%%%%%%%%%%%%%%

For each mesh $\TT_\coarse \in \T$, suppose that we can compute refinement indicators 
\begin{align}
 \eta_\coarse(T,v_\coarse) \ge 0
 \quad \text{for all } T \in \TT_\coarse
 \text{ and all } v_\coarse \in \XX_\coarse.
\end{align}
To abbreviate notation, let $\eta_\coarse(v_\coarse) := \eta_\coarse(\TT_\coarse, v_\coarse)$, where 
\begin{align}
 \eta_\coarse(\UU_\coarse, v_\coarse) 
 := \bigg(\sum_{T \in \UU_\coarse} \eta_\coarse(T,v_\coarse)^2 \bigg)^{1/2}
 \quad \text{for all } \UU_\coarse \subseteq \TT_\coarse.
\end{align}
We assume the following \emph{axioms of adaptivity} from~\cite{axioms} for all $\TT_\coarse \in \T$ and all $\TT_\fine \in \refine(\TT_\coarse)$, where $\Cstab, \Crel > 0$ and $0 < \qred < 1$ are generic constants:
\renewcommand{\theenumi}{A\arabic{enumi}}
\begin{enumerate}
\bf
\item stability:\label{axiom:stability}\rm\,
$| \eta_\fine(\UU_\coarse, v_\fine) - \eta_\coarse(\UU_\coarse,w_\coarse) | 
\le \Cstab \enorm{v_\fine - w_\coarse}$
 for all $v_\fine\in\XX_\fine, w_\coarse \in \XX_\coarse$ and all $\UU_\coarse \subseteq \TT_\coarse \cap \TT_\fine$.
\bf
\item reduction:\label{axiom:reduction}\rm\,
 $\eta_\fine(\TT_\fine \backslash \TT_\coarse, v_\coarse) \le \qred \, \eta_\coarse(\TT_\coarse \backslash \TT_\fine, v_\coarse)$ for all $v_\coarse \in \XX_\coarse$.
\bf
\item reliability:\label{axiom:reliability}\rm\, 
$\enorm{u^\star - u_\coarse^\star} \le \Crel \, \eta_\coarse(u_\coarse^\star)$ for the exact discrete solution.
\bf
\item discrete reliability:\label{axiom:discrete_reliability}\rm\, 
$\enorm{u_\fine^\star - u_\coarse^\star} \le \Crel \, \eta_\coarse(\TT_\coarse \backslash \TT_\fine, u_\coarse^\star)$ for the exact discrete solutions.
\end{enumerate}

%%%%%%%%%%%%%%%%%%%%%%%%%%%%%%%%%%%%%%%%%%%%%%%%%%%%%%%%%%%%%%%%%%%%%%%%%%%%%%%%%%%
\subsection{Discrete iterative solver}
\label{section:solver}
%%%%%%%%%%%%%%%%%%%%%%%%%%%%%%%%%%%%%%%%%%%%%%%%%%%%%%%%%%%%%%%%%%%%%%%%%%%%%%%%%%%

For all $\TT_\coarse \in \T$, let $\Phi_\coarse : \XX_\coarse \to \XX_\coarse$ be the iteration function of  one step of the iterative solver. We require one of the following contraction properties with some uniform constant $0 < \qctr < 1$, which is independent of $\TT_\coarse$:

\renewcommand{\theenumi}{C\arabic{enumi}}
\begin{enumerate}
\bf
\item energy contraction:\label{axiom:energy}\rm\,
$\EE(\Phi_\coarse(v_\coarse)) - \EE(u_\coarse^\star)
\le \qctr^2 \, \big( \EE(v_\coarse) - \EE(u_\coarse^\star) \big)$
for all  $v_\coarse \in \XX_\coarse$.
\bf
\item norm contraction:\label{axiom:norm}\rm\,
$\enorm{u_\coarse^\star - \Phi_\coarse(v_\coarse)}
\le \qctr \, \enorm{u_\coarse^\star - v_\coarse}$
for all  $v_\coarse \in \XX_\coarse$.
\end{enumerate}
To formulate the stopping criterion for the iterative solver, let
\begin{align}\label{eq:def:dist}
 \dist(w,v) := \begin{cases}
 |\EE(v)-\EE(w)|^{1/2} \quad& \text{in case of~\eqref{axiom:energy}}, 
 \\
 \enorm{w-v} \quad& \text{in case of~\eqref{axiom:norm}}.
 \end{cases}
\end{align}
Then, the following lemma provides the means to stop the iterative solver.

\begin{lemma}\label{lemma:lambda}
Let $\TT_\coarse \in \T$ and $v_\coarse \in \XX_\coarse$. Then,~\eqref{axiom:energy} or~\eqref{axiom:norm} imply the following estimates:
\begin{itemize}
\item[\rm(i)] $\dist(u_\coarse^\star, \Phi(v_\coarse)) \le \qctr \, \dist(u_\coarse^\star, v_\coarse)$,
\item[\rm(ii)] $\dist(v_\coarse, \Phi(v_\coarse)) \le (1 + \qctr) \, \dist(u_\coarse^\star, v_\coarse)$,
\item[\rm(iii)] $\dist(u_\coarse^\star, v_\coarse) \le (1 - \qctr)^{-1} \, \dist(v_\coarse, \Phi(v_\coarse))$.
\end{itemize}
\end{lemma}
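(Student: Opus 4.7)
The plan is to treat the two contraction assumptions~\eqref{axiom:energy} and~\eqref{axiom:norm} separately, since they lead to somewhat different expressions of $\dist(\cdot,\cdot)$. In both cases, item (i) will be essentially the contraction assumption itself, while (ii) and (iii) follow from (i) by elementary manipulations; (ii) is a one-sided bound in the style of ``the step cannot overshoot the distance to the solution'', and (iii) is its inverse, which is the key estimate for using $\dist(v_\coarse, \Phi_\coarse(v_\coarse))$ as a computable surrogate for the true error $\dist(u_\coarse^\star, v_\coarse)$.

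For the norm contraction case~\eqref{axiom:norm}, item (i) is literally the assumption. For (ii), I insert $u_\coarse^\star$ and use the triangle inequality to obtain $\enorm{v_\coarse - \Phi_\coarse(v_\coarse)} \le \enorm{v_\coarse - u_\coarse^\star} + \enorm{u_\coarse^\star - \Phi_\coarse(v_\coarse)}$, and then bound the second summand by $\qctr \enorm{u_\coarse^\star - v_\coarse}$ via~\eqref{axiom:norm}. For (iii), the same insertion yields $\enorm{u_\coarse^\star - v_\coarse} \le \qctr \enorm{u_\coarse^\star - v_\coarse} + \enorm{\Phi_\coarse(v_\coarse) - v_\coarse}$, which rearranges into the claim since $\qctr<1$.

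For the energy contraction case~\eqref{axiom:energy}, the subtlety is that $\dist(\cdot,\cdot)$ is not a genuine metric, so the triangle inequality is unavailable. The substitute is the fact that $u_\coarse^\star$ is the unique energy minimizer on $\XX_\coarse$ by~\eqref{eq:minimization}, hence $\EE(w_\coarse) - \EE(u_\coarse^\star) \ge 0$ for every $w_\coarse \in \XX_\coarse$, and the absolute value in~\eqref{eq:def:dist} can be dropped whenever the first argument is $u_\coarse^\star$. Item (i) is then obtained by squaring and applying~\eqref{axiom:energy} directly. For (ii) and (iii), the key identity is
\begin{equation*}
 \EE(v_\coarse) - \EE(\Phi_\coarse(v_\coarse))
 = \bigl(\EE(v_\coarse) - \EE(u_\coarse^\star)\bigr) - \bigl(\EE(\Phi_\coarse(v_\coarse)) - \EE(u_\coarse^\star)\bigr),
\end{equation*}
which, together with nonnegativity and~\eqref{axiom:energy}, sandwiches the left-hand side between $(1-\qctr^2)\bigl(\EE(v_\coarse) - \EE(u_\coarse^\star)\bigr)$ and $\bigl(\EE(v_\coarse) - \EE(u_\coarse^\star)\bigr)$. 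Taking square roots, the upper bound yields (ii) (even with the sharper constant $1$ in place of $1+\qctr$), while the lower bound yields (iii) after noting that $(1-\qctr)^2 \le 1-\qctr^2$ for $\qctr\in(0,1)$.

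The main obstacle is really the energy case, where one must justify dropping the absolute values in $\dist$ and absorb the slight slack between $1-\qctr^2$ and $(1-\qctr)^2$; both issues are handled as indicated above. The norm case is essentially the standard triangle-inequality manipulation for contractive fixed-point iterations.
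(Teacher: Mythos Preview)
Your argument is correct in both cases. In the norm case~\eqref{axiom:norm} it coincides with the paper's proof. In the energy case~\eqref{axiom:energy} you take a genuinely different route: you exploit the minimization property~\eqref{eq:minimization} to drop the absolute values and then sandwich $\EE(v_\coarse)-\EE(\Phi_\coarse(v_\coarse))$ between $(1-\qctr^2)$ and $1$ times $\EE(v_\coarse)-\EE(u_\coarse^\star)$, which yields~(ii) with the improved constant~$1$ and~(iii) with the improved constant $(1-\qctr^2)^{-1/2}\le(1-\qctr)^{-1}$. The paper instead observes that $\dist(\cdot,\cdot)=|\EE(\cdot)-\EE(\cdot)|^{1/2}$ \emph{does} satisfy the triangle inequality (since $|a-c|^{1/2}\le|a-b|^{1/2}+|b-c|^{1/2}$ for all real $a,b,c$), so that~(ii) and~(iii) follow from~(i) by exactly the same triangle-inequality computation as in the norm case. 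Thus your claim that ``the triangle inequality is unavailable'' in the energy case is not correct, though your alternative argument is valid and even produces sharper constants; the paper's approach has the advantage of treating both cases uniformly.
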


\begin{proof}
By definition of $\dist(\cdot,\cdot)$, the claim~(i) holds by assumption on~\eqref{axiom:energy} resp.~\eqref{axiom:norm}.
Moreover, (ii)--(iii) follow from the triangle inequality. Note that, if~\eqref{axiom:energy} is valid, 
then $\dist(\cdot,\cdot)$ is a quasi-metric, i.e.,
it holds that $\dist(v_\coarse, v_\coarse) = 0$, $\dist(v_\coarse, w_\coarse) = \dist(w_\coarse, v_\coarse)$, and $\dist(v_\coarse, z_\coarse) \le \dist(v_\coarse, w_\coarse) + \dist(w_\coarse, z_\coarse)$ for all $v_\coarse, w_\coarse, z_\coarse \in \XX_\coarse$. 
\end{proof}

%%%%%%%%%%%%%%%%%%%%%%%%%%%%%%%%%%%%%%%%%%%%%%%%%%%%%%%%%%%%%%%%%%%%%%%%%%%%%%%%%%%
\subsection{Adaptive algorithm}
\label{section:algorithm}
%%%%%%%%%%%%%%%%%%%%%%%%%%%%%%%%%%%%%%%%%%%%%%%%%%%%%%%%%%%%%%%%%%%%%%%%%%%%%%%%%%%

For analytical reasons, we reformulate the adaptive algorithm from the introduction.
More precisely, instead of the single index $n$, we will employ a lower index $\ell$ for the adaptive mesh-refinement as well as an upper index $k$ for the respective steps of the iterative solver.

\begin{algorithm}\label{algorithm}
{\bfseries Input:} Initial mesh $\TT_0$ and $u_0^0 \in \XX_0$, adaptivity parameters $0 < \theta \le 1$, $\lctr > 0$, and $\Cmark \ge 1$, counters $\ell := 0 =: k$.\\
{\bfseries Loop:} Iterate the following Steps~{\rm(i)--(vii)}:
\begin{itemize}
\item[\rm(i)] Update counter $(\ell,k)\mapsto(\ell,k+1)$.
\item[\rm(ii)] Do one step of the iterative solver to obtain $u_\ell^k:=\Phi_\ell(u_\ell^{k-1})$.
\item[\rm(iii)] Compute the local contributions $\eta_\ell(T,u_\ell^k)$ of the error estimator for all $T\in\TT_\ell$.
\item[\rm(iv)] If $\dist(u_\ell^k, u_\ell^{k-1})>\lctr\,\eta_\ell(u_\ell^k)$, continue with {\rm (i)}.
\item[\rm(v)] Otherwise, define $\k(\ell):=k$ and determine a set $\MM_\ell\subseteq\TT_\ell$ with up to the multiplicative factor $\Cmark$ minimal cardinality such that $\theta\,\eta_\ell(u_\ell^k)\leq \eta_\ell(\MM_\ell,u_\ell^k)$.
\item[\rm(vi)] Generate $\TT_{\ell+1}:=\refine(\TT_\ell,\MM_\ell)$ and define $u_{\ell+1}^0:= u_\ell^{\k(\ell)}$.
\item[\rm(vii)] Update counter $(\ell,k)\mapsto(\ell+1,0)$ and continue with {\rm (i)}.
\end{itemize}
{\bfseries Output:} Sequences of successively refined triangulations $\TT_\ell$, discrete solutions $u_\ell^k$, and corresponding error estimators $\eta_\ell(u_\ell^k)$, for all $\ell \geq 0$ and $k\geq 0$.
\end{algorithm}

Define the index set 
$$
\QQ := \set{(\ell,k) \in \N_0^2}{\text{index pair $(\ell,k)$ is used in Algorithm~\ref{algorithm}} \text{ and } k < \k(\ell)}.
$$
Since $u_{\ell + 1}^0 = u_\ell^{\k(\ell)}\!$, we exclude $(\ell,\k(\ell))$ from $\QQ$, if $(\ell + 1, 0) \in \QQ$.
Since Algorithm~\ref{algorithm} is sequential, the index set $\QQ$ is naturally ordered. For $(\ell,k), (\ell',k') \in \QQ$,  we write 
\begin{align}
 (\ell',k') < (\ell,k)
 \quad \stackrel{\text{def}}{\Longleftrightarrow} \quad
 (\ell',k') \text{ appears earlier in Algorithm~\ref{algorithm} than } (\ell,k).
\end{align}
With this order, we can define the \emph{total step counter}
\begin{align*}
 |(\ell,k)| := \# \set{(\ell',k') \in \QQ}{(\ell',k') < (\ell,k)}=k+\sum_{\ell'=0}^{\ell-1} \k(\ell'),
\end{align*}
which provides the total number of solver steps up to the computation of $u_\ell^k$.
Then, provided that~\eqref{axiom:norm} holds, Algorithm~\ref{algorithm} and the adaptive algorithm from the introduction are related through 
$\widetilde\TT_n = \TT_\ell$ and $\widetilde u_n = u_\ell^k$, where $n = |(\ell,k)|$ and quantities with tilde (i.e., $\widetilde\TT_n$ and $\widetilde u_n$) belong to the algorithm from the introduction.

To abbreviate notation, we make the following convention: If the mesh index $\ell \in \N_0$ is clear from the context, we simply write $\k := \k(\ell)$, e.g., $u_\ell^\k := u_\ell^{\k(\ell)}$.
Finally, we introduce some further notation: Define $\underline\ell := \sup\set{\ell \in \N_0}{(\ell,0) \in \QQ}$. Generically, it holds that $\underline\ell = \infty$, i.e., infinitely many steps of mesh-refinement occur. Moreover, for $(\ell,0) \in \QQ$, define $\k(\ell) := \sup\set{k \in \N_0}{(\ell,k) \in \QQ} +1$. We note that the latter definition is consistent with that of Algorithm~\ref{algorithm}, but additionally defines $\k(\underline\ell) = \infty$ if $\underline\ell < \infty$.

%%%%%%%%%%%%%%%%%%%%%%%%%%%%%%%%%%%%%%%%%%%%%%%%%%%%%%%%%%%%%%%%%%%%%%%%%%%%%%%%%%%
\subsection{Abstract main results}
\label{section:main-results}
%%%%%%%%%%%%%%%%%%%%%%%%%%%%%%%%%%%%%%%%%%%%%%%%%%%%%%%%%%%%%%%%%%%%%%%%%%%%%%%%%%%

This section states our main results in the abstract framework of Section~\ref{section:abstract}. We stress that the analysis relies only on the assumptions~\eqref{axiom:sons}--\eqref{axiom:closure} on the mesh-refinement, \eqref{axiom:stability}--\eqref{axiom:discrete_reliability} on the error estimator, and~\eqref{axiom:energy} resp.\ \eqref{axiom:norm} on the iterative solver. We refer to Section~\ref{section:linear} and Section~\ref{section:nonlinear} below, where these assumptions are verified for concrete model problems.

First, we note that due to the contraction property~\eqref{axiom:energy} resp.\ \eqref{axiom:norm}, we have {\sl a~posteriori} error control of the error.
The proof is given in Section~\ref{section:reliable}.

\begin{proposition}\label{lemma:reliable}
Suppose~\eqref{axiom:energy} or~\eqref{axiom:norm}. Suppose~\eqref{axiom:stability}--\eqref{axiom:reliability}. Then, the quasi-error
\begin{align}\label{eq:quasi-error}
 \Delta_\ell^k:=\enorm{u^\star-u_\ell^k}+\eta_\ell(u_\ell^k)
 \quad\text{for all } (\ell,k) \in \overline\QQ := \QQ \cup \set{(\ell,\k)}{\k(\ell) < \infty}
\end{align}
satisfies that
\begin{align}\label{eq:lemma:reliable}
 \Delta_\ell^k 
 \le \Crel' \begin{cases}
  \eta_\ell(u_\ell^k) + \dist(u_\ell^k, u_\ell^{k-1}) \quad & \text{if \ }0 < k \le \k(\ell), \\
  \eta_\ell(u_\ell^\k) \quad & \text{if \ } k = \k(\ell), \\
  \eta_{\ell-1}(u_\ell^{0}) \quad & \text{if \ } k = 0 \text{ and }\ell > 0.
 \end{cases}
\end{align}
The constant $\Crel' > 0$ depends only on $\Cstab$, $\Crel$, $\qctr$, and $\lctr$ under~\eqref{axiom:norm}, while it additionally depends on $\alpha$ under~\eqref{axiom:energy}.
\end{proposition}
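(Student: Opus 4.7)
\smallskip

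\textbf{Proof plan.} I would organise the three cases hierarchically. Case~1 is the heart of the argument; Case~2 is then a one-line consequence obtained from the fact that the stopping criterion in Step~(iv) has just become satisfied, so that $\dist(u_\ell^\k, u_\ell^{\k-1}) \le \lctr\,\eta_\ell(u_\ell^\k)$; and Case~3 reduces to Case~2 at level $\ell-1$ through the identity $u_\ell^0 = u_{\ell-1}^{\k(\ell-1)}$ from Step~(vi).

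For Case~1, starting from the triangle inequality
\begin{align*}
 \Delta_\ell^k \le \enorm{u^\star - u_\ell^\star} + \enorm{u_\ell^\star - u_\ell^k} + \eta_\ell(u_\ell^k),
\end{align*}
I would use reliability~\eqref{axiom:reliability} to bound the first summand by $\Crel\,\eta_\ell(u_\ell^\star)$ and then stability~\eqref{axiom:stability} on $\UU_\coarse = \TT_\ell$ to estimate $\eta_\ell(u_\ell^\star) \le \eta_\ell(u_\ell^k) + \Cstab\,\enorm{u_\ell^\star - u_\ell^k}$. This reduces matters to bounding the \emph{solver error} $\enorm{u_\ell^\star - u_\ell^k}$ in terms of $\dist(u_\ell^k, u_\ell^{k-1})$. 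Under~\eqref{axiom:norm} this is exactly the content of Lemma~\ref{lemma:lambda}(i) combined with Lemma~\ref{lemma:lambda}(iii), since $\dist(\cdot,\cdot) = \enorm{\cdot-\cdot}$. Under~\eqref{axiom:energy} I would additionally pay a factor $\sqrt{2/\alpha}$ to convert the energy quasi-metric back to the norm via the lower bound in~\eqref{eq:energy}, which is precisely where the advertised extra dependence on $\alpha$ enters the constant $\Crel'$.

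For Case~3 I would exploit $u_\ell^0 = u_{\ell-1}^{\k(\ell-1)} \in \XX_{\ell-1}$ to rewrite $\enorm{u^\star - u_\ell^0} = \enorm{u^\star - u_{\ell-1}^\k}$, which is already controlled by Case~2 at level $\ell-1$. For the estimator part, I would split $\TT_\ell = (\TT_\ell\cap\TT_{\ell-1}) \cup (\TT_\ell\setminus\TT_{\ell-1})$, apply stability~\eqref{axiom:stability} on $\UU_\coarse=\TT_\ell\cap\TT_{\ell-1}$ with $v_\fine=u_\ell^0=u_{\ell-1}^\k=w_\coarse$ (so the discrepancy vanishes exactly), and apply reduction~\eqref{axiom:reduction} on $\TT_\ell\setminus\TT_{\ell-1}$; summing the two squared contributions yields $\eta_\ell(u_\ell^0) \le \eta_{\ell-1}(u_{\ell-1}^\k) = \eta_{\ell-1}(u_\ell^0)$, after which Case~2 at level $\ell-1$ closes the argument (up to enlarging $\Crel'$ by an additive constant).

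The only real obstacle I anticipate is keeping the two contraction regimes aligned in a single proof; the abstract quasi-metric $\dist(\cdot,\cdot)$ from~\eqref{eq:def:dist} is purpose-built so that Lemma~\ref{lemma:lambda} produces identical algebra under~\eqref{axiom:energy} and under~\eqref{axiom:norm}, and $\alpha$ enters $\Crel'$ only through the single application of~\eqref{eq:energy} at the end of Case~1.
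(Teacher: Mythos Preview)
Your proposal is correct and follows essentially the same route as the paper's proof: triangle inequality plus~\eqref{axiom:reliability} and~\eqref{axiom:stability} to reduce Case~1 to the solver error, then Lemma~\ref{lemma:lambda}(i)\&(iii) (with the extra $\sqrt{2/\alpha}$ from~\eqref{eq:energy} under~\eqref{axiom:energy}); the stopping criterion for Case~2; and nested iteration $u_\ell^0=u_{\ell-1}^\k$ together with~\eqref{axiom:stability}--\eqref{axiom:reduction} to obtain $\eta_\ell(u_\ell^0)\le\eta_{\ell-1}(u_\ell^0)$ for Case~3. The organisation and the tracking of the $\alpha$-dependence match the paper exactly.
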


The first main theorem states linear convergence of the quasi-error. We note that under certain assumptions, linear convergence holds for arbitrary parameters $0 < \theta \le 1$ and $\lctr > 0$.
The proof is given in Section~\ref{section:proofs}.

\begin{theorem}\label{theorem:linconv}
Suppose~\eqref{axiom:energy} or~\eqref{axiom:norm}. Suppose~\eqref{axiom:stability}--\eqref{axiom:reliability}.
Define
\begin{align}\label{def:lambda}
 \lconv := \begin{cases} 
 \infty &\text{if~\eqref{axiom:energy} is valid},\\
 \frac{1-\qctr}{\Cstab \qctr} \quad &\text{otherwise}.
 \end{cases}
\end{align}
Then, for all $0 < \theta \leq 1$ and $0 < \lctr < \lconv \theta$, there exist constants $\Clin \geq 1$ and $0 < \qlin < 1$ such that the quasi-error~\eqref{eq:quasi-error}
is linearly convergent in the sense of
\begin{align}\label{eq:theorem:linconv}
 \Delta_{\ell}^{k} \leq \Clin \, \qlin^{|(\ell,k)|-|(\ell',k')|}\,\Delta_{\ell'}^{k'}
 \qquad\textrm{ for all }
 (\ell, k), (\ell', k') \in \QQ \textrm{ with } (\ell', k') < (\ell, k).
\end{align}
The constants $\Clin$ and $\qlin$ depend only on $\Ccea=L/\alpha$, $\Cstab$, $\qred$, $\Crel$, $\qctr$, and the adaptivity parameters $\theta$ and $\lctr$, while it additionally depends on $L$ in case of \eqref{axiom:energy}.
\end{theorem}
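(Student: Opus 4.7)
The plan is to adapt the classical two-stage strategy behind linear convergence of adaptive FEM with inexact solvers -- solver contraction inside each inner loop of a fixed mesh $\ell$ combined with estimator/error contraction across mesh refinement -- and to upgrade it so that every single index $(\ell,k)\in\QQ$ is covered rather than only the refinement steps. The upgrade boils down to proving tail-summability of the quasi-error along the $\QQ$-ordered sequence, from which~\eqref{eq:theorem:linconv} is a standard consequence (see, e.g.,~\cite{axioms}).

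First, from~\eqref{axiom:energy} or~\eqref{axiom:norm} together with Lemma~\ref{lemma:lambda}, within each inner loop the algebraic error contracts, $\dist(u_\ell^\star,u_\ell^k)\le\qctr\,\dist(u_\ell^\star,u_\ell^{k-1})$, and at the stopping index $k=\k(\ell)$ the criterion in Step~(iv) together with Lemma~\ref{lemma:lambda}(iii) yields $\dist(u_\ell^\star,u_\ell^\k)\le\qctr\lctr/(1-\qctr)\,\eta_\ell(u_\ell^\k)$. In the energy case, \eqref{eq:energy} makes $\dist$ equivalent to $\enorm{\cdot}$ up to $\alpha$ and $L$, so~\eqref{axiom:energy} and~\eqref{axiom:norm} are handled uniformly. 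Via~\eqref{axiom:stability}, these bounds let me shuttle between $\eta_\ell(u_\ell^k)$ and $\eta_\ell(u_\ell^\star)$ at a cost proportional to $\dist(u_\ell^\star,u_\ell^k)$.

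Second, I would prove the exact-discrete contraction $\Lambda_{\ell+1}\le q^2\,\Lambda_\ell$ for the inflated quantity $\Lambda_\ell := \enorm{u^\star-u_\ell^\star}^2 + \gamma\,\eta_\ell(u_\ell^\star)^2$ with a suitable small $\gamma>0$ and some $0<q<1$. The ingredients are D\"orfler marking~\eqref{eq:intro:doerfler}, which is formulated for $u_\ell^\k$ and must be transferred to $u_\ell^\star$ at cost $\Cstab\,\enorm{u_\ell^\star-u_\ell^\k}$ via~\eqref{axiom:stability}, together with the estimator reduction built from~\eqref{axiom:stability}--\eqref{axiom:reduction}, discrete reliability~\eqref{axiom:discrete_reliability}, C\'ea~\eqref{eq:cea}, and, where applicable, the two-sided energy bound~\eqref{eq:energy}. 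The restriction $\lctr<\lconv\theta$ is exactly what is needed to keep the inexact-solver perturbation, which by Step~1 is of order $\lctr\,\eta_\ell(u_\ell^\k)$, strictly below the D\"orfler slack, so that strict contraction survives.

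Third, I would translate the mesh-level contraction of $\Lambda_\ell$ into step-level control of $\Delta_\ell^k$ on the full $\QQ$-ordered sequence. Proposition~\ref{lemma:reliable} and the bounds of Step~1 yield an equivalence (up to constants depending only on those listed in the theorem)
\[
 \Delta_\ell^k \;\simeq\; \Lambda_\ell^{1/2} + \qctr^{k}\,\dist(u_\ell^\star,u_\ell^0)
 \qquad\text{for all } (\ell,k)\in\overline\QQ.
\]
Summing over $k=0,\dots,\k(\ell)-1$ gives a geometric series bounded by $\Lambda_\ell^{1/2}$, independently of the possibly unbounded length $\k(\ell)$, and summing then over $\ell\ge\ell_0$ yields a further geometric series thanks to Step~2. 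The resulting tail-summability $\sum_{(\ell',k')\ge(\ell,k)}\Delta_{\ell'}^{k'}\le C\,\Delta_\ell^{k}$ then produces~\eqref{eq:theorem:linconv} with explicit $\Clin,\qlin$ depending on the advertised constants.

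The main obstacle will be Step~3: uniformly treating, within one inner loop, the two regimes \emph{algebraic error dominant} versus \emph{estimator/discretization error dominant}, and cleanly handling the boundary indices $(\ell,0)$ (where the iterate is inherited from mesh $\ell-1$, so the solver bookkeeping of Step~1 has to be initialized accordingly) as well as $(\ell,\k(\ell))$ (where the stopping criterion replaces the direct use of $\dist(u_\ell^k,u_\ell^{k-1})$). The single interplay between $\lctr,\theta,\qctr,\Cstab$ enforced by the hypothesis $\lctr<\lconv\theta$ is exactly what makes both the contraction in Step~2 and the geometric summation in Step~3 go through simultaneously.
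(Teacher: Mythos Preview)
Your overall architecture---solver contraction inside a mesh level plus estimator/error contraction across levels, merged into tail-summability---matches the paper's proof under~\eqref{axiom:norm} rather closely. The substantive difficulty is the~\eqref{axiom:energy} case, where your Step~2 has a genuine gap.

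\textbf{The gap in Step~2 under~\eqref{axiom:energy}.} You propose to prove contraction of $\Lambda_\ell=\enorm{u^\star-u_\ell^\star}^2+\gamma\,\eta_\ell(u_\ell^\star)^2$ by transferring D\"orfler marking from $u_\ell^\k$ to $u_\ell^\star$ at cost $\Cstab\,\enorm{u_\ell^\star-u_\ell^\k}$. From Step~1 and~\eqref{eq:energy} this cost is of order $\sqrt{2/\alpha}\,\frac{\qctr}{1-\qctr}\,\Cstab\,\lctr\,\eta_\ell(u_\ell^\k)$, so strict contraction of $\Lambda_\ell$ still forces a \emph{finite} upper bound on $\lctr$ in terms of $\theta,\alpha,\qctr,\Cstab$. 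But under~\eqref{axiom:energy} the theorem sets $\lconv=\infty$ and claims linear convergence for \emph{every} $\lctr>0$; your argument cannot deliver this. The paper avoids the transfer entirely in the~\eqref{axiom:energy} case: it works with the Lyapunov quantity $\Lambda_\ell^k:=\dist(u^\star,u_\ell^k)^2+\mu\,\eta_\ell(u_\ell^k)^2$ built from the \emph{iterates}, and proves step-by-step contraction $\Lambda_\ell^{k+1}\le\qlin^2\Lambda_\ell^k$ and $\Lambda_{\ell+1}^0\le\qlin^2\Lambda_\ell^{\k-1}$. The key ingredients that make this work without any bound on $\lctr$ are the energy Pythagoras identity $\dist(u^\star,u_\ell^k)^2=\dist(u^\star,u_\ell^\star)^2+\dist(u_\ell^\star,u_\ell^k)^2$ and the fact that for $k<\k(\ell)$ the \emph{not-yet-stopped} criterion gives $\eta_\ell(u_\ell^{k+1})\le\lctr^{-1}\dist(u_\ell^{k+1},u_\ell^k)\lesssim\dist(u_\ell^\star,u_\ell^k)$, so the estimator term is absorbed into the algebraic error rather than the other way around. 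D\"orfler marking is then applied directly to $u_\ell^\k$ (no transfer), and the free parameter $\mu$ is chosen small enough to compensate for whatever $\lctr$ happens to be.

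\textbf{A smaller issue in Step~3.} Your displayed equivalence $\Delta_\ell^k\simeq\Lambda_\ell^{1/2}+\qctr^k\,\dist(u_\ell^\star,u_\ell^0)$ is too weak for the summation you announce: summing the constant term $\Lambda_\ell^{1/2}$ over $k=0,\dots,\k(\ell)-1$ produces a factor $\k(\ell)$, which is unbounded. What actually gives a geometric series is the sharper bound $\Delta_\ell^k\lesssim\enorm{u_\ell^\star-u_\ell^{k-1}}$ for $0<k<\k(\ell)$, which follows from the not-stopped criterion~\eqref{eq2:stopping:linearization} together with~\eqref{axiom:stability}--\eqref{axiom:reliability}; this is exactly the estimate~\eqref{eq3:ell-k} in the paper's~\eqref{axiom:norm} proof. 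Finally, note that the theorem assumes only~\eqref{axiom:stability}--\eqref{axiom:reliability}; discrete reliability~\eqref{axiom:discrete_reliability} is not available here and should be dropped from your Step~2.
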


\begin{remark}
Theorem~\ref{theorem:linconv} is remarkably stronger than the corresponding results in~\cite{banach}:

{\rm(i)} \cite[Theorem~5.3]{banach} assumes that $\k(\ell) < \infty$ for all $\ell \in \N_0$ and then proves linear convergence only for the final iterates, i.e., $\eta_{\ell+m}(u_{\ell+m}^\k) \lesssim \qlin^m \, \eta_\ell(u_\ell^\k)$ for all $\ell, m \in \N_0$. If $\k(\ell) = \infty$ for some $\ell \in \N_0$, then~\cite[Proposition~4.4]{banach} proves only plain convergence $\Delta_\ell^k \to 0$ as $k \to \infty$. Instead, the present Theorem~\ref{theorem:linconv} states linear convergence of $\Delta_\ell^k$ in any case as $|(\ell,k)| \to \infty$.

{\rm(ii)} Moreover, \cite[Theorem~5.3]{banach} is always constrained by $\lconv = \frac{1-\qctr}{\Cstab \qctr}$, while the present Theorem~\ref{theorem:linconv} allows even for $\lconv = \infty$ in certain (relevant) situations.
\end{remark}

The following corollary states that the exact solution $u^\star$ is discrete if $\underline\ell<\infty$, i.e., if the number of mesh refinements is bounded.

\begin{corollary}\label{cor:linconv}
Suppose the assumptions of Theorem~\ref{theorem:linconv}.
Then, $\underline\ell < \infty$ implies that $u^\star = u_{\underline\ell}^\star$ and $\eta_{\underline\ell}(u_{\underline\ell}^\star) = 0$.
\end{corollary}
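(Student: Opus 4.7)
The plan is to deduce both assertions from Theorem~\ref{theorem:linconv} applied along the terminal level $\underline\ell$. Since $\underline\ell < \infty$ means no further mesh refinement is ever triggered, the convention $\k(\underline\ell) := \sup\{k : (\underline\ell,k)\in\QQ\}+1 = \infty$ gives us that $(\underline\ell,k) \in \QQ$ for every $k \in \N_0$, and the total step counter $|(\underline\ell,k)|$ tends to infinity as $k\to\infty$.

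Fix any index $(\ell',k') \in \QQ$ with $(\ell',k') < (\underline\ell,0)$ (e.g.\ $(\ell',k')=(0,0)$ if $\underline\ell\ge 1$, else $(0,0)<(0,k)$ inside level $0$). Theorem~\ref{theorem:linconv} yields
\begin{equation*}
\Delta_{\underline\ell}^{k} \;\le\; \Clin\,\qlin^{|(\underline\ell,k)| - |(\ell',k')|}\,\Delta_{\ell'}^{k'} \xrightarrow[k\to\infty]{} 0,
\end{equation*}
so by definition of $\Delta_{\underline\ell}^k$ we obtain both $\enorm{u^\star - u_{\underline\ell}^k} \to 0$ and $\eta_{\underline\ell}(u_{\underline\ell}^k) \to 0$ as $k\to\infty$.

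Next I would identify the limit of $u_{\underline\ell}^k$ on the fixed space $\XX_{\underline\ell}$. Under~\eqref{axiom:norm}, Lemma~\ref{lemma:lambda}(i) (or a direct iteration of the contraction) gives $\enorm{u_{\underline\ell}^\star - u_{\underline\ell}^k}\le \qctr^k\,\enorm{u_{\underline\ell}^\star - u_{\underline\ell}^0} \to 0$; under~\eqref{axiom:energy}, the equivalence~\eqref{eq:energy} of energy difference and norm yields the same conclusion. Combining with $u_{\underline\ell}^k \to u^\star$ in $\HH$ (from the previous step), uniqueness of limits forces $u^\star = u_{\underline\ell}^\star$.

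Finally, $\eta_{\underline\ell}(u_{\underline\ell}^\star) = 0$ follows from stability~\eqref{axiom:stability}: applying it with $\UU_\coarse = \TT_{\underline\ell}$ and $v_\fine = u_{\underline\ell}^k$, $w_\coarse = u_{\underline\ell}^\star$ (which is legal because $\TT_\fine = \TT_{\underline\ell}$ here) gives
\begin{equation*}
 |\eta_{\underline\ell}(u_{\underline\ell}^\star) - \eta_{\underline\ell}(u_{\underline\ell}^k)|
 \le \Cstab\,\enorm{u_{\underline\ell}^k - u_{\underline\ell}^\star} \xrightarrow[k\to\infty]{} 0,
\end{equation*}
and since $\eta_{\underline\ell}(u_{\underline\ell}^k)\to 0$ we conclude $\eta_{\underline\ell}(u_{\underline\ell}^\star)=0$. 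There is no real obstacle here; the only point to be slightly careful about is the bookkeeping that $\k(\underline\ell)=\infty$ indeed places $(\underline\ell,k)$ in $\QQ$ for all $k$, so that Theorem~\ref{theorem:linconv} is legitimately applicable along the entire tail of solver iterates on the terminal mesh.
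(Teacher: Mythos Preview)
Your proof is correct and follows essentially the same route as the paper: apply Theorem~\ref{theorem:linconv} along the terminal level to get $\Delta_{\underline\ell}^k\to 0$, use the solver contraction (with~\eqref{eq:energy} in the~\eqref{axiom:energy} case) to obtain $u_{\underline\ell}^k\to u_{\underline\ell}^\star$, conclude $u^\star=u_{\underline\ell}^\star$ by uniqueness of limits, and finish with stability~\eqref{axiom:stability}. Your extra bookkeeping that $\k(\underline\ell)=\infty$ forces $(\underline\ell,k)\in\QQ$ for all $k$ is a welcome clarification; the slightly awkward case split for choosing $(\ell',k')$ can be avoided by simply taking $(\ell',k')=(0,0)$ and $k\ge 1$, which always satisfies $(0,0)<(\underline\ell,k)$.
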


\begin{proof}
According to Theorem~\ref{theorem:linconv}, it holds that 
$$ 
 \enorm{u^\star - u_{\underline\ell}^k} + \eta_{\underline\ell}(u_{\underline\ell}^k) = \Delta_{\underline\ell}^k \to 0
 \quad \text{as } k \to \infty.
$$
Moreover, contraction~\eqref{axiom:energy} or \eqref{axiom:norm}  (together with  \eqref{eq:energy} in case of  \eqref{axiom:energy}) prove that 
$$
 \enorm{u_{\underline\ell}^\star - u_{\underline\ell}^k} 
 \simeq \dist(u_{\underline\ell}^\star, u_{\underline\ell}^k)
 \le \qctr^k \, \dist(u_{\underline\ell}^\star, u_{\underline\ell}^0)
 \to 0
 \quad \text{as } k \to \infty.
$$
Uniqueness of the limit yields that $u_{\underline\ell}^\star = u^\star$. Moreover, it follows that
$$
 0 \le \eta_{\underline\ell}(u_{\underline\ell}^\star) 
 \reff{axiom:stability} \le \eta_{\underline\ell}(u_{\underline\ell}^k) + \enorm{u_{\underline\ell}^\star - u_{\underline\ell}^k} \to 0 
  \quad \text{as } k \to \infty.
$$
This concludes the proof.
\end{proof}

The second main theorem states optimal convergence rates of the quasi-error \eqref{eq:quasi-error} with respect to the overall computational costs. As usual in this context (see, e.g.,~\cite{axioms}), the result requires that the adaptivity parameters $0 < \theta \le 1$ and $\lctr > 0$ are sufficiently small. 
The proof is given in Section~\ref{section:costs}.

\begin{theorem}\label{theorem:rates}
Suppose~\eqref{axiom:energy} or~\eqref{axiom:norm}. Suppose~\eqref{axiom:sons}--\eqref{axiom:closure} and~\eqref{axiom:stability}--\eqref{axiom:discrete_reliability}. Define
\begin{align}\label{def:lambda_opt}
 \lopt := \begin{cases} 
 \frac{1 - \qctr}{\qctr \Cstab} &\text{if~\eqref{axiom:norm} is valid},\\
 \frac{1 - \qctr}{\qctr \Cstab}\,\sqrt{\alpha/2} \quad &\text{otherwise}.
 \end{cases}
\end{align}
Let $0 < \theta \leq 1$ and $0 < \lctr < \lopt \theta$ such that
\begin{align}\label{eq:opt:theta'}
 0 < \theta' := \frac{\theta + \lctr/\lopt}{1 - \lctr/\lopt} < (1 + \Cstab^2 \Crel^2)^{-1/2}.
\end{align}
Let $s > 0$. 
Then, there exist $\copt, \Copt > 0$ such that
\begin{align}\label{eq:optimal}
\begin{split}
 \copt^{-1} \, \norm{u^\star}{\mathbb{A}_s} 
 &\le  \sup_{(\ell',k') \in \QQ} (\#\TT_{\ell'} - \#\TT_0 + 1)^s \, \Delta_{\ell'}^{k'}
 \\ &
 \le \sup_{(\ell',k') \in \QQ} \bigg(\sum_{\substack{(\ell,k) \in \QQ \\ (\ell,k) \le (\ell',k')}} \#\TT_\ell \bigg)^s \, \Delta_{\ell'}^{k'}
 \le \Copt \, \max\{\norm{u^\star}{\mathbb{A}_s},\Delta_0^0\},
\end{split}
\end{align}
where $\norm{u^\star}{\mathbb{A}_s}$ is defined in~\eqref{eq:As}.
The constant $\copt > 0$ depends only on $\Ccea=L/\alpha$, $\Cson$, $\Cstab$, $\Crel$, $\#\TT_0$, and $s$, and additionally on $\underline\ell$ resp.\ $\ell_0$, if $\underline\ell<\infty$ or $\eta_{\ell_0}(u_{\ell_0}^{\underline k})=0$ for some $(\ell_0+1,0)\in\QQ$.
The constant $\Copt > 0$ depends only on $\Cstab$, $\qred$, $\Crel$, $\Cmesh$, $1 - \lctr/\lopt$, $\Cmark$, $\Crel'$, $\Clin$, $\qlin$, $\#\TT_0$, and~$s$.
\end{theorem}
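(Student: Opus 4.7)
My plan is to treat the three inequalities in~\eqref{eq:optimal} separately. The middle estimate is trivial, since $\#\TT_{\ell'} - \#\TT_0 + 1 \le \#\TT_{\ell'}$ is already one summand of $\sum_{(\ell,k) \le (\ell',k')} \#\TT_\ell$ (up to absorbing $\#\TT_0$). For the lower bound, I proceed by direct comparison: given $N \in \N_0$, I pick the latest $(\ell',k') \in \QQ$ with $\#\TT_{\ell'} - \#\TT_0 \le N$, invoke the infimum-definition of $\norm{u^\star}{\A_s}$ at $\TT_{\ell'} \in \T(N)$, and pass from the exact-discrete quasi-error $\enorm{u^\star - u_{\ell'}^\star} + \eta_{\ell'}(u_{\ell'}^\star)$ to the computed $\Delta_{\ell'}^{k'}$ via the triangle inequality and stability~\eqref{axiom:stability}, controlling the solver error $\enorm{u_{\ell'}^\star - u_{\ell'}^{k'}}$ through the stopping criterion and Lemma~\ref{lemma:lambda} (for $k' = \k(\ell')$) or through iterated contraction (for $k' < \k(\ell')$). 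The degenerate situations $\underline\ell < \infty$ or $\eta_{\ell_0}(u_{\ell_0}^\k) = 0$ are handled by Corollary~\ref{cor:linconv} (the exact solution is already discrete) and absorbed into $\copt$.

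For the upper bound, I first establish quasi-optimal cardinality of each marked set. Whenever Step~(v) of Algorithm~\ref{algorithm} fires, the stopping criterion $\dist(u_\ell^{\k}, u_\ell^{\k-1}) \le \lctr\, \eta_\ell(u_\ell^{\k})$ combined with Lemma~\ref{lemma:lambda} yields $\enorm{u_\ell^\star - u_\ell^{\k}} \le (\lctr/\lopt)\, \eta_\ell(u_\ell^{\k})$ (using~\eqref{eq:energy} in the energy-contraction case to convert $\dist$ back to the norm). Plugging this into stability~\eqref{axiom:stability} transfers the D\"orfler marking from $u_\ell^{\k}$ to the exact Galerkin solution $u_\ell^\star$ with the inflated parameter $\theta'$ from~\eqref{eq:opt:theta'}; the assumption $\theta' < (1+\Cstab^2\Crel^2)^{-1/2}$ then unlocks the standard comparison lemma of~\cite{axioms}, giving $\#\MM_\ell \lesssim \norm{u^\star}{\A_s}^{1/s}\, \eta_\ell(u_\ell^{\k})^{-1/s}$. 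Combining this with mesh-closure~\eqref{axiom:closure}, Proposition~\ref{lemma:reliable} (to replace $\eta_\ell(u_\ell^\k)$ by the equivalent $\Delta_{\ell+1}^0$, which lies in $\QQ$), and full linear convergence (Theorem~\ref{theorem:linconv}) turns the tail $\sum_{j<\ell} \eta_j(u_j^{\k})^{-1/s}$ into a geometric series in $\qlin^{1/s}$. After telescoping, this delivers the \emph{per-index bound}
\[
  \#\TT_\ell \lesssim M^{1/s}\, (\Delta_\ell^k)^{-1/s} \qquad \text{for every } (\ell,k)\in\QQ,
\]
with $M := \max\{\norm{u^\star}{\A_s}, \Delta_0^0\}$.

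The overall cost bound then follows by one more application of full linear convergence. Using $(\Delta_{\ell'}^{k'})^{1/s} \le \Clin^{1/s}\, \qlin^{(|(\ell',k')|-|(\ell,k)|)/s}\,(\Delta_\ell^k)^{1/s}$ from Theorem~\ref{theorem:linconv} and the per-index bound, I obtain
\[
  \sum_{(\ell,k) \le (\ell',k')} \#\TT_\ell \cdot (\Delta_{\ell'}^{k'})^{1/s}
  \lesssim M^{1/s} \sum_{(\ell,k) \le (\ell',k')} \qlin^{(|(\ell',k')|-|(\ell,k)|)/s}
  \lesssim \frac{M^{1/s}}{1-\qlin^{1/s}},
\]
since consecutive indices in $\QQ$ differ by exactly one in $|\cdot|$. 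Raising to the $s$-th power gives~\eqref{eq:optimal}. The main obstacle is precisely the per-index bound: without \emph{full} linear convergence (i.e., Theorem~\ref{theorem:linconv} valid at every intermediate solver iterate, not only at refinement steps), one could only bound $\#\TT_\ell$ in terms of $\Delta_\ell^{\k(\ell)}$, which may be orders of magnitude smaller than $\Delta_\ell^k$ for early $k$, destroying the geometric-series summation on which the cost bound hinges --- this is exactly the gap with~\cite{banach,stevenson2007,cg2012} that the present theorem closes.
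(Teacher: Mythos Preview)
Your proposal is correct and follows essentially the same approach as the paper's proof: the lower bound via direct comparison at a suitably chosen mesh level (with the pathological cases handled by Corollary~\ref{cor:linconv}), and the upper bound via the chain ``stopping criterion $\Rightarrow$ transfer D\"orfler marking to $u_\ell^\star$ with parameter~$\theta'$ $\Rightarrow$ comparison lemma of~\cite{axioms} $\Rightarrow$ bound on $\#\MM_\ell$ $\Rightarrow$ mesh-closure $\Rightarrow$ per-index bound via full linear convergence $\Rightarrow$ second geometric-series summation''. Two minor remarks: the precise constant in the solver-error estimate is $\enorm{u_\ell^\star - u_\ell^{\k}} \le \Cstab^{-1}(\lctr/\lopt)\,\eta_\ell(u_\ell^{\k})$ (the extra $\Cstab^{-1}$ is exactly what cancels when you apply~\eqref{axiom:stability}), and for the lower bound you can bypass the stopping criterion entirely, since the C\'ea lemma already gives $\enorm{u_{\ell'}^\star - u_{\ell'}^{k'}} \lesssim \enorm{u^\star - u_{\ell'}^{k'}}$ for any $k'$.
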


\bigskip

%%%%%%%%%%%%%%%%%%%%%%%%%%%%%%%%%%%%%%%%%%%%%%%%%%%%%%%%%%%%%%%%%%%%%%%%%%%%%%%%%%%
\subsection{Remarks on abstract assumptions}
\label{section:remarks}
%%%%%%%%%%%%%%%%%%%%%%%%%%%%%%%%%%%%%%%%%%%%%%%%%%%%%%%%%%%%%%%%%%%%%%%%%%%%%%%%%%%
In this section, we briefly comment on the validity of the abstract assumptions made. 

{\it\textbf{Mesh-refinement~\bf(\ref{axiom:sons})--(\ref{axiom:closure}).}}
\label{rem:refinement axioms}
 For mesh-refinement of simplical meshes by newest vertex bisection, the properties~\eqref{axiom:sons}--\eqref{axiom:closure} are verified in~\cite{bdd2004,stevenson2007,stevenson2008,ckns2008,gss2014}. We note that the mesh-closure estimate~\eqref{axiom:closure} requires a technical \emph{admissibility condition} on $\TT_0$ for $d \ge 3$, which is proven unnecessary for $d = 2$ in~\cite{kpp2013}. We refer to~\cite{bn2010} for red-refinement with first-order hanging nodes.
 In the frame of isogeometric analysis, we mention the mesh-refinement techniques for  analysis-suitable T-splines \cite{mp15}, truncated hierarchical B-splines \cite{bgmp16}, and hierarchical B-splines \cite{ghp17}. 

\textbf{\textit{Error estimator}~(\ref{axiom:stability})--(\ref{axiom:discrete_reliability}).}
 The verification of~\eqref{axiom:stability}--\eqref{axiom:discrete_reliability} in Section~\ref{section:linear} and \ref{section:nonlinear} relies on scaling arguments and implicitly uses that all meshes $\TT_\coarse \in \T$ are uniformly shape regular. Moreover, we note that the analysis is implicitly tailored to weighted-residual error estimators, since the usual verification of~\eqref{axiom:reduction} relies on exploiting the contraction of the mesh-size on refined elements.

\textbf{\textit{Iterative solver}~(\ref{axiom:energy})--(\ref{axiom:norm}).}
\label{rem:solver axioms}
For linear symmetric problems, one usually has that $\EE(v_\coarse) - \EE(u_\coarse^\star) = \frac{1}{2} \, \enorm{v_\coarse - u_\coarse^\star}^2$, and hence~\eqref{axiom:energy} and~\eqref{axiom:norm} are equivalent.

In the setting of strongly monotone and Lipschitz continuous nonlinear operators (see Section~\ref{section:abstract}), the Zarantonello (or Banach--Picard) iteration $\Phi_\coarse : \XX_\coarse \to \XX_\coarse$ defined by 
\begin{align}\label{eq:definition:phi}
 \product{\Phi_\coarse(v_\coarse)}{w_\coarse}
 = \product{v_\coarse}{w_\coarse}
 - \frac{\alpha}{L^2} \dual{\AA v_\coarse - F}{w_\coarse}_{\HH'\times\HH}
 \quad \text{for all } w_\coarse \in \XX_\coarse
\end{align}
satisfies~\eqref{axiom:norm} with $\qctr^2 = 1-\alpha^2/L^2$; see, e.g.,~\cite{cw2017,banach,hw18,hw19}. Hence, 
\begin{align*}
 \EE(\Phi_\coarse(v_\coarse)) \!-\! \EE(u_\coarse^\star)
 \reff{eq:energy}\le
 \frac{L}{2} \, \enorm{u_\coarse^\star \!-\! \Phi_\coarse(v_\coarse)}^2
 \reff{axiom:norm}\le \frac{L}{2} \, \qctr^2 \, \enorm{u_\coarse^\star \!-\! v_\coarse}^2
 \reff{eq:energy}\le \frac{L}{\alpha} \, \qctr^2 \, \big( \EE(v_\coarse) \!-\! \EE(u_\coarse^\star) \big).
\end{align*}
In this case, the additional validity of~\eqref{axiom:energy} with the modified constant $\frac{L}{\alpha} \, \qctr^2$ follows from an additional condition on $L/\alpha$ involving the \emph{golden ratio}, namely
\begin{align}\label{eq:golden_ratio}
 0 \le \frac{L}{\alpha} \, \qctr^2 = \frac{L}{\alpha} - \frac{\alpha}{L} < 1 
 \quad \Longleftrightarrow \quad
 \frac{L}{\alpha} < \frac{1+ \sqrt{5}}{2} \approx 1.618.
\end{align}
Moreover, with the same arguments,~\eqref{axiom:energy} guarantees that
\begin{align*}
 \enorm{u_\coarse^\star \!-\! \Phi_\coarse(v_\coarse)}^2
 \le \frac{L}{\alpha} \, \qctr^2 \, \enorm{u_\coarse^\star  \!-\! v_\coarse}^2.
\end{align*}
Hence, the condition~\eqref{eq:golden_ratio} even yields equivalence of~\eqref{axiom:energy} and~\eqref{axiom:norm} (but with different contraction constants $\qctr$).

%%%%%%%%%%%%%%%%%%%%%%%%%%%%%%%%%%%%%%%%%%%%%%%%%%%%%%%%%%%%%%%%%%%%%%%%%%%%%%%%%%%
\subsection{AFEM for linear elliptic PDE with optimal PCG solver}
\label{section:linear}
%%%%%%%%%%%%%%%%%%%%%%%%%%%%%%%%%%%%%%%%%%%%%%%%%%%%%%%%%%%%%%%%%%%%%%%%%%%%%%%%%%%
We consider the boundary value problem \eqref{eq:strong}, where we assume that 
\begin{align}
A:L^2(\Omega)^d\to L^2(\Omega)^d \quad\text{has the form} \quad A({\bf v})=\big[ x\mapsto{\bf A}(x){\bf v}(x) \big],
\end{align}
 where ${\bf A}\in W^{1,\infty}(\Omega)^{d\times d}$ is symmetric and uniformly positive definite. 
Note that ${\bf A}\in W^{1,\infty}(\Omega)^{d\times d}$ instead of ${\bf A}\in L^{\infty}(\Omega)^{d\times d}$ is only necessary to ensure that the residual error indicators~\eqref{eq:indicators for linear afem} are well-defined.
One easily checks that 
\begin{align}\label{eq:potential for linear afem}
P:H_0^1(\Omega)\to\R, \quad v\mapsto \frac{1}{2}\int_\Omega{\bf A}\nabla v \cdot\nabla v\,{\rm d}x
\end{align}
 satisfies \eqref{eq:potential}. 
If we equip $H_0^1(\Omega)$ with the scalar product 
\begin{align}\label{eq:product for linear afem}
\product{w}{v}:=\int_\Omega{\bf A}\nabla w \cdot\nabla v\,{\rm d}x,
\end{align}
then \eqref{eq:monotone+lipschitz} is satisfied with $\alpha = 1 = L$. 

Let  $\TT_0$ be a conforming initial triangulation of $\Omega$ into simplices $T\in\TT_0$.
We employ newest vertex bisection for $\refine$.
According to Remark~\ref{rem:refinement axioms}, \eqref{axiom:sons}--\eqref{axiom:closure} are satisfied. 
For each $\TT_\coarse\in\T$, we define the corresponding space 
\begin{align}
\XX_\coarse:=\set{v\in C(\Omega)}{v|_\Gamma=0 \text{ and }v|_T\in\mathcal{P}^p\text{ for all }T\in\TT_\coarse} 
\end{align}
as the space of all continuous piecewise polynomials of  fixed degree $p\ge1$ that vanish on the boundary $\Gamma=\partial\Omega$.
We define the weighted-residual error indicators (see, e.g., \cite{ao11,verfuerth13}) for all $T\in\TT_\coarse$ and $v_\coarse\in\XX_\coarse$ as
\begin{align}\label{eq:indicators for linear afem}
\eta_\coarse(T,v_\coarse)^2:= |T|^{2/d}\norm{f+\div ({\bf A}\nabla v_\coarse)}{L^2(T)} 
+|T|^{1/d}\norm{[{\bf A}\nabla v_\coarse\cdot {\bf n}]}{L^2(\partial T\cap \Omega)},
\end{align}
where $[\cdot]$ denotes the usual jump of piecewise continuous functions across element interfaces, and ${\bf n}$ is the outer normal vector of the considered element. 
It is well-known that the resulting error estimator satisfies the axioms \eqref{axiom:stability}--\eqref{axiom:discrete_reliability}; see, e.g., \cite[Section~6.1]{axioms} and the references therein. 

Finally, we introduce the iteration function $\Phi_\coarse:\XX_\coarse\to\XX_\coarse$ as one step of PCG: 
Let
\begin{align} 
{\bf M}_\coarse:=\Big(\int_\Omega{\bf A}\nabla \zeta_j \cdot\nabla\zeta_i\,{\rm d}x\Big)_{i,j=1}^N
\in\R^{N\times N}
\end{align}
 be the Galerkin matrix corresponding to the usual Lagrangian basis $\{\zeta_1,\dots,\zeta_N\}$ of $\XX_\coarse$ and ${\bf P}_\coarse\in\R^{N\times N}$ be an arbitrary symmetric positive definite  preconditioner. 
With the right-hand side vector 
\begin{align}
{\bf b}_\coarse:=\Big( \int_\Omega f \,\zeta_i \,{\rm d}x\Big)_{i=1}^N \in \R^N
\end{align}
 corresponding to \eqref{eq:discrete}, the coefficient vector ${\bf x}_\coarse^\star\in\R^N$ of $u_\coarse^\star=\sum_{i=1}^N {\bf x}_\coarse^\star[i]\,\zeta_i$ is the unique solution of the linear system
\begin{align}\label{eq:linearsystem}
{\bf M}_\coarse {\bf x}_\coarse^\star={\bf b}_\coarse.
\end{align}
For any $v_\coarse\in\XX_\coarse$ with coefficient vector ${\bf y}_\coarse\in\R^N$, we have the elementary identity 
\begin{align}\label{eq:normequi2}
 \enorm{v_\coarse}^2 
 = {\bf y}_\coarse \cdot {\bf M}_\coarse {\bf y}_\coarse
 =:  |{\bf y}_\coarse|_ {{\bf M}_\coarse}^2.
\end{align}
Given an initial guess ${\bf x}_{\coarse}^0$, PCG (see~\cite[Algorithm~11.5.1]{matcomp}) approximates the solution ${\bf x}_\coarse^\star \in \R^N$.
We note that each step of PCG has the following computational costs:
\begin{itemize}
\item $\OO(N)$ costs for vector operations (e.g., assignment, addition, scalar product),
\item computation of \emph{one} matrix-vector product with ${\bf M}_\coarse$,
\item computation of \emph{one} matrix-vector product with ${\bf P}_\coarse^{-1}$.
\end{itemize}
PCG formally applies the conjugate gradient method (CG, see~\cite[Algorithm~11.3.2]{matcomp}) for the matrix $\widetilde{\bf M}_\coarse := {\bf P}_\coarse^{-1/2} {\bf M}_\coarse {\bf P}_\coarse^{-1/2}$ and the right-hand side $\widetilde{\bf b}_\coarse := {\bf P}_\coarse^{-1/2} {\bf b}_\coarse$. The iterates ${\bf x}_{\coarse}^k \in \R^N$ of PCG (for ${\bf P}_\coarse$, ${\bf M}_\coarse$, ${\bf b}_\coarse$, and the initial guess ${\bf x}_{\coarse}^0$) and the iterates $\widetilde{\bf x}_{\coarse}^k$ of CG (for $\widetilde{\bf M}_\coarse$, $\widetilde{\bf b}_\coarse$, and the initial guess $\widetilde{\bf x}_{\coarse}^0 := {\bf P}_\coarse^{1/2} {\bf x}_{\coarse}^0$) are formally linked by
\begin{align*}
 {\bf x}_{\coarse}^k = {\bf P}_\coarse^{-1/2} \widetilde{\bf x}_{\coarse}^k;
\end{align*}
see~\cite[Section~11.5]{matcomp}. Moreover, direct computation proves that
\begin{align}\label{eq:normequi1}
|\widetilde{\bf y}_\coarse|_{\widetilde{\bf M}_\coarse}^2 := \widetilde{\bf y}_\coarse \cdot \widetilde{\bf M}_\coarse \widetilde{\bf y}_\coarse 
 =  |{\bf y}_\coarse|_{{\bf M}_\coarse}^2
 \quad \text{for all $\widetilde {\bf y}_\coarse \in \R^N$ and ${\bf y}_\coarse = {\bf P}_\coarse^{-1/2} \widetilde {\bf y}_\coarse$}.
\end{align}
Hence,~\cite[Theorem~11.3.3]{matcomp} for CG (applied to $\widetilde{\bf M}_\coarse$, $\widetilde{\bf b}_\coarse$,  $\widetilde{\bf x}_{\coarse0}$) yields the following lemma for PCG (which follows from the implicit steepest decent property of CG).  

\begin{lemma}\label{lemma:pcg}
Let ${\bf M}_\coarse, {\bf P}_\coarse \in \R^{N \times N}$ be symmetric and positive definite, ${\bf b}_\coarse\in \R^N$, ${\bf x}_\coarse^\star := {\bf M}_\coarse^{-1} {\bf b}_\coarse$, and ${\bf x}_{\coarse}^0 \in \R^N$. 
Suppose the $\ell_2$-condition number estimate
\begin{align}\label{eq1:pcg}
 {\rm cond}_2({\bf P}_\coarse^{-1/2} {\bf M}_\coarse {\bf P}_\coarse^{-1/2}) \le \Cpcg.
\end{align}
Then, the iterates ${\bf x}_{\coarse}^k$ of the PCG algorithm satisfy the contraction property
\begin{align}\label{eq2:pcg}
|{\bf x}_\coarse^\star - {\bf x}_{\coarse}^{k+1}|_{{\bf M}_\coarse} 
 \le \qpcg \, |{\bf x}_\coarse^\star - {\bf x}_{\coarse}^k|_{{\bf M}_\coarse}
 \quad\text{for all } k \in \N_0,
\end{align}
where $\qpcg := (1-1/\Cpcg)^{1/2} < 1$.\qed
\end{lemma}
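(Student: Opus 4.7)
The strategy is a direct reduction to the per-step contraction of the plain CG method applied to the transformed system $\widetilde{\bf M}_\coarse \widetilde{\bf x} = \widetilde{\bf b}_\coarse$. The PCG$\,\leftrightarrow\,$CG correspondence and the norm identity~\eqref{eq:normequi1} have already been set up in the paragraph preceding the lemma, so the proof is essentially algebraic bookkeeping around the cited CG bound [matcomp, Theorem~11.3.3].

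First, I would record the classical per-step contraction of CG for the symmetric positive definite matrix $\widetilde{\bf M}_\coarse$ with ${\rm cond}_2(\widetilde{\bf M}_\coarse) \le \Cpcg$. The argument is the "implicit steepest descent" property invoked in the text: the $(k{+}1)$-th CG iterate $\widetilde{\bf x}_\coarse^{k+1}$ minimizes $|\widetilde{\bf x}_\coarse^\star - \,\cdot\,|_{\widetilde{\bf M}_\coarse}$ over an affine Krylov subspace that contains every update of the form $\widetilde{\bf x}_\coarse^k + \alpha (\widetilde{\bf b}_\coarse - \widetilde{\bf M}_\coarse \widetilde{\bf x}_\coarse^k)$. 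Hence the error after one CG step is bounded by the error after one steepest descent step, and the Kantorovich inequality applied to $\widetilde{\bf M}_\coarse$ yields
\begin{equation*}
|\widetilde{\bf x}_\coarse^\star - \widetilde{\bf x}_\coarse^{k+1}|_{\widetilde{\bf M}_\coarse}^2
 \le \bigl(1 - 1/\Cpcg\bigr) \, |\widetilde{\bf x}_\coarse^\star - \widetilde{\bf x}_\coarse^k|_{\widetilde{\bf M}_\coarse}^2,
\end{equation*}
where $\widetilde{\bf x}_\coarse^\star := \widetilde{\bf M}_\coarse^{-1} \widetilde{\bf b}_\coarse$ and $\widetilde{\bf x}_\coarse^k$ are the CG iterates starting from $\widetilde{\bf x}_\coarse^0 := {\bf P}_\coarse^{1/2} {\bf x}_\coarse^0$. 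For the present write-up I would cite this bound directly from [matcomp, Theorem~11.3.3].

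Second, I would transport this back to PCG in the $|\cdot|_{{\bf M}_\coarse}$-norm. By the PCG$\,\leftrightarrow\,$CG link recalled in the excerpt, ${\bf x}_\coarse^k = {\bf P}_\coarse^{-1/2} \widetilde{\bf x}_\coarse^k$ for all $k \in \N_0$, and since $\widetilde{\bf M}_\coarse \widetilde{\bf x}_\coarse^\star = \widetilde{\bf b}_\coarse$ rewrites to ${\bf M}_\coarse \, {\bf P}_\coarse^{-1/2} \widetilde{\bf x}_\coarse^\star = {\bf b}_\coarse$, one also has ${\bf x}_\coarse^\star = {\bf P}_\coarse^{-1/2} \widetilde{\bf x}_\coarse^\star$. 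Applying the identity~\eqref{eq:normequi1} with $\widetilde{\bf y}_\coarse := \widetilde{\bf x}_\coarse^\star - \widetilde{\bf x}_\coarse^k$ and its PCG-image ${\bf y}_\coarse = {\bf x}_\coarse^\star - {\bf x}_\coarse^k$ gives $|\widetilde{\bf x}_\coarse^\star - \widetilde{\bf x}_\coarse^k|_{\widetilde{\bf M}_\coarse} = |{\bf x}_\coarse^\star - {\bf x}_\coarse^k|_{{\bf M}_\coarse}$ for every $k$; inserting this on both sides of the displayed CG contraction yields~\eqref{eq2:pcg} with $\qpcg = (1 - 1/\Cpcg)^{1/2} < 1$.

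The only genuinely non-trivial step is the per-step CG contraction itself, whose proof I delegate entirely to the reference; the rest is checking that the change of variables ${\bf x} \leftrightarrow \widetilde{\bf x}$ intertwines the two norms correctly, which is exactly the content of~\eqref{eq:normequi1}.
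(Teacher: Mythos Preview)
Your proposal is correct and follows precisely the route indicated in the paper: the lemma is stated with a \qed and no separate proof, because the text immediately preceding it already explains that it follows from~\cite[Theorem~11.3.3]{matcomp} applied to the transformed CG system together with the norm identity~\eqref{eq:normequi1} and the PCG--CG correspondence ${\bf x}_\coarse^k = {\bf P}_\coarse^{-1/2}\widetilde{\bf x}_\coarse^k$. Your write-up simply spells out this reduction in detail, including the implicit steepest descent argument behind the cited per-step contraction; nothing more is needed.
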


Finally, we suppose that the employed preconditioners ${\bf P}_\coarse$ are optimal in the sense that $\Cpcg$ depends only onthe coefficient matrix ${\bf A}$, the initial mesh $\TT_0$, and  the polynomial degree $p$. 
We stress that such optimal symmetric positive preconditioners exist and the product of  ${\bf P}_\coarse$ with \emph{one} vector can be realized in linear complexity $\OO(N)$; see, e.g., \cite{wc06,smpz08,xch10,cnx12}.
Then, \eqref{eq2:pcg} together with  \eqref{eq:normequi2} immediately gives the norm contraction \eqref{axiom:norm}. 
Due to \eqref{eq:potential for linear afem}--\eqref{eq:product for linear afem}, we have that $|\mathcal{E}(v)-\mathcal{E}(w)|=(1/2)\enorm{w-v}^2$ for all $v,w\in H_0^1(\Omega)$, and thus \eqref{axiom:norm} is equivalent to \eqref{axiom:energy}.
Altogether, the main results from Section~\ref{section:main-results} apply to the present setting. Moreover, the linear convergence \eqref{eq:theorem:linconv} from Theorem~\ref{theorem:linconv} holds even for arbitrary $\lctr>0$ and $0<\theta\le1$ in Algorithm~\ref{algorithm}.

%%%%%%%%%%%%%%%%%%%%%%%%%%%%%%%%%%%%%%%%%%%%%%%%%%%%%%%%%%%%%%%%%%%%%%%%%%%%%%%%%%%
\subsection{AFEM for strongly monotone nonlinearity}
\label{section:nonlinear}
%%%%%%%%%%%%%%%%%%%%%%%%%%%%%%%%%%%%%%%%%%%%%%%%%%%%%%%%%%%%%%%%%%%%%%%%%%%%%%%%%%%
We consider the boundary value problem \eqref{eq:strong}, where we assume that 
\begin{align}
A:L^2(\Omega)^d\to L^2(\Omega)^d \quad \text{has the form}\quad A({\bf v})=\big[ x\mapsto{a}(x,|{\bf v}(x)|^2) \, {\bf v}(x) \big]
\end{align}
 with a scalar nonlinearity ${a}: \Omega \times \R_{\geq 0} \rightarrow \R$ that satisfies the following properties~\eqref{item:G_bounded}--\eqref{item:G_lipschitz1} with generic constants $c_a, C_a, c_a', C_a', L_a, L_a'>0$, which have already been considered in~\cite{gmz2012,banach}:

 \renewcommand{\theenumi}{N\arabic{enumi}}
\begin{enumerate}
 \bf
 \item \label{item:G_bounded}
 \rm
{\bf boundedness of $\boldsymbol{a(x,t)}$:} $ c_a \le {a}(x,t) \le C_a$  for all $x \in \Omega$ and all  $t \geq 0$.
 \bf
 \item \label{item:G_differentiable}
 \rm
{\bf boundedness of $\boldsymbol{{a}(x,t) +2 t  \frac{\d{}}{\d{t}} {a}(x,t)}$:}  ${a}(x,\cdot) \in C^1(\R_{\geq 0} ,\R)$ for all $x \in \Omega$, and  
$ c_a' \le {a}(x,t) +2 t  \frac{\d{}}{\d{t}} {a}(x,t) \le C_a'$  for all  $x \in \Omega$  and all  $t \geq 0$.
  \bf
 \item \label{item:G_lipschitz}
 \rm
{\bf Lipschitz-continuity of $\boldsymbol{a(x,t)}$ in $\boldsymbol{x}$:}
$ | {a}(x,t) - {a}(y,t) | \le L_{{a}} | x - y |$  for all  $x,y \in \Omega$  and all  $t \geq 0$.
 \bf
 \item \label{item:G_lipschitz1}
 \rm
{\bf Lipschitz-continuity of $\boldsymbol{t \frac{\d{}}{\d{t}} {a}(x,t)}$ in $\boldsymbol{x}$:} 
$| t \frac{\d{}}{\d{t}} {a}(x,t) - t \frac{\d{}}{\d{t}} {a}(y,t) | \le {L}_{{a}}' | x - y |$  for all  $x,y \in \Omega$
  and all  $t \geq 0$.
\end{enumerate}

According to, e.g., \cite[Proposition~8.2]{banach}, \eqref{item:G_bounded}--\eqref{item:G_differentiable} imply the existence of some $P: H_0^1(\Omega)\to\R$ with \eqref{eq:potential}--\eqref{eq:monotone+lipschitz}, where $\alpha=c_a'$ and $L=C_a'$ if $H_0^1(\Omega)$ is equipped with the scalar product $\product{w}{v}:=\int_\Omega\nabla w\cdot\nabla v\,{\rm d}x$. 

Let  $\TT_0$ be a conforming initial triangulation of $\Omega$ into simplices $T\in\TT_0$.
We employ newest vertex bisection for $\refine$.
According to Remark~\ref{rem:refinement axioms}, \eqref{axiom:sons}--\eqref{axiom:closure} are satisfied. 
For each $\TT_\coarse\in\T$, consider the lowest-order FEM space 
\begin{align}\label{eq:nonlinear:Xh}
\XX_\coarse:=\set{v\in C(\Omega)}{v|_\Gamma=0 \text{ and }v|_T\in\mathcal{P}^1\text{ for all }T\in\TT_\coarse}.
\end{align} 
We define the weighted-residual error indicators (see, e.g., \cite{gmz2012,banach}) for all $T\in\TT_\coarse$ and $v_\coarse\in\XX_\coarse$ as
\begin{align}
\begin{split}
\eta_\coarse(T,v_\coarse)^2:= &|T|^{2/d}\norm{f+\div ({a}(\cdot,|\nabla v_\coarse|^2) \nabla v_\coarse)}{L^2(T)} \\
+&|T|^{1/d}\norm{[{a}(\cdot,|\nabla v_\coarse|^2)\nabla v_\coarse)\cdot {\bf n}]}{L^2(\partial T\cap \Omega)},
\end{split}
\end{align}
where $[\cdot]$ denotes the usual jump of piecewise continuous functions across element interfaces, and ${\bf n}$ is the outer normal vector of the considered element. 
Note that \eqref{item:G_lipschitz} guarantees that the presented error indicators are well-defined. 
Reliability~\eqref{axiom:reliability} and discrete reliability~\eqref{axiom:discrete_reliability} are proved as in the linear case; see, e.g.,~\cite{ckns2008} for the linear case and~\cite[Theorem~3.3 and 3.4]{gmz2012} for 
the present nonlinear setting.

The verification of stability~\eqref{axiom:stability} and reduction~\eqref{axiom:reduction} requires the validity of an appropriate inverse estimate. For scalar nonlinearities and under the assumptions~\eqref{item:G_bounded}--\eqref{item:G_lipschitz1}, the latter is proved in~{\cite[Lemma 3.7]{gmz2012}}. Using this inverse estimate, the proof of~\eqref{axiom:stability}--\eqref{axiom:reduction}
follows as for the linear case; see, e.g.,~\cite{ckns2008} for the linear case or \cite[Section~3.3]{gmz2012} for scalar nonlinearities. We note that the necessary inverse estimate is still open for non-scalar nonlinearities and/or higher polynomial order $p \ge 2$.

As iteration function $\Phi_\coarse:\XX_\coarse\to\XX_\coarse$, we employ the Zarantonello iteration from~\eqref{eq:definition:phi}.
In the present setting,  $\Phi_\coarse(v_\coarse)\in\XX_\coarse$ is obtained by solving the linear system
\begin{align}
\begin{split}
\int_\Omega \nabla \Phi_\coarse(v_\coarse)\cdot \nabla w_\coarse\,\mathrm{d}x
 = \int_\Omega \big(1- \frac{\alpha}{L^2}{a}(\cdot,|\nabla v_\coarse|^2)\big) \nabla v_\coarse \cdot \nabla w_\coarse \,\mathrm{d}x
+ \frac{\alpha}{L^2} \int_\Omega f w_\coarse \, \mathrm{d}x
 \end{split}
\end{align}
for all $w_\coarse \in \XX_\coarse$.
In explicit terms, the computation of one step of the iteration requires only the solution of one (discretized) Poisson equation with homogeneous Dirichlet data. 

Altogether, the main results from Section~\ref{section:main-results} apply to the present setting.

%%%%%%%%%%%%%%%%%%%%%%%%%%%%%%%%%%%%%%%%%%%%%%%%%%%%%%%%%%%%%%%%%%%%%%%%%%%%%%%%%%%
%%%%%%%%%%%%%%%%%%%%%%%%%%%%%%%%%%%%%%%%%%%%%%%%%%%%%%%%%%%%%%%%%%%%%%%%%%%%%%%%%%%
\section{Proof of Proposition~\ref{lemma:reliable} (a~posteriori error control)}
\label{section:reliable}
%%%%%%%%%%%%%%%%%%%%%%%%%%%%%%%%%%%%%%%%%%%%%%%%%%%%%%%%%%%%%%%%%%%%%%%%%%%%%%%%%%%
%%%%%%%%%%%%%%%%%%%%%%%%%%%%%%%%%%%%%%%%%%%%%%%%%%%%%%%%%%%%%%%%%%%%%%%%%%%%%%%%%%%

For $(\ell,k) \in \overline\QQ$ and $k > 0$, it holds that
\begin{align*}
 &\enorm{u^\star - u_\ell^k} 
 \le \enorm{u^\star - u_\ell^\star}+ \enorm{u_\ell^\star - u_\ell^k} 
 \reff{axiom:reliability}\le \Crel \, \eta_\ell(u_\ell^\star) + \enorm{u_\ell^\star - u_\ell^k} 
 \\& \quad
 \reff{axiom:stability}\le \Crel \, \eta_\ell(u_\ell^k) + (\Crel\Cstab + 1) \, \enorm{u_\ell^\star - u_\ell^k}.
\end{align*}
Suppose~\eqref{axiom:norm}.
With Lemma~\ref{lemma:lambda} (i)$\&$(iii), it then follows that
\begin{align*}
 \enorm{u_\ell^\star - u_\ell^k}
 \le \frac{\qctr}{1-\qctr} \, \enorm{u_\ell^k - u_\ell^{k-1}}
 = \frac{\qctr}{1-\qctr} \, \dist(u_\ell^k, u_\ell^{k-1}).
\end{align*}
Suppose~\eqref{axiom:energy}. With~\eqref{eq:energy} and Lemma~\ref{lemma:lambda}(i)$\&$(iii), it then follows that
\begin{align*}
 \enorm{u_\ell^\star - u_\ell^k}
 \reff{eq:energy}\le \sqrt{2/\alpha} \, \dist(u_\ell^\star, u_\ell^k)
 \le \sqrt{2/\alpha} \, \frac{\qctr}{1-\qctr} \, \dist(u_\ell^k, u_\ell^{k-1}).
\end{align*}
Since $\Delta_\ell^k = \enorm{u^\star - u_\ell^k} + \eta_\ell(u_\ell^k)$, this proves~\eqref{eq:lemma:reliable} for the case that $0 < k \le \k(\ell)$. If $k = \k(\ell)$, then the stopping criterion in Algorithm~\ref{algorithm}(iv) yields that
\begin{align*}
 \dist(u_\ell^\k, u_\ell^{\k-1}) \le \lctr \eta_\ell(u_\ell^\k).
\end{align*}
This proves~\eqref{eq:lemma:reliable} for $k = \k(\ell)$. If $k = 0$ and $\ell > 0$, then $u_\ell^0 = u_{\ell-1}^\k$ and hence 
$$
 \enorm{u^\star - u_\ell^0} = \enorm{u^\star - u_{\ell-1}^\k} \lesssim \eta_{\ell-1}(u_{\ell-1}^\k) = \eta_{\ell-1}(u_\ell^0)
$$
follows from the previous step.
Moreover, \eqref{axiom:stability}--\eqref{axiom:reduction} yield that $\eta_\ell(u_\ell^0) \le \eta_{\ell-1}(u_\ell^0)$. 
Since $\Delta_\ell^k = \enorm{u^\star - u_\ell^k} + \eta_\ell(u_\ell^k)$, this concludes the proof.\qed

%%%%%%%%%%%%%%%%%%%%%%%%%%%%%%%%%%%%%%%%%%%%%%%%%%%%%%%%%%%%%%%%%%%%%%%%%%%%%%%%%%%
%%%%%%%%%%%%%%%%%%%%%%%%%%%%%%%%%%%%%%%%%%%%%%%%%%%%%%%%%%%%%%%%%%%%%%%%%%%%%%%%%%%
\section{Proof of Theorem~\ref{theorem:linconv} (linear convergence)}
\label{section:proofs}
%%%%%%%%%%%%%%%%%%%%%%%%%%%%%%%%%%%%%%%%%%%%%%%%%%%%%%%%%%%%%%%%%%%%%%%%%%%%%%%%%%%
%%%%%%%%%%%%%%%%%%%%%%%%%%%%%%%%%%%%%%%%%%%%%%%%%%%%%%%%%%%%%%%%%%%%%%%%%%%%%%%%%%%

\noindent
Recall the definition of $\dist(\cdot,\cdot)$ from~\eqref{eq:def:dist}.
According to Algorithm~\ref{algorithm}, the contractive solver stops for the minimal $k = \k(\ell) \ge 1$ such that 
\begin{align}\label{eq:stopping:linearization}
 \dist(u_\ell^\k, u_\ell^{\k-1})
 \le \lctr \, \eta_\ell(u_\ell^\k).
\end{align}
In particular, this implies that
\begin{align}\label{eq2:stopping:linearization}
 \eta_\ell(u_\ell^k) 
 < \lctr^{-1} \, \dist(u_\ell^k, u_\ell^{k-1})
 \quad \text{for all } (\ell,k) \in \QQ \text{ with } k > 0.
\end{align}

%%%%%%%%%%%%%%%%%%%%%%%%%%%%%%%%%%%%%%%%%%%%%%%%%%%%%%%%%%%%%%%%%%%%%%%%%%%%%%%%%%%
\subsection{Proof of Theorem~\ref{theorem:linconv} under assumption~(\ref{axiom:energy})}
%%%%%%%%%%%%%%%%%%%%%%%%%%%%%%%%%%%%%%%%%%%%%%%%%%%%%%%%%%%%%%%%%%%%%%%%%%%%%%%%%%%
In this section, we give a proof of Theorem~\ref{theorem:linconv} under the assumption that the iterative solver $\Phi_\ell$ leads to a uniform contraction of the discrete energy~\eqref{axiom:energy}. 
Note that 
\begin{align}\label{eq:galerkin}
 \dist(u^\star, v_\coarse)^{2} = \dist(u^\star, u_\coarse^\star)^{2} + \dist(u_\coarse^\star, v_\coarse)^{2}
 \quad \text{for all } v_\coarse \in \XX_\coarse,
\end{align}
since all three energy differences in the \emph{latter equality} are non-negative (see~\eqref{eq:energy}), and hence the absolute values in the definition of $\dist(\cdot, \cdot)$ can be omitted.

\begin{lemma}\label{prop:lambda}
Suppose \eqref{axiom:stability}--\eqref{axiom:reliability} and \eqref{axiom:energy}.
Let $0<\theta\le 1$ and $\lctr>0$.
Then, there exist constants $\mu > 0$ and $0 < \qlin < 1$ such that 
\begin{align}\label{eq:lambda}
\Lambda_\ell^k := \dist(u^\star, u_\ell^k)^{2} + \mu \, \eta_\ell(u_\ell^k)^2
\quad \text{for all } (\ell, k) \in \QQ
\end{align}
satisfies the following statements~{\rm(i)}--{\rm(ii)}:
\begin{itemize}
\item[\rm(i)] $\Lambda_\ell^{k+1} \le \qlin^2 \, \Lambda_\ell^k$ for all $(\ell,k+1) \in \QQ$.
\item[\rm(ii)] $\Lambda_{\ell+1}^0 \le \qlin^2 \, \Lambda_\ell^{\k-1}$ for all $(\ell+1,0) \in \QQ$.
\end{itemize}
The constants $\mu$ and $\qlin$ depend only on $L$, $\alpha$, $\Cstab$, $\qred$, $\Crel$, and $\qctr$ as well as on the adaptivity parameters $0 < \theta \le 1$ and $\lctr > 0$.
\end{lemma}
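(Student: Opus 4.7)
\textbf{Proof plan for Lemma~\ref{prop:lambda}.}
The strategy rests on the Pythagorean identity~\eqref{eq:galerkin}, which lets us split
\begin{align*}
\Lambda_\ell^k = \dist(u^\star, u_\ell^\star)^2 + \dist(u_\ell^\star, u_\ell^k)^2 + \mu\,\eta_\ell(u_\ell^k)^2.
\end{align*}
The middle summand contracts by $\qctr^2$ under the solver step thanks to~\eqref{axiom:energy}; the third contracts by a Dörfler-based factor $q_\theta^2 := 1 - (1-\qred^2)\theta^2 < 1$ under mesh refinement (proved by~\eqref{axiom:stability} applied on $\TT_\coarse\cap\TT_\fine$ with common argument $u_\ell^{\underline k}$ together with~\eqref{axiom:reduction} and the Dörfler marking). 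The ``frozen'' first summand $\dist(u^\star, u_\ell^\star)^2$ will be controlled via~\eqref{axiom:reliability} followed by~\eqref{axiom:stability}:
\begin{align*}
\dist(u^\star, u_\ell^\star)^2 \le \tfrac{L}{2}\Crel^2\,\eta_\ell(u_\ell^\star)^2
\lesssim \eta_\ell(v_\ell)^2 + \dist(u_\ell^\star, v_\ell)^2
\end{align*}
for any $v_\ell \in \XX_\ell$, where the hidden constants depend on $L$, $\Crel$, $\Cstab$, and $\alpha$.

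\textbf{Proof of (i).} Since $(\ell, k+1)\in\QQ$, the stopping criterion failed at step $k+1$, i.e.\ $\eta_\ell(u_\ell^{k+1}) < \lctr^{-1}\dist(u_\ell^{k+1}, u_\ell^k)$ by~\eqref{eq2:stopping:linearization}. Using that $\dist(\cdot,\cdot)$ is a quasi-metric and~\eqref{axiom:energy} (via Lemma~\ref{lemma:lambda}(i)),
\begin{align*}
\mu\,\eta_\ell(u_\ell^{k+1})^2 \le \mu\lctr^{-2}(1+\qctr)^2\,\dist(u_\ell^\star, u_\ell^k)^2.
\end{align*}
Combined with $\dist(u_\ell^\star, u_\ell^{k+1})^2 \le \qctr^2\,\dist(u_\ell^\star, u_\ell^k)^2$ from~\eqref{axiom:energy}, and the reliability/stability bound on $\dist(u^\star, u_\ell^\star)^2$ applied with $v_\ell = u_\ell^k$, one arrives at
\begin{align*}
\Lambda_\ell^{k+1} \le C_1\,\eta_\ell(u_\ell^k)^2 + \big(\qctr^2 + \mu\lctr^{-2}(1+\qctr)^2 + C_2\big)\,\dist(u_\ell^\star, u_\ell^k)^2.
\end{align*}
Since $\Lambda_\ell^k \ge \mu\,\eta_\ell(u_\ell^k)^2 + \dist(u_\ell^\star, u_\ell^k)^2 + \dist(u^\star,u_\ell^\star)^2$, it remains to verify that both coefficients on the right are bounded by $\qlin^2$ times the corresponding weights on the left, which reduces to the two inequalities
\begin{align*}
\mu \ge (1-\qlin^2)\,C_1',
\qquad
\qlin^2 \ge \qctr^2 + \mu\lctr^{-2}(1+\qctr)^2 + (1-\qlin^2)\,C_2'.
\end{align*}
Both can be satisfied by first fixing $\mu \in \bigl(0, (1-\qctr)\lctr^2/(1+\qctr)\bigr)$, then picking $\qlin<1$ close enough to $1$.

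\textbf{Proof of (ii).} Using the estimator reduction $\eta_{\ell+1}(u_\ell^{\underline k})^2 \le q_\theta^2\,\eta_\ell(u_\ell^{\underline k})^2$, the identities $u_{\ell+1}^0 = u_\ell^{\underline k}$, $\dist(u_\ell^\star, u_\ell^{\underline k})^2 \le \qctr^2\,\dist(u_\ell^\star, u_\ell^{\underline k - 1})^2$ from~\eqref{axiom:energy}, and a Young-type bound via~\eqref{axiom:stability}
\begin{align*}
\eta_\ell(u_\ell^{\underline k})^2 \le (1+\delta)\,\eta_\ell(u_\ell^{\underline k - 1})^2 + C(\delta)\,\dist(u_\ell^\star, u_\ell^{\underline k - 1})^2,
\end{align*}
the same Pythagoras split plus reliability/stability yields an analogue of the estimate from (i) comparing $\Lambda_{\ell+1}^0$ to $\Lambda_\ell^{\underline k - 1}$; the crucial factor is now $q_\theta^2(1+\delta)$ rather than $1$ on the $\eta$-term, which already gives strict contraction on that side provided $\delta$ is chosen small.

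\textbf{Main obstacle.} The delicate point is to choose the \emph{same} pair $(\mu, \qlin)$ so that both (i) and (ii) hold. The two inequalities pull in opposite directions: (i) forces $\mu$ small (to suppress the penalty from the failed stopping criterion), while the $\eta$-inequality in (ii) forces $\mu$ large relative to $(1-\qlin^2)$ (to absorb the reliability contribution). Both constraints are reconciled by first fixing $\mu$ small enough, then taking $\qlin$ sufficiently close to $1$: as $\qlin \to 1^-$, the lower bound on $\mu$ arising from (ii) tends to $0$, while the upper bound on $\mu$ coming from (i) remains a fixed positive number depending only on $\qctr$ and $\lctr$. The explicit dependence of $(\mu,\qlin)$ on $L$, $\alpha$, $\Cstab$, $\qred$, $\Crel$, $\qctr$, $\theta$, and $\lctr$ is recorded along the way.
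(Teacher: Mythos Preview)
Your approach is essentially the paper's: split via the energy Pythagoras identity~\eqref{eq:galerkin}, peel off a small fraction of the frozen term $\dist(u^\star,u_\ell^\star)^2$ and absorb it via reliability/stability, exploit solver contraction~\eqref{axiom:energy} and D\"orfler-based estimator reduction, then close the parameters. Your factor $(1-\qlin^2)$ plays exactly the role of the paper's free parameter~$\eps$, and your parameter-fixing discussion in the ``Main obstacle'' paragraph matches the paper's order (fix~$\delta$, then~$\mu$ small, then~$\eps$ small, i.e., $\qlin$ close to~$1$).

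There is, however, an internal inconsistency in your write-up of~(i) that you should fix. The displayed estimate
\[
\Lambda_\ell^{k+1} \le C_1\,\eta_\ell(u_\ell^k)^2 + \bigl(\qctr^2 + \mu\lctr^{-2}(1+\qctr)^2 + C_2\bigr)\,\dist(u_\ell^\star,u_\ell^k)^2
\]
contains no surviving $\dist(u^\star,u_\ell^\star)^2$ term, which means you have bounded the \emph{full} frozen term by reliability/stability. Taken at face value, the resulting constraints would be $\mu \ge C_1/\qlin^2$ and $C_2 + \qctr^2 + \mu\lctr^{-2}(1+\qctr)^2 \le \qlin^2$; the second may be unsatisfiable since $C_2 \simeq L\alpha^{-1}\Crel^2\Cstab^2$ can exceed~$1$. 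The constraints you actually write down, with the $(1-\qlin^2)$ prefactors, are the correct ones---but they only arise if you first split
\[
\dist(u^\star,u_\ell^\star)^2 = \qlin^2\,\dist(u^\star,u_\ell^\star)^2 + (1-\qlin^2)\,\dist(u^\star,u_\ell^\star)^2
\]
and apply reliability/stability only to the second piece, keeping the first on the right-hand side. This is exactly the $\eps$-splitting the paper performs; make it explicit in your displayed estimate and the rest of your argument goes through unchanged.
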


\begin{proof}[Proof of Lemma~\ref{prop:lambda}\rm(i)]
Let $\mu, \eps > 0$ be free parameters, which will be fixed below.
First, we note that
\begin{align*}
 \enorm{u^\star - u_\ell^\star}^2
 \reff{axiom:reliability}\le \Crel^2 \, \eta_\ell(u_\ell^\star)^2
 \reff{axiom:stability}\le 2 \, \Crel^2 \, \eta_\ell(u_\ell^{k+1})^2
 + 2 \, \Crel^2 \, \Cstab^2 \, \enorm{u_\ell^\star - u_\ell^{k+1}}^2.
\end{align*}
Together with~\eqref{eq:energy}, this leads to
\begin{align*}
 \nonumber
 \dist(u^\star, u_\ell^\star)^{2}
 \reff{eq:energy}\le \frac{L}{2} \, 
 \enorm{u^\star - u_\ell^\star}^2
 &\reff{eq:energy}\le L \Crel^2 \,\eta_\ell(u_\ell^{k+1})^2 
 + 2 L\alpha^{-1} \, \Crel^2 \Cstab^2 \, \dist(u_\ell^\star, u_\ell^{k+1})^{2}.
\end{align*}
Let $C_1 := L \Crel^2$ and $C_2 := 2 \, L\alpha^{-1} \, \Crel^2 \Cstab^2$. With this, we obtain that
\begin{align*}
 \dist(u^\star, u_\ell^{k+1})^{2}
 &\reff{eq:galerkin}= 
  (1-\eps) \, \dist(u^\star, u_\ell^\star)^{2} + \eps \, \dist(u^\star, u_\ell^\star)^{2}
  + \dist(u_\ell^\star, u_\ell^{k+1})^{2}
 \\& 
 \,\le\, 
 (1-\eps) \, \dist(u^\star, u_\ell^\star)^{2}
  + \eps \, C_1 \, \eta_\ell(u_\ell^{k+1})^2 + (1 + \eps \, C_2) \, \dist(u_\ell^\star, u_\ell^{k+1})^{2}
 \\&
 \reff{axiom:energy}\le 
 (1-\eps) \, \dist(u^\star, u_\ell^\star)^{2}
  + \eps \, C_1 \, \eta_\ell(u_\ell^{k+1})^2 + (1 + \eps \, C_2) \, \qctr^2 \, \dist(u_\ell^\star, u_\ell^k)^{2}
\end{align*}
According to the definition of $\QQ$ and Lemma~\ref{lemma:lambda}(ii), it holds that $k + 1 < \k(\ell)$ and hence
\begin{align*}
 \eta_\ell(u_\ell^{k+1})^2 
 \reff{eq2:stopping:linearization}< \lctr^{-2} \, \dist(u_\ell^{k+1}, u_\ell^k)^2
 \stackrel{\rm(ii)}\le \lctr^{-2} \, (1+\qctr)^2 \, \dist(u_\ell^\star, u_\ell^k)^2.
\end{align*}
Let $C_3 := \lctr^{-2} \, (1+\qctr)^2$. Combining the latter two estimates, we see that
\begin{align*}
 \Lambda_\ell^{k+1} 
 &= \dist(u^\star, u_\ell^{k+1})^{2} + \mu \, \eta_\ell(u_\ell^{k+1})^2 
 \\& 
 \le (1-\eps) \, \dist(u^\star, u_\ell^\star)^{2}
  + (\mu + \eps \, C_1) \, \eta_\ell(u_\ell^{k+1})^2 + (1 + \eps \, C_2) \, \qctr^2 \, \dist(u_\ell^\star, u_\ell^k)^{2}
 \\& 
 \le
 (1-\eps) \, \dist(u^\star, u_\ell^\star)^{2}
 + \big\{ (\mu + \eps \, C_1) \, C_3 + (1 + \eps \, C_2) \, \qctr^2 \big\} \, \dist(u_\ell^\star, u_\ell^k)^{2}
\end{align*}
Note that $C_1, C_2, C_3$ depend only on the problem setting. Provided that
\begin{align}\label{eq:prop:lambda:i}
 (\mu + \eps \, C_1) \, C_3 + (1 + \eps \, C_2) \, \qctr^2 \le 1-\eps,
\end{align}
we are thus led to 
\begin{align*}
 \Lambda_\ell^{k+1} \le (1-\eps) \, \big( \dist(u^\star, u_\ell^\star)^{2} + \dist(u_\ell^\star, u_\ell^k)^{2} \big)
 \reff{eq:galerkin}= (1-\eps) \, \dist(u^\star, u_\ell^k)^{2}
 \le (1-\eps) \, \Lambda_\ell^k.
\end{align*}
Up to the final choice of $\mu, \eps > 0$ (see below), this concludes the proof of Lemma~\ref{prop:lambda}(i).
\end{proof}

\begin{proof}[Proof of Lemma~\ref{prop:lambda}\rm(ii)]
Let $\mu, \delta, \eps > 0$ be free parameters, which will be fixed below. First, we note that
\begin{align*}
 \enorm{u^\star - u_\ell^\star}^2
 \reff{axiom:reliability}\le \Crel^2 \, \eta_\ell(u_\ell^\star)^2
 \reff{axiom:stability}\le 2 \, \Crel^2 \, \eta_\ell(u_\ell^{\k-1})^2 + 2\, \Crel^2 \Cstab^2 \, \enorm{u_\ell^\star - u_\ell^{\k -1}}^2.
\end{align*}
Together with~\eqref{eq:energy}, this leads to
\begin{align*}
 \dist(u^\star, u_\ell^\star)^{2}
 \reff{eq:energy}\le \frac{L}{2} \, \enorm{u^\star - u_\ell^\star}^2
 \reff{eq:energy}\le L \, \Crel^2 \, \eta_\ell(u_\ell^{\k-1})^2 
   + 2 L \alpha^{-1} \, \Crel^2 \Cstab^2 \, \dist(u_\ell^\star, u_\ell^{\k -1})^{2}
\end{align*}
Recall that $C_1 = L \, \Crel^2$ and $C_2 = 2 L \alpha^{-1} \, \Crel^2 \Cstab^2$. With this, we obtain that
\begin{align}\label{eq:lambda:ii:3}
\begin{split}
 \dist(u^\star, u_\ell^\k)^{2}
 &\reff{eq:galerkin}= (1-\eps) \, \dist(u^\star, u_\ell^\star)^{2} + \eps \, \dist(u^\star, u_\ell^\star)^{2} + \dist(u_\ell^\star, u_\ell^\k)^{2}
 \\& 
  \, \le \, (1-\eps) \, \dist(u^\star, u_\ell^\star)^{2}
   + \eps \, C_1 \, \eta_\ell(u_\ell^{\k-1})^2 
   + \eps \, C_2 \, \dist(u_\ell^\star, u_\ell^{\k -1})^{2}
   + \dist(u_\ell^\star, u_\ell^\k)^{2}
  \\& 
  \reff{axiom:energy}\le (1-\eps) \, \dist(u^\star, u_\ell^\star)^{2}
   + \eps \, C_1 \, \eta_\ell(u_\ell^{\k-1})^2 
   + ( \eps \, C_2 + \qctr^2 ) \, \dist(u_\ell^\star, u_\ell^{\k -1})^{2}.
\end{split}
\end{align}
Next, stability~\eqref{axiom:stability} and reduction~\eqref{axiom:reduction} show that
\begin{align*}
 \eta_{\ell+1}(u_\ell^\k)^2
 &= \eta_{\ell+1}(\TT_\ell \cap \TT_{\ell+1}, u_\ell^\k)^2
 + \eta_{\ell+1}(\TT_{\ell+1} \backslash \TT_\ell, u_\ell^\k)^2
 \\&
 \le \eta_{\ell}(\TT_\ell \cap \TT_{\ell+1}, u_\ell^\k)^2
 + \qred^2 \eta_{\ell}(\TT_\ell \backslash \TT_{\ell+1}, u_\ell^\k)^2
 \\&
 = \eta_\ell(u_\ell^\k)^2 - (1 - \qred^2)\, \eta_{\ell}(\TT_\ell \backslash \TT_{\ell+1}, u_\ell^\k)^2.
\end{align*}
According to the D\"orfler marking criterion in Algorithm~\ref{algorithm}(v), we are led to
\begin{align}\label{eq:lambda:ii:1}
 \eta_{\ell+1}(u_\ell^\k)^2
 \le \big( 1 - (1 - \qred^2)\,\theta^2 \big) \, \eta_\ell(u_\ell^\k)^2 =: q_\theta \, \eta_\ell(u_\ell^\k)^2.
\end{align}
Note that
\begin{align*}
 \enorm{u_\ell^\k - u_\ell^{\k -1}}^2
 \le 2 \, \big( \enorm{u_\ell^\star - u_\ell^\k}^2 + \enorm{u_\ell^\star - u_\ell^{\k -1}}^2 \big)
 &\reff{eq:energy}\le \frac{4}{\alpha} \, \big( \dist(u_\ell^\star, u_\ell^\k)^2 + \dist(u_\ell^\star, u_\ell^{\k -1})^2 \big)
 \\&
 \reff{axiom:energy}\le \frac{4}{\alpha} \, (\qctr^2 + 1) \, \dist(u_\ell^\star, u_\ell^{\k -1})^2.
\end{align*}
The Young inequality proves that
\begin{align}\label{eq:lambda:ii:2}
 \begin{split}
 \eta_\ell(u_\ell^\k)^2
 &\reff{axiom:stability}\le (1+\delta) \, \eta_\ell(u_\ell^{\k-1})^2 
   + (1+\delta^{-1}) \, \Cstab^2 \, \enorm{u_\ell^\k - u_\ell^{\k -1}}^2
 \\&
 \,\, \le\, (1+\delta) \, \eta_\ell(u_\ell^{\k-1})^2 
   + (1+\delta^{-1}) \, \frac{4}{\alpha} \, (\qctr^2 + 1) \, \Cstab^2 \, \dist(u_\ell^\star, u_\ell^{\k -1})^2.
 \end{split}
\end{align}
Let $C_4 := 4 \, \alpha^{-1} \, (\qctr^2 + 1) \,  \Cstab^2$. Note that Algorithm~\ref{algorithm} guarantees that $u_{\ell+1}^0 = u_\ell^\k$. Combining the latter estimates, we see that
\begin{align*}
 &\Lambda_{\ell+1}^0 
 = \dist(u^\star, u_{\ell+1}^0)^{2} + \mu \, \eta_{\ell+1}(u_{\ell+1}^0)^2
 \reff{eq:lambda:ii:1}\le \dist(u^\star, u_\ell^\k)^{2} + \mu \, q_\theta \, \eta_\ell(u_\ell^\k)^2
 \\& \quad
 \reff{eq:lambda:ii:3}\le (1-\eps) \, \dist(u^\star, u_\ell^\star)^{2} 
   + \eps \, C_1 \, \eta_\ell(u_\ell^{\k-1})^2 
   + (\eps \, C_2 + \qctr^2 ) \, \dist(u_\ell^\star, u_\ell^{\k -1})^{2}
   + \mu \, q_\theta \, \eta_\ell(u_\ell^\k)^2
 \\& \quad
 \reff{eq:lambda:ii:2}\le (1-\eps) \, \dist(u^\star, u_\ell^\star)^{2}
   + \big\{ \eps \, C_1 \, \mu^{-1} + q_\theta \, (1 + \delta) \big\} \, \mu \, \eta_\ell(u_\ell^{\k-1})^2 
 \\& \qquad \qquad + \big\{ \eps \, C_2 + \qctr^2 + \mu \, q_\theta \, (1 + \delta^{-1}) C_4 \big\} \, \dist(u_\ell^\star, u_\ell^{\k -1})^{2}. 
\end{align*}
Note that $C_1, C_2, C_4$ and $0 < q_\theta < 1$ depend only on the problem setting. Provided that 
\begin{align}\label{eq:prop:lambda:ii}
 \eps \, C_1 \, \mu^{-1} + q_\theta \, (1 + \delta) \le 1-\eps
 \quad \text{and} \quad
  \eps \, C_2 + \qctr^2 + \mu \, q_\theta \, (1 + \delta^{-1}) C_4 \le 1-\eps,
\end{align}
we are thus led to 
\begin{align*}
 &\Lambda_{\ell+1}^0 
 \le (1-\eps) \, \big( \dist(u^\star, u_\ell^\star)^{2} + \dist(u_\ell^\star, u_\ell^{\k-1})^{2} + \mu \, \eta_\ell(u_\ell^{\k-1})^2 \big)
 \reff{eq:galerkin}= 
 (1-\eps) \, \Lambda_\ell^{\k-1}.
\end{align*}
Up to the final choice of $\delta, \mu, \eps > 0$, this concludes the proof of Lemma~\ref{prop:lambda}(ii).
\end{proof}

\begin{proof}[Proof of Lemma~\ref{prop:lambda} (fixing the free parameters)]
We proceed as follows:
\begin{itemize}
 \item Choose $\delta > 0$ such that \ $(1+\delta) \, q_\theta < 1$.
 \item Choose $\mu > 0$ such that \ $\qctr^2 + \mu \, q_\theta(1+\delta)^{-1} \, C_4 < 1$ \ and \ $\mu C_3 + \qctr^2 < 1$.
 \item Finally, choose $\eps > 0$ sufficiently small such that~\eqref{eq:prop:lambda:i} and~\eqref{eq:prop:lambda:ii} are satisfied.
\end{itemize}
This concludes the proof of Lemma~\ref{prop:lambda} with $(1-\eps) = \qlin^2.$
\end{proof}

\begin{proof}[\textbf{Proof of Theorem~\ref{theorem:linconv} under assumption~\bf(\ref{axiom:energy})}]
According to~\eqref{eq:energy}, it holds that 
$\Delta_\ell^k \simeq (\Lambda_\ell^k)^{1/2}$,
where the hidden constants depend only on $\mu$, $\alpha$, and $L$. Then, linear convergence~\eqref{eq:theorem:linconv} follows from Lemma~\ref{prop:lambda} and induction, since the set $\QQ$ is linearly ordered with respect to the total step counter $|(\cdot,\cdot)|$.
\end{proof}

%%%%%%%%%%%%%%%%%%%%%%%%%%%%%%%%%%%%%%%%%%%%%%%%%%%%%%%%%%%%%%%%%%%%%%%%%%%%%%%%%%%
\subsection{Proof of Theorem~\ref{theorem:linconv} under assumption~(\ref{axiom:norm})}
%%%%%%%%%%%%%%%%%%%%%%%%%%%%%%%%%%%%%%%%%%%%%%%%%%%%%%%%%%%%%%%%%%%%%%%%%%%%%%%%%%%
We recall the following main result from~\cite{banach}. The proof is based on a perturbation argument. It is shown that the given constraint on $\lctr$ guarantees estimator equivalence $\eta_\ell(u_\ell^\star) \simeq \eta_\ell(u_\ell^\k)$ as well as the fact that D\"orfler marking for  $\eta_\ell(u_\ell^\k)$ and $\theta$ implies D\"orfler marking for $\eta_\ell(u_\ell^\star)$ and $\theta^\star := (\theta - \lctr/\lconv)/(1 + \lctr/\lconv) > 0$.

\begin{lemma}[{{\cite[Lemma~4.9, Theorem~5.3]{banach}}}]
Suppose \eqref{axiom:stability}--\eqref{axiom:reliability} and \eqref{axiom:norm}.
Let $0 < \theta \leq 1$ and $0 < \lctr < \lconv \theta$, where $\lconv = \frac{1-\qctr}{\Cstab\qctr}$. Then, 
it holds that
\begin{align}\label{eq:eta-star}
 (1 - \lctr/\lconv) \, \eta_\ell(u_\ell^\k) 
 \le \eta_\ell(u_\ell^\star) 
 \le (1 + \lctr/\lconv) \, \eta_\ell(u_\ell^\k).
\end{align}
Moreover, there exist $\Cghps > 0$ and $0 < \qghps < 1$ such that
\begin{align}\label{eq:linearconvergence}
 \eta_{\ell + n}(u_{\ell+n}^\k) 
 \le \Cghps \, \qghps^n \, \eta_\ell(u_\ell^\k)
 \quad \text{for all } (\ell + n +1, 0)\in\QQ.
\end{align}
The constants $\Cghps$ and $\qghps$ depend only on $\Ccea=L/\alpha$, $\Crel$, $\Cstab$, $\qred$, and $\qctr$, as well as on the adaptivity parameters $\theta$ and $\lctr$. \qed
\end{lemma}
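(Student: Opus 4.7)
The plan is to deploy the perturbation strategy behind Lemma~4.9 and Theorem~5.3 of~\cite{banach}. For the estimator equivalence~\eqref{eq:eta-star}, I would start from the stopping criterion in Algorithm~\ref{algorithm}(iv), namely $\enorm{u_\ell^\k - u_\ell^{\k-1}} \le \lctr\,\eta_\ell(u_\ell^\k)$. Combining Lemma~\ref{lemma:lambda}(iii) (valid under~\eqref{axiom:norm}) with the definition $\lconv = (1-\qctr)/(\Cstab\qctr)$ then yields
\[
\enorm{u_\ell^\star - u_\ell^\k} \le \tfrac{\qctr}{1-\qctr}\,\enorm{u_\ell^\k - u_\ell^{\k-1}} \le \tfrac{\lctr}{\Cstab\,\lconv}\,\eta_\ell(u_\ell^\k).
\]
Stability~\eqref{axiom:stability} with $\UU_\coarse = \TT_\ell$ and the reverse triangle inequality then give $|\eta_\ell(u_\ell^\star) - \eta_\ell(u_\ell^\k)| \le \Cstab\,\enorm{u_\ell^\star - u_\ell^\k} \le (\lctr/\lconv)\,\eta_\ell(u_\ell^\k)$, which rearranges precisely to~\eqref{eq:eta-star}.

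For~\eqref{eq:linearconvergence}, I would factor the argument through a D\"orfler-transfer step and the standard abstract contraction result for the (never-computed) exact Galerkin iterates $u_\ell^\star$. The same perturbation bound, applied to the marked set $\MM_\ell$ from Algorithm~\ref{algorithm}(v) via~\eqref{axiom:stability}, gives
\[
\eta_\ell(\MM_\ell, u_\ell^\star) \ge \eta_\ell(\MM_\ell, u_\ell^\k) - \Cstab\,\enorm{u_\ell^\star - u_\ell^\k} \ge (\theta - \lctr/\lconv)\,\eta_\ell(u_\ell^\k) \ge \theta^\star\,\eta_\ell(u_\ell^\star),
\]
where $\theta^\star := (\theta - \lctr/\lconv)/(1 + \lctr/\lconv) > 0$ by virtue of $\lctr < \lconv\,\theta$. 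Thus mesh-refinement in Algorithm~\ref{algorithm} is, along the subsequence $(\ell,\k(\ell))$ of mesh-changing steps, equivalent to D\"orfler marking at the effective parameter $\theta^\star$ applied to the exact Galerkin estimators.

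I would then invoke the standard abstract contraction result from the axioms-of-adaptivity framework~\cite{axioms}: under~\eqref{axiom:stability}--\eqref{axiom:reliability}, the C\'ea estimate~\eqref{eq:cea}, and D\"orfler marking with parameter $\theta^\star$, the weighted quasi-error $\eta_\ell(u_\ell^\star)^2 + \gamma\,\enorm{u^\star - u_\ell^\star}^2$ contracts uniformly for a suitable $\gamma > 0$, providing constants $C_0 > 0$ and $0 < q_0 < 1$ with $\eta_{\ell+n}(u_{\ell+n}^\star) \le C_0\,q_0^n\,\eta_\ell(u_\ell^\star)$ for all $(\ell+n+1,0)\in\QQ$. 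Applying~\eqref{eq:eta-star} at both ends of this inequality,
\[
\eta_{\ell+n}(u_{\ell+n}^\k) \le \tfrac{1}{1 - \lctr/\lconv}\,\eta_{\ell+n}(u_{\ell+n}^\star) \le \tfrac{(1+\lctr/\lconv)\,C_0}{1-\lctr/\lconv}\,q_0^n\,\eta_\ell(u_\ell^\k),
\]
which is~\eqref{eq:linearconvergence} with $\Cghps := (1+\lctr/\lconv)(1-\lctr/\lconv)^{-1}C_0$ and $\qghps := q_0$.

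The only genuinely nontrivial input is the abstract contraction for $u_\ell^\star$ under D\"orfler marking; it relies on combining the estimator reduction from~\eqref{axiom:stability}--\eqref{axiom:reduction} with~\eqref{eq:cea} and~\eqref{axiom:reliability} through a Young-inequality bookkeeping to turn bulk chasing into contraction of a weighted quasi-error, but this is exactly the mechanism of~\cite{ckns2008,axioms}. Everything else in the argument is routine manipulation of the stopping criterion, the contraction~\eqref{axiom:norm}, and a single application of stability, so the proposition reduces to a concise perturbation around the classical exact-solution theory.
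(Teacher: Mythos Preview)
Your proposal is correct and follows exactly the perturbation strategy the paper attributes to~\cite{banach}: the estimator equivalence via the stopping criterion and stability, the D\"orfler transfer with effective parameter $\theta^\star = (\theta - \lctr/\lconv)/(1+\lctr/\lconv)$, and the abstract contraction for exact Galerkin solutions from~\cite{axioms}. The only minor slip is that the bound $\enorm{u_\ell^\star - u_\ell^\k} \le \tfrac{\qctr}{1-\qctr}\,\enorm{u_\ell^\k - u_\ell^{\k-1}}$ requires both parts~(i) and~(iii) of Lemma~\ref{lemma:lambda}, not~(iii) alone.
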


In the present case, the core of the proof is the following summability result.

\begin{lemma}\label{lemma:linconv:sum}
Suppose \eqref{axiom:stability}--\eqref{axiom:reliability} and \eqref{axiom:norm}. 
Let $0 < \theta \leq 1$ and $0 < \lctr < \lconv \theta$, where again $\lconv = \frac{1-\qctr}{\qctr\Cstab}$. Then, there exists $\Csum > 0$ such that
\begin{align}\label{eq:ell-k}
 \sum_{\substack{(\ell,k) \in \QQ \\ (\ell,k) > (\ell',k')}} \Delta_\ell^k
 \le \Csum \, \Delta_{\ell'}^{k'}\quad\textrm{for all }(\ell',k') \in \QQ.
\end{align}
The constant $\Csum$ depends only on $L$, $\alpha$, $\Crel$, $\Cstab$, $\qred$, and $\qctr$, as well as on the adaptivity parameters $\theta$ and $\lctr$.
\end{lemma}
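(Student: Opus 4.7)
The plan is to split the tail sum $T := \sum_{(\ell,k)>(\ell',k')}\Delta_\ell^k$ into an inner contribution at the fixed mesh $\TT_{\ell'}$ (indices $k>k'$) and an outer contribution over all subsequent refinements $\ell>\ell'$; the inner part will be geometric by the solver contraction~\eqref{axiom:norm}, the outer part will be geometric by the linear convergence of stopping estimators~\eqref{eq:linearconvergence} quoted from~\cite{banach}. At the end, the C\'ea lemma~\eqref{eq:cea} will convert the discrete-solver error into the quasi-error $\Delta_{\ell'}^{k'}$ without any absorption argument.

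For the inner sum at a fixed mesh $\TT_\ell$ and any base index $k_0 \ge 0$: every non-stopping iterate $(\ell,k) \in \QQ$ with $k>k_0$ satisfies $\eta_\ell(u_\ell^k) < \lctr^{-1}\enorm{u_\ell^k - u_\ell^{k-1}}$ by~\eqref{eq2:stopping:linearization}, so Proposition~\ref{lemma:reliable} gives $\Delta_\ell^k \le \Crel'(1+\lctr^{-1})\enorm{u_\ell^k - u_\ell^{k-1}}$. Lemma~\ref{lemma:lambda}(ii) together with the norm contraction~\eqref{axiom:norm} produces $\enorm{u_\ell^k - u_\ell^{k-1}} \le (1+\qctr)\qctr^{k-k_0-1}\enorm{u_\ell^\star - u_\ell^{k_0}}$, so summing the geometric series yields
\begin{equation*}
 \sum_{\substack{(\ell,k) \in \QQ \\ k > k_0}} \Delta_\ell^k \,\lesssim\, \enorm{u_\ell^\star - u_\ell^{k_0}}.
\end{equation*}

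For the transition between mesh levels: using $u_\ell^0 = u_{\ell-1}^\k$, discrete reliability~\eqref{axiom:discrete_reliability} for $\enorm{u_\ell^\star - u_{\ell-1}^\star}$, Lemma~\ref{lemma:lambda}(iii) combined with the stopping criterion at level $\ell-1$ for $\enorm{u_{\ell-1}^\star - u_{\ell-1}^\k} \lesssim \eta_{\ell-1}(u_{\ell-1}^\k)$, and the estimator equivalence~\eqref{eq:eta-star} together yield $\enorm{u_\ell^\star - u_\ell^0} \lesssim \eta_{\ell-1}(u_{\ell-1}^\k)$ for every $\ell \ge 1$. Combined with the inner bound for $k_0=0$ and with $\Delta_\ell^0 \lesssim \eta_{\ell-1}(u_{\ell-1}^\k)$ from Proposition~\ref{lemma:reliable}, this gives $\sum_{(\ell,k)\in\QQ}\Delta_\ell^k \lesssim \eta_{\ell-1}(u_{\ell-1}^\k)$ for each $\ell>\ell'$; summing over $\ell$ and invoking the geometric decay~\eqref{eq:linearconvergence} collapses the outer contribution to $\lesssim \eta_{\ell'}(u_{\ell'}^\k)$.

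Combining the inner sum at level $\ell'$ (with $k_0=k'$) with the outer sum yields $T \lesssim \enorm{u_{\ell'}^\star - u_{\ell'}^{k'}} + \eta_{\ell'}(u_{\ell'}^\k)$. Stability~\eqref{axiom:stability} together with the same telescoped-increment argument of the inner step gives $\eta_{\ell'}(u_{\ell'}^\k) \le \eta_{\ell'}(u_{\ell'}^{k'}) + \Cstab\frac{1+\qctr}{1-\qctr}\enorm{u_{\ell'}^\star - u_{\ell'}^{k'}}$, so $T \lesssim \eta_{\ell'}(u_{\ell'}^{k'}) + \enorm{u_{\ell'}^\star - u_{\ell'}^{k'}}$. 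The C\'ea lemma~\eqref{eq:cea} and the triangle inequality then give $\enorm{u_{\ell'}^\star - u_{\ell'}^{k'}} \le (1+\Ccea)\enorm{u^\star - u_{\ell'}^{k'}} \le (1+\Ccea)\Delta_{\ell'}^{k'}$, which closes the argument. The hard part will be the mesh-transition estimate $\enorm{u_\ell^\star - u_\ell^0} \lesssim \eta_{\ell-1}(u_{\ell-1}^\k)$, since it requires juggling four separate ingredients (discrete reliability, the stopping criterion, Lemma~\ref{lemma:lambda}(iii), and the nontrivial estimator equivalence~\eqref{eq:eta-star} borrowed from~\cite{banach}) to eliminate the new-mesh Galerkin solution $u_\ell^\star$ in favour of the old-level stopping estimator; the outer telescoping then hinges on the already-established geometric decay~\eqref{eq:linearconvergence} of stopping estimators, which is the key external input.
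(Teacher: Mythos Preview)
Your overall strategy matches the paper's: an inner/outer decomposition, geometric series from~\eqref{axiom:norm} for the inner part, the quoted linear decay~\eqref{eq:linearconvergence} for the outer part, and closure via the C\'ea lemma. There is, however, one genuine gap.

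Your mesh-transition estimate $\enorm{u_\ell^\star - u_\ell^0} \lesssim \eta_{\ell-1}(u_{\ell-1}^\k)$ invokes discrete reliability~\eqref{axiom:discrete_reliability}, but the lemma is stated under~\eqref{axiom:stability}--\eqref{axiom:reliability} only; (A4) is \emph{not} among the hypotheses. The paper avoids (A4) here by routing through the continuous solution $u^\star$ rather than the coarse Galerkin solution $u_{\ell-1}^\star$: with $u_\ell^0 = u_{\ell-1}^\k \in \XX_{\ell-1} \subseteq \XX_\ell$, the C\'ea lemma~\eqref{eq:cea} gives
\[
 \enorm{u_\ell^\star - u_{\ell-1}^\k}
 \le \enorm{u^\star - u_\ell^\star} + \enorm{u^\star - u_{\ell-1}^\k}
 \le (1+\Ccea)\,\enorm{u^\star - u_{\ell-1}^\k},
\]
and then Proposition~\ref{lemma:reliable} (case $k=\k$) yields $\enorm{u^\star - u_{\ell-1}^\k} \lesssim \eta_{\ell-1}(u_{\ell-1}^\k)$. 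This uses only~\eqref{axiom:stability}--\eqref{axiom:reliability} and, incidentally, makes your detour through the equivalence~\eqref{eq:eta-star} unnecessary at this point. With this single replacement --- which you already have available, since you use C\'ea in your final step --- the rest of your argument goes through and is essentially the paper's proof (the paper additionally splits into the cases $\underline\ell=\infty$, $\ell'=\underline\ell<\infty$, and $\ell'<\underline\ell<\infty$, but your formulation covers these uniformly).
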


\begin{proof}
\def\boxed#1{#1}
The proof is split into six steps. 

{\bf Step~1.}  This step provides an equivalent quasi-error quantity. 
First, note that
\begin{align*}
 \enorm{u^\star - u_\ell^k}
 \le \enorm{u^\star - u_\ell^\star} + \enorm{u_\ell^\star - u_\ell^k}
 \reff{axiom:reliability}\lesssim \eta_\ell(u_\ell^\star) + \enorm{u_\ell^\star - u_\ell^k}
 \reff{axiom:stability}\lesssim \eta_\ell(u_\ell^k) + \enorm{u_\ell^\star - u_\ell^k}
 =: \Alpha_\ell^k.
\end{align*}
This proves that $\Delta_\ell^k = \enorm{u^\star - u_\ell^k} + \eta_\ell(u_\ell^k) \lesssim \Alpha_\ell^k$.
Second, the C\'ea lemma~\eqref{eq:cea} proves that 
\begin{align*}
 \enorm{u_\ell^\star - u_\ell^k}
 \le  \enorm{u^\star - u_\ell^\star} + \enorm{u^\star - u_\ell^k} 
 \reff{eq:cea}\lesssim \enorm{u^\star - u_\ell^k}.
\end{align*}
This concludes that 
 \begin{align}\label{eq:aux1}
  \boxed{\Alpha_\ell^k = \enorm{u_\ell^\star - u_\ell^k} + \eta_\ell(u_\ell^k)
  \simeq \Delta_\ell^k.}
 \end{align}

{\bf Step~2.} This step collects some auxiliary estimates. We start with
\begin{align}\label{eq1:ell-k}
 \boxed{\Alpha_\ell^0 \lesssim \eta_{\ell-1}(u_{\ell-1}^\k) \le \Alpha_{\ell-1}^\k 
 \quad \text{for all } (\ell, 0) \in \QQ \text{ with } \ell > 0.}
\end{align}
With the C\'ea lemma~\eqref{eq:cea} and reliability~\eqref{eq:lemma:reliable}, it follows that
\begin{align*} 
 & \enorm{u_\ell^\star - u_{\ell-1}^\k}
 \le \enorm{u^\star - u_\ell^\star} + \enorm{u^\star - u_{\ell-1}^\k}
 \reff{eq:cea}\lesssim \enorm{u^\star - u_{\ell-1}^\k}
 \reff{eq:lemma:reliable}\lesssim \eta_{\ell-1}(u_{\ell-1}^\k)
\end{align*}
With nested iteration $u_\ell^0 = u_{\ell-1}^\k$ and~\eqref{axiom:stability}--\eqref{axiom:reduction}, we thus obtain that
\begin{align*}
 \Alpha_\ell^0 = \enorm{u_\ell^\star - u_\ell^0} + \eta_\ell(u_\ell^0)
 = \enorm{u_\ell^\star - u_{\ell-1}^\k} + \eta_\ell(u_{\ell-1}^\k)
 \lesssim 
 \eta_{\ell-1}(u_{\ell-1}^\k) \le \Alpha_{\ell-1}^\k
\end{align*}
This proves~\eqref{eq1:ell-k}. Next, we prove that
\begin{align}\label{eq2:ell-k}
 \boxed{\Alpha_\ell^\k \lesssim \Alpha_\ell^k
 \quad \text{for all } (\ell+1, 0) \in \QQ
 \text{ and } 0 \le k \le \k(\ell).}
\end{align}
To see this, note that
\begin{align*}
 \enorm{u_\ell^\k - u_\ell^k} 
 \le \enorm{u_\ell^\star - u_\ell^\k} + \enorm{u_\ell^\star - u_\ell^k}
 \reff{axiom:norm}\le (\qctr^{\k-k}\ + 1) \, \enorm{u_\ell^\star - u_\ell^k}.
\end{align*}
Hence, it follows that
\begin{align*}
 \Alpha_\ell^\k = \enorm{u_\ell^\star \!-\! u_\ell^\k} + \eta_\ell(u_\ell^\k)
 &\reff{axiom:stability}\lesssim  \enorm{u_\ell^\star \!-\! u_\ell^k} + \enorm{u_\ell^\k \!-\! u_\ell^k} +\eta_\ell(u_\ell^k)
 \,\,\lesssim\,\, \enorm{u_\ell^\star - u_\ell^k} + \eta_\ell(u_\ell^k)
 = \Alpha_\ell^k.
\end{align*}
This proves~\eqref{eq2:ell-k}. Finally, we use the stopping criterion of Algorithm \ref{algorithm}(iv) to prove that
\begin{align}\label{eq3:ell-k}
 \boxed{\Alpha_\ell^k \lesssim \enorm{u_\ell^\star - u_\ell^{k-1}}
 \quad \text{for all } (\ell,k) \in \QQ \text{ with } k > 0.}
\end{align}
With the stopping criterion~\eqref{eq2:stopping:linearization} and Lemma~\ref{lemma:lambda}(ii), we get that
\begin{align*}
 \eta_\ell(u_\ell^k)
\reff{eq2:stopping:linearization}\lesssim \enorm{u_\ell^k - u_\ell^{k-1}}
 \stackrel{\rm(ii)}\lesssim \enorm{u_\ell^\star - u_\ell^{k-1}}.
\end{align*}
This leads to
\begin{align*}
 \Alpha_\ell^k = \enorm{u_\ell^\star - u_\ell^k} + \eta_\ell(u_\ell^k)
 \reff{axiom:norm}\lesssim \enorm{u_\ell^\star - u_\ell^{k-1}}
\end{align*}
and thus proves~\eqref{eq3:ell-k}.

{\bf Step~3.} Suppose that $\underline\ell = \infty$ and hence $\k(\ell) < \infty$ for all $\ell \in \N_0$. Note that
\begin{align*}
 &\sum_{\substack{(\ell,k) \in \QQ \\ (\ell,k) > (\ell',k')}} \Alpha_\ell^k
 = \sum_{\ell = \ell' + 1}^\infty \sum_{k = 0}^{\k(\ell)-1} \Alpha_\ell^k
 + \sum_{k = k' + 1}^{\k(\ell')-1} \Alpha_{\ell'}^k
 \reff{eq1:ell-k}\lesssim \sum_{\ell = \ell' + 1}^\infty \sum_{k = 1}^{\k(\ell)} \Alpha_\ell^k
 + \sum_{k = k' + 1}^{\k(\ell')} \Alpha_{\ell'}^k.
\end{align*}
With contraction~\eqref{axiom:norm}, the geometric series proves that
\begin{align}\label{eq:sum:k}
 \sum_{k = i + 1}^{\k(\ell)-1} \Alpha_\ell^k
 \reff{eq3:ell-k}\lesssim \sum_{k = i + 1}^{\k(\ell)-1} \enorm{u_\ell^\star - u_\ell^{k-1}}
 \reff{axiom:norm}\le \enorm{u_\ell^\star - u_\ell^i} \, \sum_{k = i}^{\infty} \qctr^{k-i}
\lesssim \Alpha_\ell^i
 \quad \text{for all } (\ell,i) \in \QQ.
\end{align}
Hence, it follows that
\begin{align*}
 \sum_{k = 1}^{\k(\ell)}A_\ell^k 
 \quad
 \begin{cases}
 \quad\,=\, \Alpha_\ell^1 \reff{eq2:ell-k} \lesssim A_\ell^0
 \quad& \text{ if } \k(\ell) = 1,\\
 \quad\displaystyle\reff{eq2:ell-k}\lesssim  \sum_{k = 1}^{\k(\ell)-1} \Alpha_\ell^k
 \reff{eq:sum:k}\lesssim A_\ell^0
 \quad& \text{ if } \k(\ell) > 1.
 \end{cases}
\end{align*}
Moreover, it follows that
\begin{align*}
 \sum_{k=k'+1}^{\k(\ell')} A_{\ell'}^k 
 \quad
 \begin{cases}
 \quad\,=\, A_{\ell'}^{k'+1} \reff{eq2:ell-k} \lesssim A_{\ell'}^{k'}
 \quad& \text{ if } \k(\ell') = k' + 1,\\
 \quad\displaystyle \reff{eq2:ell-k} \lesssim \sum_{k = k' + 1}^{\k(\ell')-1} \Alpha_{\ell'}^k
 \reff{eq:sum:k} \lesssim A_{\ell'}^{k'}
 \quad& \text{ if } \k(\ell') > k' + 1.
 \end{cases}
\end{align*}
So far, this proves that
\begin{align*}
 \sum_{\substack{(\ell,k) \in \QQ \\ (\ell,k) > (\ell',k')}} \Alpha_\ell^k
 \lesssim \Alpha_{\ell'}^{k'} + \sum_{\ell = \ell' + 1}^\infty \Alpha_\ell^0.
\end{align*}
Exploiting the linear convergence~\eqref{eq:linearconvergence} together with the geometric series, we prove that
\begin{align*}
 \sum_{\ell = \ell' + 1}^\infty \Alpha_\ell^0
 \reff{eq1:ell-k}\lesssim \sum_{\ell = \ell' + 1}^\infty \eta_{\ell-1}(u_{\ell-1}^\k)
 = \sum_{\ell = \ell'}^\infty \eta_{\ell}(u_{\ell}^\k)
 \reff{eq:linearconvergence}\lesssim \eta_{\ell'}(u_{\ell'}^\k) \sum_{\ell = \ell'}^\infty \qghps^{\ell-\ell'}
 \simeq \eta_{\ell'}(u_{\ell'}^\k)
 \le \Alpha_{\ell'}^\k.
\end{align*}
Overall, this proves that
\begin{align}
\label{eq:sum Alk1}
&\sum_{\substack{(\ell,k) \in \QQ \\ (\ell,k) > (\ell',k')}}  \Alpha_\ell^k
\lesssim \Alpha_{\ell'}^{k'}+ \Alpha_{\ell'}^\k  
 \reff{eq2:ell-k}\simeq \Alpha_{\ell'}^{k'}
 \quad \text{provided that } \underline\ell=\infty.
\end{align}

{\bf Step~4.} Suppose that $\ell' = \underline\ell < \infty$ and hence  $\k(\ell') = \k(\underline\ell) = \infty$.
Then, the geometric series proves that
\begin{align}\label{eq:sum Alk2}
 \sum_{\substack{(\ell,k) \in \QQ \\ (\ell,k) > (\ell',k')}}  \Alpha_\ell^k
 = \sum_{k = k'+1}^{\infty} \Alpha_{\ell'}^k
 \reff{eq:sum:k}\lesssim 
 \Alpha_{\ell'}^{k'}.
\end{align}

{\bf Step~5.} Suppose that $\ell' < \underline\ell < \infty$ and hence $\k(\underline\ell) = \infty$. Then, it holds that
\begin{align*}
\sum_{\substack{(\ell,k) \in \QQ \\ (\ell,k) > (\ell',k')}}  \Alpha_\ell^k
& = \sum_{\ell=\ell'+1}^{\underline\ell-1}\sum_{k=0}^{\k(\ell)-1}\Alpha_\ell^k + \sum_{k = k'+1}^{\k(\ell')}-1 \Alpha_{\ell'}^k + \sum_{k=0}^\infty\Alpha_{\underline\ell}^k.
\end{align*}
First, note that
\begin{align*}
\sum_{k=0}^\infty\Alpha_{\underline\ell}^k 
= \Alpha_{\underline\ell}^0 + \sum_{k=1}^\infty\Alpha_{\underline\ell}^k 
\reff{eq:sum:k}\leq \Alpha_{\underline\ell}^0
\reff{eq1:ell-k}\lesssim\Alpha_{\underline\ell-1}^\k.
\end{align*}
Provided that $\ell' < \underline\ell < \infty$, it hence holds that
\begin{align*}
&\sum_{\substack{(\ell,k) \in \QQ \\ (\ell,k) > (\ell',k')}}  \Alpha_\ell^k
\lesssim \sum_{\ell=\ell'+1}^{\underline\ell-1}\sum_{k=0}^{\k(\ell)}\Alpha_\ell^k + \sum_{k = k'+1}^{\k(\ell')-1} \Alpha_{\ell'}^k
\reff{eq1:ell-k}\lesssim \sum_{\ell=\ell'+1}^{\underline\ell-1}\sum_{k=1}^{\k(\ell)}\Alpha_\ell^k + \sum_{k = k'+1}^{\k(\ell')} \Alpha_{\ell'}^k.
\end{align*}
Along the lines of Step~3, one concludes that 
\begin{align}\label{eq:sum Alk3}
 \sum_{\ell=\ell'+1}^{\underline\ell-1}\sum_{k=1}^{\k(\ell)}\Alpha_\ell^k + \sum_{k = k'+1}^{\k(\ell')} \Alpha_{\ell'}^k
\lesssim \Alpha_{\ell'}^{k'}.
\end{align}

{\bf Step~6.} In any case, \eqref{eq:sum Alk1}--\eqref{eq:sum Alk3} prove that
$$
 \sum_{\substack{(\ell,k) \in \QQ \\ (\ell,k) > (\ell',k')}} \Delta_\ell^k 
 \simeq \sum_{\substack{(\ell,k) \in \QQ \\ (\ell,k) > (\ell',k')}} \Alpha_\ell^k 
 \lesssim \Alpha_{\ell'}^{k'}
 \simeq \Delta_{\ell'}^{k'}
 \quad\textrm{for all }(\ell',k') \in \QQ.
$$
This concludes the proof of~\eqref{eq:ell-k}.
\end{proof}

\begin{proof}[{\bfseries Proof of Theorem~\ref{theorem:linconv} under the assumption~\bf(\ref{axiom:norm})}]

From \cite[Lemma~4.9]{axioms}, we recall the following implication for sequences $(\alpha_n)_{n \in \N_0}$ in $\R_{\ge0}$ and constants $C > 0$:
\begin{align*}
 \bigg[\forall N \in \N_0: \, \sum_{n = N + 1}^\infty \alpha_n \le C \, \alpha_N\bigg]
 \,\,\, \Longrightarrow \,\,\,
 \bigg[\forall N,m \in \N_0: \, \alpha_{N+m} \le (1 + C) (1 + C^{-1})^{-m} \alpha_N \bigg].
\end{align*}%
Since the index set $\QQ$ is linearly ordered with respect to the total step counter $|(\cdot,\cdot)|$, Lemma~\ref{lemma:linconv:sum} 
implies that
\begin{align*}
\Delta_{\ell^\prime}^{k^\prime}\leq\Clin\,\qlin^{|(\ell^\prime,k^\prime)|-|(\ell,k)|}\,\Delta_\ell^k\quad\textrm{for all }(\ell,k),(\ell^\prime,k^\prime)\in\QQ\textrm{ with }(\ell^\prime,k^\prime) > (\ell,k),
\end{align*}
where $\Clin = 1 + \Csum$ and $\qlin = 1 / (1 + \Csum^{-1})$. 
\end{proof}

%%%%%%%%%%%%%%%%%%%%%%%%%%%%%%%%%%%%%%%%%%%%%%%%%%%%%%%%%%%%%%%%%%%%%%%%%%%%%%%%%%%
%%%%%%%%%%%%%%%%%%%%%%%%%%%%%%%%%%%%%%%%%%%%%%%%%%%%%%%%%%%%%%%%%%%%%%%%%%%%%%%%%%%
\section{Proof of Theorem~\ref{theorem:rates} (optimal convergence rates)}
\label{section:costs}
%%%%%%%%%%%%%%%%%%%%%%%%%%%%%%%%%%%%%%%%%%%%%%%%%%%%%%%%%%%%%%%%%%%%%%%%%%%%%%%%%%%
%%%%%%%%%%%%%%%%%%%%%%%%%%%%%%%%%%%%%%%%%%%%%%%%%%%%%%%%%%%%%%%%%%%%%%%%%%%%%%%%%%%

Recall $\norm{u^\star}{\A_s}$ from \eqref{eq:As} and $\T(N)=\set{\TT\in\refine(\TT_0}{\#\TT-\#\TT_0\le N}$.
The following lemma proves the first inequality in \eqref{eq:optimal}.
\begin{lemma}\label{lemma:opt:lower}
Suppose~\eqref{axiom:sons} as well as~\eqref{axiom:stability}--\eqref{axiom:reliability}.
Let $s > 0$. 
Then, it holds that 
\begin{align}
 \norm{u^\star}{\A_s}
 \le \copt \, \sup_{(\ell,k) \in \QQ} (\#\TT_\ell - \#\TT_0 + 1)^s \Delta_\ell^k,
\end{align}
where  $\copt > 0$ depends only on $\Ccea=L/\alpha$, $\Cson$, $\Cstab$, $\Crel$, $\#\TT_0$, and $s$, 
and, if $\underline\ell<\infty$ or $\eta_{\ell_0}(u_{\ell_0}^{\underline k})=0$ for some $(\ell_0+1,0)\in\QQ$, additionally on $\underline\ell$ resp.\ $\ell_0$.
\end{lemma}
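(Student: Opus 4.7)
The plan is to bound, for every $N \in \N_0$, the quantity $(N+1)^s \inf_{\TT \in \T(N)}(\enorm{u^\star - u_\TT^\star} + \eta_\TT(u_\TT^\star))$ by a constant multiple of the supremum on the right-hand side of the claim. As a competitor for the infimum I would use the algorithm's own meshes: define $\ell^*(N) := \max\set{\ell \in \N_0}{\#\TT_\ell - \#\TT_0 \le N}$ and propose $\TT_{\rm opt} := \TT_{\ell^*(N)}$. The argument then rests on two elementary ingredients which are combined and finally complemented by the analysis of a degenerate case.

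First, a \emph{comparison estimate} between the exact quasi-error at level $\ell$ and the computed quantity $\Delta_\ell^k$. For any $(\ell,k) \in \QQ$, stability~\eqref{axiom:stability} gives $\eta_\ell(u_\ell^\star) \le \eta_\ell(u_\ell^k) + \Cstab \enorm{u_\ell^\star - u_\ell^k}$, while the C\'ea lemma~\eqref{eq:cea} and the triangle inequality yield $\enorm{u^\star - u_\ell^\star} \le \Ccea \enorm{u^\star - u_\ell^k}$ and $\enorm{u_\ell^\star - u_\ell^k} \le (1+\Ccea) \enorm{u^\star - u_\ell^k}$. Summing these estimates produces
\[
\enorm{u^\star - u_\ell^\star} + \eta_\ell(u_\ell^\star) \le C_1 \, \Delta_\ell^k
\]
with $C_1 = C_1(\Ccea,\Cstab)$. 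Second, a \emph{sandwich on mesh sizes}: provided $\TT_{\ell^*+1}$ exists with $\#\TT_{\ell^*+1} > \#\TT_{\ell^*}$, maximality of $\ell^*$ gives $N+1 \le \#\TT_{\ell^*+1} - \#\TT_0 + 1$, and the splitting property~\eqref{axiom:sons} yields $\#\TT_{\ell^*+1} \le \Cson \#\TT_{\ell^*}$, so
\[
N + 1 \le \Cson \, \#\TT_0 \, (\#\TT_{\ell^*} - \#\TT_0 + 1),
\]
hence $(N+1)^s \le C_2 \, (\#\TT_{\ell^*} - \#\TT_0 + 1)^s$ with $C_2 = C_2(\Cson,\#\TT_0,s)$. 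Picking any $(\ell^*,k) \in \QQ$ (for instance $k = 0$, which belongs to $\QQ$ whenever $\ell^* \le \underline\ell$) and multiplying the two bounds yields
\[
(N+1)^s \, (\enorm{u^\star - u^\star_{\ell^*}} + \eta_{\ell^*}(u^\star_{\ell^*})) \le C_1 C_2 \, (\#\TT_{\ell^*} - \#\TT_0 + 1)^s \, \Delta_{\ell^*}^k,
\]
which is bounded by the supremum in the claim. Taking the supremum over $N$ concludes the non-degenerate case.

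The sandwich breaks down precisely when the mesh sequence stagnates, namely when $\underline\ell < \infty$ and $\ell^* = \underline\ell$ (no $\TT_{\ell^*+1}$ available), or when $\eta_{\ell_0}(u_{\ell_0}^{\underline k}) = 0$ forces the D\"orfler step to return $\MM_{\ell_0} = \emptyset$ and hence $\TT_{\ell_0+1} = \TT_{\ell_0}$. In both situations, the stopping criterion~(iv) of Algorithm~\ref{algorithm} combined with the contraction property~\eqref{axiom:energy} or~\eqref{axiom:norm} forces $u_{\ell_0}^{\underline k} = u_{\ell_0}^\star$; stability~\eqref{axiom:stability} then yields $\eta_{\ell_0}(u_{\ell_0}^\star) = 0$ and reliability~\eqref{axiom:reliability} (alternatively Corollary~\ref{cor:linconv} when $\underline\ell < \infty$) gives $u^\star = u_{\ell_0}^\star$. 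Consequently $\TT_{\ell_0} \in \T(N)$ attains zero infimum for every $N \ge \#\TT_{\ell_0} - \#\TT_0$, while the remaining $N < \#\TT_{\ell_0} - \#\TT_0$ are still handled by the sandwich at levels $\ell^* < \ell_0$, at the price of absorbing the bound $(N+1)^s \le (\#\TT_{\ell_0} - \#\TT_0 + 1)^s$ into the constant. This is what produces the additional dependence of $\copt$ on $\underline\ell$ resp.\ $\ell_0$ announced in the statement.

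The main obstacle is exactly this degenerate-case bookkeeping. The non-degenerate core is a routine combination of stability, C\'ea, and the splitting property, but one must rigorously verify the implication ``$\eta_{\ell_0}(u_{\ell_0}^{\underline k}) = 0 \Rightarrow u^\star = u_{\ell_0}^\star$'' under both contraction hypotheses and argue consistently for the possibly infinitely many indices in $\QQ$ that sit above the stagnation level $\ell_0$ without contributing to the supremum; with that in place the entire estimate closes immediately.
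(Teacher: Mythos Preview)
Your proposal is correct and follows essentially the same route as the paper: the comparison of the exact quasi-error with $\Delta_\ell^k$ via C\'ea and stability (the paper's Step~3), the mesh-size sandwich via~\eqref{axiom:sons} (Step~2), and separate treatment of the degenerate case where the mesh sequence stagnates (Step~1). One small remark on your final paragraph: your stated source of the $\ell_0$-dependence is not quite accurate---once you know the infimum vanishes for $N \ge \#\TT_{\ell_0}-\#\TT_0$ and the sandwich applies for smaller $N$ (with $\ell^*+1 \le \ell_0$ available and $\#\TT_{\ell^*+1}>\#\TT_{\ell^*}$ by minimality of $\ell_0$), the resulting constant is actually uniform, so your treatment is in fact sharper than the paper's Step~1, which crudely bounds $N+1 \le (\Cson^{\ell'}-1)\#\TT_0$ and reduces everything to the level-$0$ quantity, thereby producing the stated $\ell_0$-dependence.
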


\begin{proof}
The proof is split into three steps.
First, we recall from~\cite[Lemma~22]{bhp2017} that
\begin{align}\label{eq:TT0}
 \#\TT_\fine / \#\TT_\coarse
 \le \#\TT_\fine - \#\TT_\coarse + 1 
 \le \#\TT_\fine
 \quad \text{for all } \TT_\coarse \in \T \text{ and all } \TT_\fine \in \refine(\TT_\coarse).
\end{align}

{\bf Step~1.}
In this step, we consider the pathological cases that $\underline\ell < \infty$ or $\eta_{\ell_0}(u_{\ell_0}^{\underline k})=0$ for some $(\ell_0+1,0)\in\QQ$.
In the first case, Corollary~\ref{cor:linconv} gives that $u^\star=u_{\underline\ell}^\star$ as well as $\eta_{\underline\ell}(u_{\underline\ell}^\star)=0$.
Note that the latter implies that $u_{\ell_0}^{\underline k}=u^\star=u_{\ell_0}^\star$ due to Proposition~\ref{lemma:reliable} and \eqref{eq:cea}. 
Hence, with $\ell':=\underline \ell$ resp.\ $\ell':=\ell_0$, we obtain that 
\begin{align*}
\norm{u^\star}{\A_s} = \max_{0 \le N < \#\TT_{\ell'}-\#\TT_0}(N+1)^s \min_{\TT_\coarse \in \T(N)} \big( \enorm{u^\star - u_\coarse^\star} + \eta_\coarse(u_\coarse^\star) \big).
\end{align*}
The term $N+1$ within the maximum can be estimated by
\begin{align*}
N+1\le \#\TT_{\ell'}-\#\TT_0\reff{axiom:sons}
\le (\Cson^{\ell'}-1)\,\#\TT_0.
\end{align*}
The C\'ea lemma \eqref{eq:cea} and \eqref{axiom:stability}--\eqref{axiom:reliability} give that $\enorm{u^\star-u_H^\star}\lesssim \enorm{u^\star-u_0^\star}$ and  $\eta_H(u^\star_H)\lesssim \eta_0(u^\star_0)$ (see, e.g.,  \cite[Lemma~3.5]{axioms}). 
Altogether, we thus arrive at
\begin{align}\label{eq:upper bound for As1}
\norm{u^\star}{\A_s} 
\lesssim 
\big( \enorm{u^\star - u_0^\star} + \eta_0(u_0^\star) \big).
\end{align}

{\bf Step~2.}
Next, we consider the generic case that $\underline\ell = \infty$ and $\eta_{\ell_0}(u_{\ell_0}^{\underline k})>0$ for all $\ell_0\in\N_0$.
Algorithm~\ref{algorithm} yields that $\#\TT_\ell\to\infty$ as $\ell\to\infty$.
Thus, we can argue analogously to the proof of~\cite[Theorem~4.1]{axioms}:
Let $N \in \N_0$. 
Choose the maximal $\ell \in \N_0$ such that 
$ \#\TT_\ell - \#\TT_0 + 1 \le N$.
Then, $\TT_\ell \in \T(N)$. 
The choice of $N$ guarantees that
\begin{align}
\label{eq:upper bound for As2}
N+1\le \#\TT_{\ell+1} - \#\TT_0 + 1 
 \reff{eq:TT0}\le \#\TT_{\ell+1}
 \le \Cson \#\TT_\ell
 \reff{eq:TT0}\le \Cson \#\TT_0 \, ( \#\TT_\ell - \#\TT_0 + 1 ).
\end{align}
This leads to 
$$
 (N+1)^s \min_{\TT_\coarse \in \T(N)} \big( \enorm{u^\star - u_\coarse^\star} + \eta_\coarse(u_\coarse^\star) \big)
 \lesssim (\#\TT_\ell - \#\TT_0 + 1)^s \big( \enorm{u^\star - u_\ell^\star} + \eta_\ell(u_\ell^\star) \big).
$$
Taking the supremum over all $N \in \N_0$, we conclude that
\begin{align}
\label{eq:upper bound for As3}
 \norm{u^\star}{\A_s} 
 \lesssim \sup_{\ell \in \N_0}(\#\TT_\ell - \#\TT_0 + 1)^s \big( \enorm{u^\star - u_\ell^\star} + \eta_\ell(u_\ell^\star) \big).
\end{align}

{\bf Step~3.} With stability~\eqref{axiom:stability} and the C\'ea lemma~\eqref{eq:cea}, we see for all $(\ell,0)\in\QQ$ that
\begin{align*}
 &\enorm{u^\star - u_\ell^\star} + \eta_\ell(u_\ell^\star)
 \reff{axiom:stability} \lesssim \enorm{u^\star - u_\ell^\star} + \enorm{u_\ell^\star - u_\ell^0} + \eta_\ell(u_\ell^0)
 \\ &\quad
 \le 2 \, \enorm{u^\star - u_\ell^\star} + \enorm{u^\star - u_\ell^0} + \eta_\ell(u_\ell^0)
 \reff{eq:cea} \lesssim \enorm{u^\star - u_\ell^0} + \eta_\ell(u_\ell^0)
 = \Delta_\ell^0.
\end{align*}
With \eqref{eq:upper bound for As1} and \eqref{eq:upper bound for As3}, we thus obtain  that
$$
 \norm{u^\star}{\A_s}
 \lesssim \sup_{(\ell,0) \in \QQ} (\#\TT_\ell - \#\TT_0 + 1)^s \, \big(\enorm{u^\star-u_\ell^\star} +\eta_\ell(u_\ell^\star)\big)
 \le \sup_{(\ell, k) \in \QQ} (\#\TT_\ell - \#\TT_0 + 1)^s \, \Delta_\ell^k.
$$
This concludes the proof.
\end{proof}

To prove the converse estimate, we need the following comparison lemma for the error estimator of the exact discrete solution $u_\ell^\star \in \XX_\ell$, which is found in \cite[Lemma~4.14]{axioms}.

\begin{lemma}\label{lemma:doerfler2}
Suppose \eqref{axiom:sons}--\eqref{axiom:overlay} and \eqref{axiom:stability}--\eqref{axiom:discrete_reliability}. Let $0 < \theta' < \theta_\opt := (1 + \Cstab^2\Crel^2)^{-1/2}$.
Then, there exist constants $C_1, C_2 > 0$ such that for all $s>0$ with $\norm{u^\star}{\mathbb{A}_s} < \infty$ and all $\TT_\coarse \in \T$, there exists $\RR_\coarse \subseteq \TT_\coarse $ which satisfies that
\begin{align}\label{eq:doerfler2_Rbound}
\# \mathcal{R}_\coarse \leq C_1 C_2^{-1/s} \, \norm{u^\star}{\mathbb{A}_s}^{1/s} \eta_\coarse(u_\coarse^\star)^{-1/s},
\end{align}
and the D\"orfler marking criterion
\begin{align}\label{eq:doerfler2_doerfler}
\theta' \eta_\coarse(u_\coarse^\star) \leq \eta_\coarse(\mathcal{R}_\coarse, u_\coarse^\star).
\end{align}
The constants $C_1,C_2$ depend only on the constants of~\eqref{axiom:stability}--\eqref{axiom:discrete_reliability}. \qed
\end{lemma}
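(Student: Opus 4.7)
The strategy is to exploit the assumption $\norm{u^\star}{\A_s}<\infty$ to construct a near-optimal mesh $\TT_\delta$ whose quasi-error is proportional to $\eta_\coarse(u_\coarse^\star)$, to form the common refinement $\TT_\fine:=\TT_\coarse\oplus\TT_\delta$, and then to take $\RR_\coarse:=\TT_\coarse\setminus\TT_\fine$. Concretely, I would fix a small parameter $\kappa>0$ (to be chosen at the end in terms of $\theta'$, $\Cstab$, $\Crel$, and $\qred$), set $\delta:=\kappa\,\eta_\coarse(u_\coarse^\star)$, and let $N_\delta\in\N_0$ be the smallest integer with $\norm{u^\star}{\A_s}/(N_\delta+1)^s\le\delta$. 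The definition of $\norm{u^\star}{\A_s}$ then yields some $\TT_\delta\in\T(N_\delta)$ satisfying $\enorm{u^\star-u_\delta^\star}+\eta_\delta(u_\delta^\star)\le\delta$.

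The cardinality bound~\eqref{eq:doerfler2_Rbound} is immediate: the overlay estimate~\eqref{axiom:overlay} gives $\#\TT_\fine-\#\TT_\coarse\le\#\TT_\delta-\#\TT_0\le N_\delta$, while the splitting property~\eqref{axiom:sons} yields $\#\RR_\coarse=\#(\TT_\coarse\setminus\TT_\fine)\le\#\TT_\fine-\#\TT_\coarse$. Combining these with the defining bound of $N_\delta$ leads to $\#\RR_\coarse\lesssim\kappa^{-1/s}\,\norm{u^\star}{\A_s}^{1/s}\,\eta_\coarse(u_\coarse^\star)^{-1/s}$, which is of the required form with $C_1 C_2^{-1/s}\simeq\kappa^{-1/s}$.

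For the D\"orfler criterion~\eqref{eq:doerfler2_doerfler}, decompose $\eta_\coarse(u_\coarse^\star)^2=\eta_\coarse(\RR_\coarse,u_\coarse^\star)^2+\eta_\coarse(\TT_\coarse\cap\TT_\fine,u_\coarse^\star)^2$ and estimate the second summand via stability~\eqref{axiom:stability} together with discrete reliability~\eqref{axiom:discrete_reliability}, $\enorm{u_\fine^\star-u_\coarse^\star}\le\Crel\,\eta_\coarse(\RR_\coarse,u_\coarse^\star)$. A Young inequality with free parameter $\mu>0$ produces
\[
\eta_\coarse(\TT_\coarse\cap\TT_\fine,u_\coarse^\star)^2 \le (1+\mu)\,\eta_\fine(u_\fine^\star)^2 + (1+\mu^{-1})\,\Cstab^2\Crel^2\,\eta_\coarse(\RR_\coarse,u_\coarse^\star)^2.
\]
The remaining quantity $\eta_\fine(u_\fine^\star)$ is then controlled by an estimator-reduction argument exploiting that $\TT_\fine$ refines $\TT_\delta$: stability~\eqref{axiom:stability} on $\TT_\delta\cap\TT_\fine$, reduction~\eqref{axiom:reduction} on $\TT_\fine\setminus\TT_\delta$, and the discrete-reliability bound $\enorm{u_\fine^\star-u_\delta^\star}\le\Crel\,\delta$ combine to give $\eta_\fine(u_\fine^\star)\lesssim\delta=\kappa\,\eta_\coarse(u_\coarse^\star)$. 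Substituting back and rearranging, the D\"orfler property $\eta_\coarse(\RR_\coarse,u_\coarse^\star)^2\ge(\theta')^2\,\eta_\coarse(u_\coarse^\star)^2$ reduces to the algebraic inequality $(\theta')^2\,[1+(1+\mu^{-1})\Cstab^2\Crel^2]+C\,\kappa^2\le 1$, where $C$ depends only on $\mu$, $\Cstab$, $\Crel$, and $\qred$.

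The main obstacle is the cascaded tuning of the auxiliary parameters: first, $\mu$ must be taken large enough that $(\theta')^2\,[1+(1+\mu^{-1})\Cstab^2\Crel^2]<1$, which is feasible precisely when $\theta'<\theta_\opt=(1+\Cstab^2\Crel^2)^{-1/2}$, and this critical threshold is exactly the hypothesis of the lemma; afterwards, $\kappa$ is chosen small enough (depending on the now-fixed $\mu$) to absorb the $C\kappa^2$ term. This cascaded choice determines $C_1$ and $C_2$ in terms of the constants of the axioms and completes the proof.
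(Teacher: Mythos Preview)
Your argument is correct and follows the now-standard overlay construction going back to \cite{stevenson2007,ckns2008}. The paper itself does not prove this lemma---note the \qed\ immediately after the statement---but quotes it from \cite[Lemma~4.14]{axioms}; your proposal essentially reconstructs that proof. One small remark: since your $\kappa$ is chosen in terms of $\theta'$, the resulting constant $C_2=\kappa$ inherits a dependence on $\theta'$; this is indeed the case in \cite{axioms} as well, and the paper's phrasing ``depend only on the constants of~\eqref{axiom:stability}--\eqref{axiom:discrete_reliability}'' should be read as including the fixed parameter~$\theta'$. You may also want to note explicitly that the trivial case $\eta_\coarse(u_\coarse^\star)=0$ (where $\delta=0$ and $N_\delta$ is undefined) is handled by taking $\RR_\coarse=\emptyset$.
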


\begin{proof}[{\bfseries Proof of Theorem~\ref{theorem:rates}}]
The proof is split into six steps.

{\bf Step~1.}
It holds that
$$
 \sup_{(\ell',k') \in \QQ} (\#\TT_{\ell'} - \#\TT_0 + 1)^s \, \Delta_{\ell'}^{k'}
 \le \sup_{(\ell',k') \in \QQ} \bigg(\sum_{\substack{(\ell,k) \in \QQ \\ (\ell,k) \le (\ell',k')}} \#\TT_\ell \bigg)^s \, \Delta_{\ell'}^{k'}.
$$
According to Lemma~\ref{lemma:opt:lower}, it only remains to prove that
\begin{align}\label{eq:rates:goal}
 \sup_{(\ell',k') \in \QQ} \bigg(\sum_{\substack{(\ell,k) \in \QQ \\ (\ell,k) \le (\ell',k')}} \#\TT_\ell \bigg)^s \, \Delta_{\ell'}^{k'}
 \lesssim \max\big\{\norm{u^\star}{\A_s},\Delta_0^0\big\}.
\end{align}
Without loss of generality, we may assume that $\norm{u^\star}{\A_s} < \infty$. 

{\bf Step~2.} Provided that $(\ell+1,0) \in \QQ$ (and hence $\underline k(\ell)<\infty$) Lemma~\ref{lemma:lambda}(i)$\&$(iii) and Algorithm~\ref{algorithm}(iv) prove that
$$
\dist(u_\ell^\star, u_\ell^\k)
\le \frac{\qctr}{1 \!-\! \qctr} \, \dist(u_\ell^\k, u_\ell^{\k-1})
\le \frac{\qctr}{1 \!-\! \qctr} \, \lctr \, \eta_\ell(u_\ell^\k).
$$
Under~\eqref{axiom:norm}, this leads to
\begin{subequations}\label{eq:stability}
\begin{align}
 \enorm{u_\ell^\star - u_\ell^\k} 
 = \dist(u_\ell^\star, u_\ell^\k)
\le \frac{\qctr}{1-\qctr} \lctr\eta_\ell(u_\ell^\k)
\reff{def:lambda_opt}{\le} \Cstab^{-1} \, \lctr/\lopt \, \eta_\ell(u_\ell^\k).
\end{align}
Under~\eqref{axiom:energy}, this leads to
\begin{align}
\enorm{u_\ell^\star - u_\ell^\k}
\reff{eq:energy}\le \sqrt{2/\alpha} \, \dist(u_\ell^\star, u_\ell^\k)
\reff{def:lambda_opt}\le \Cstab^{-1} \, \lctr/\lopt \, \eta_\ell(u_\ell^\k).
\end{align}
\end{subequations}

{\bf Step~3.} With Step~2, we see that 
\begin{align*}
 \eta_\ell(u_\ell^\k)
 &\reff{axiom:stability}\le \eta_\ell(u_\ell^\star) + \Cstab \, \enorm{u_\ell^\star - u_\ell^\k}
 \reff{eq:stability}\le \eta_\ell(u_\ell^\star) + \lctr / \lopt \, \eta_\ell(u_\ell^\k),
\\
 \eta_\ell(u_\ell^\star)
 &\reff{axiom:stability}\le \eta_\ell(u_\ell^\k) + \Cstab \, \enorm{u_\ell^\star - u_\ell^\k}
 \reff{eq:stability}\le \eta_\ell(u_\ell^\k) + \lctr / \lopt \, \eta_\ell(u_\ell^\k).
\end{align*}
With $0 < \lctr / \lopt < 1$, this guarantees the equivalence
\begin{align}\label{eq:stability:**}
 (1 -  \lctr / \lopt) \, \eta_\ell(u_\ell^\k) \le \eta_\ell(u_\ell^\star)
 \le (1 + \lctr / \lopt) \, \eta_\ell(u_\ell^\k)
 \quad \text{for all } (\ell+1,0) \in \QQ.
\end{align}

{\bf Step~4.} 
Let $\RR_\ell \subseteq \TT_\ell$ be the subset from Lemma~\ref{lemma:doerfler2} with $\theta'$ from \eqref{eq:opt:theta'}. Note that 
\begin{align}\label{eq2:stability}
 \eta_\ell(\RR_\ell, u_\ell^\star) 
 \reff{axiom:stability} \le \eta_\ell(\RR_\ell, u_\ell^\k) + \Cstab \, \enorm{u_\ell^\star - u_\ell^\k}
 \reff{eq:stability} \le \eta_\ell(\RR_\ell, u_\ell^\k) + \lctr/\lambda_\opt \, \eta_\ell(u_\ell^\k).
\end{align}
This proves that
\begin{align}\label{eq3:stability}
 (1 - \lctr/\lambda_\opt) \, \theta' \, \eta_\ell(u_\ell^\k)
 \reff{eq:stability:**} \le \theta' \, \eta_\ell(u_\ell^\star)
 \reff{eq:doerfler2_doerfler}\le \eta_\ell(\RR_\ell, u_\ell^\star)
 \reff{eq2:stability}\le \eta_\ell(\RR_\ell, u_\ell^\k)+ \lctr/\lambda_\opt \, \eta_\ell(u_\ell^\k).
\end{align}
The choice of $\theta'$ in~\eqref{eq:opt:theta'} gives that $\theta = (1 - \lctr/\lambda_\opt) \, \theta' - \lctr/\lambda_\opt$. 
Thus, we obtain that
$$
 \theta \, \eta_\ell(u_\ell^\k)
  \reff{eq:opt:theta'}= \big( (1 - \lctr/\lambda_\opt) \, \theta' - \lctr / \lopt \big) \, \eta_\ell(u_\ell^\k)
 \reff{eq3:stability}\le \eta_\ell(\RR_\ell, u_\ell^\k).
$$
Hence, $\RR_\ell$ satisfies the D\"orfler marking criterion used in Algorithm~\ref{algorithm}(v). By (quasi-) minimality of $\MM_\ell$ in Algorithm~\ref{algorithm}(v), we  infer that
$$
 \#\MM_\ell \lesssim \#\RR_\ell 
 \reff{eq:doerfler2_Rbound}\lesssim \norm{u^\star}{\A_s}^{1/s} \, \eta_\ell(u_\ell^\star)^{-1/s}
 \reff{eq:stability:**}\simeq \norm{u^\star}{\A_s}^{1/s} \, \eta_\ell(u_\ell^\k)^{-1/s}.
$$
Nested iteration guarantees that $u_{\ell+1}^0 = u_\ell^\k$. Thus, reliability~\eqref{eq:lemma:reliable} and~\eqref{axiom:stability}--\eqref{axiom:reduction} lead to
$$ 
 \eta_\ell(u_\ell^\k) \reff{eq:lemma:reliable}\simeq \Delta_\ell^\k 
 = \enorm{u^\star - u_{\ell+1}^0} + \eta_\ell(u_{\ell+1}^0)
 \ge \enorm{u^\star - u_{\ell+1}^0} + \eta_{\ell+1}(u_{\ell+1}^0)
 = \Delta_{\ell+1}^0.
$$
Overall, we derive that
\begin{align}\label{eq:optimality:step3}
 \#\MM_\ell 
 \lesssim \norm{u^\star}{\A_s}^{1/s} \, \eta_\ell(u_\ell^\k)^{-1/s}
 \lesssim \norm{u^\star}{\A_s}^{1/s} \, (\Delta_{\ell+1}^0)^{-1/s}
 \quad \text{for all } (\ell+1,0) \in \QQ.
\end{align}
The hidden constant depends only on $\Cstab$, $\qred$, $\Crel$, $1-\lctr/\lopt$, $\Cmark$, $\Crel'$, and~$s$.

{\bf Step~5.} 
For $(\ell, k) \in \QQ$ with $\TT_\ell \neq \TT_0$, Step~4 and the closure estimate~\eqref{axiom:closure} lead to 
$$
 \#\TT_\ell - \#\TT_0 + 1 \simeq  \#\TT_\ell - \#\TT_0
 \reff{axiom:closure} {\lesssim} \sum_{n = 0}^{\ell-1} \#\MM_n
 \reff{eq:optimality:step3}\lesssim 
 \norm{u^\star}{\A_s}^{1/s} \, \sum_{n = 0}^\ell (\Delta_n^0)^{-1/s}.
$$
Replacing $\norm{u^\star}{\A_s}$ with $\max\{\norm{u^\star}{\A_s},\Delta_0^0\}$, the overall estimate trivially holds  for $\TT_\ell=\TT_0$.
We thus have derived that
\begin{align*}
 \#\TT_\ell - \#\TT_0 + 1 
 &\lesssim  \max\{\norm{u^\star}{\A_s},\Delta_0^0\}^{1/s} \, \sum_{n = 0}^\ell (\Delta_n^0)^{-1/s}\\
 &\le \max\{\norm{u^\star}{\A_s},\Delta_0^0\}^{1/s} \!\! \sum_{\substack{(\ell',k') \in \QQ \\ (\ell',k') \le (\ell,k)}} (\Delta_{\ell'}^{k'})^{-1/s}
 \quad \text{for all $(\ell,k) \in \QQ$},
\end{align*}
where the hidden constant depends only on $\Cstab$, $\qred$, $\Crel$, $\Cmesh$, $1-\lctr/\lopt$, $\Cmark$, $\Crel'$, $\Delta_0^0$, and $s$.
Finally, we employ linear convergence~\eqref{eq:theorem:linconv} to bound the latter sum by means of the geometric series
$$
\sum_{\substack{(\ell',k') \in \QQ \\ (\ell',k') \le (\ell,k)}} (\Delta_{\ell'}^{k'})^{-1/s}
\reff{eq:theorem:linconv}\le \Clin^{1/s} \, (\Delta_{\ell}^{k})^{-1/s} \, \sum_{\substack{(\ell',k') \in \QQ \\ (\ell',k') \le (\ell,k)}} 
	(\qlin^{1/s})^{|(\ell,k)| - |(\ell',k')|}
\le \frac{\Clin^{1/s}}{1 - \qlin^{1/s}} \, (\Delta_{\ell}^{k})^{-1/s}.
$$
Combining the latter two estimates, we see that
\begin{align}\label{eq:rates:step5}
 \#\TT_\ell - \#\TT_0 + 1 \lesssim \max\{\norm{u^\star}{\A_s},\Delta_0^0\}^{1/s} (\Delta_{\ell}^{k})^{-1/s}
 \quad \text{for all } (\ell,k) \in \QQ,
\end{align}
where the hidden constant depends only on $\Cstab$, $\qred$, $\Crel$, $\Cmark$, $1-\lctr/\lopt$, $\Cmark$, $\Crel'$, $\Clin$, $\qlin$, $\Delta_0^0$, and  $s$. 

{\bf Step~6.}
Let $(\ell',k') \in \QQ$. Together with Step~5, the geometric series proves that 
\begin{align*}
& \sum_{\substack{(\ell,k) \in \QQ \\ (\ell,k) \le (\ell',k')}} \#\TT_\ell
 \reff{eq:TT0}\le (\#\TT_0) \!\! \sum_{\substack{(\ell,k) \in \QQ \\ (\ell,k) \le (\ell',k')}} (\#\TT_\ell - \#\TT_0 + 1)
 \\& \quad
 \reff{eq:rates:step5} \lesssim \max\{\norm{u^\star}{\A_s},\Delta_0^0\}^{1/s} \!\!\!\! \sum_{\substack{(\ell,k) \in \QQ \\ (\ell,k) \le (\ell',k')}} (\Delta_\ell^k)^{-1/s}
\reff{eq:theorem:linconv} \le \frac{\Clin^{1/s}}{1 - \qlin^{1/s}} \, \max\{\norm{u^\star}{\A_s},\Delta_0^0\}^{1/s} \, (\Delta_{\ell'}^{k'})^{-1/s}.
\end{align*}
Rearranging this estimate, we end up with
\begin{align*}
 \sup_{(\ell',k') \in \QQ} \bigg( \sum_{\substack{(\ell,k) \in \QQ \\ (\ell,k) \le (\ell',k')}} \#\TT_\ell \bigg)^s \, \Delta_{\ell'}^{k'}
 \lesssim \max\{\norm{u^\star}{\A_s},\Delta_0^0\},
\end{align*}
where the hidden constant depends only on $\Cstab$, $\qred$, $\Crel$, $\Cmesh$, $1-\lctr/\lopt$, $\Cmark$, $\Crel'$, $\Clin$, $\qlin$, $\Delta_0^0$, $\#\TT_0$, and $s$.
This concludes the proof.
\end{proof}

%%%%%%%%%%%%%%%%%%%%%%%%%%%%%%%%%%%%%%%%%%%%%%%%%%%%%%%%%%%%%%%%%%%%%%%%%%%%%%%%%%%
%%%%%%%%%%%%%%%%%%%%%%%%%%%%%%%%%%%%%%%%%%%%%%%%%%%%%%%%%%%%%%%%%%%%%%%%%%%%%%%%%%%
\section{Numerical experiments}
\label{section:numerics}
%%%%%%%%%%%%%%%%%%%%%%%%%%%%%%%%%%%%%%%%%%%%%%%%%%%%%%%%%%%%%%%%%%%%%%%%%%%%%%%%%%%
%%%%%%%%%%%%%%%%%%%%%%%%%%%%%%%%%%%%%%%%%%%%%%%%%%%%%%%%%%%%%%%%%%%%%%%%%%%%%%%%%%%

\noindent 
This section provides numerical experiments that underpin our theoretical findings, where we employ $H^1$-conforming lowest-order FEM in 2D; see~\eqref{eq:nonlinear:Xh}. On the one hand, we present an example for AFEM with optimal PCG solver, cf.~Section~\ref{section:linear}, and on the other, an example for AFEM for a strongly monotone nonlinearity, cf.~Section~\ref{section:nonlinear}. 
For each problem, we compare the performance of Algorithm~\ref{algorithm} for
\begin{itemize}
\item different values of $\lambda \in \{1,10^{-1},10^{-2},10^{-3},10^{-4}\}$,
\item different values of $\theta \in \{0.1,0.3,0.5,0.7,0.9,1\}$,
\end{itemize}
where $\theta = 1$ corresponds to uniform mesh-refinement. 
In the experiments, the domain $\Omega\subset\R^2$ is either the $Z$--shaped domain from Figure~\ref{fig:geometries} (left) or the $L$--shaped domain from Figure~\ref{fig:geometries} (right).
For further examples of Algorithm~\ref{algorithm} applied to strongly monotone PDEs, we refer to the own work~\cite{banach} as well as the recent preprint~\cite{hw19}.

\begin{figure}
  \centering
  \raisebox{-0.5\height}{\includegraphics[width=0.4\textwidth]{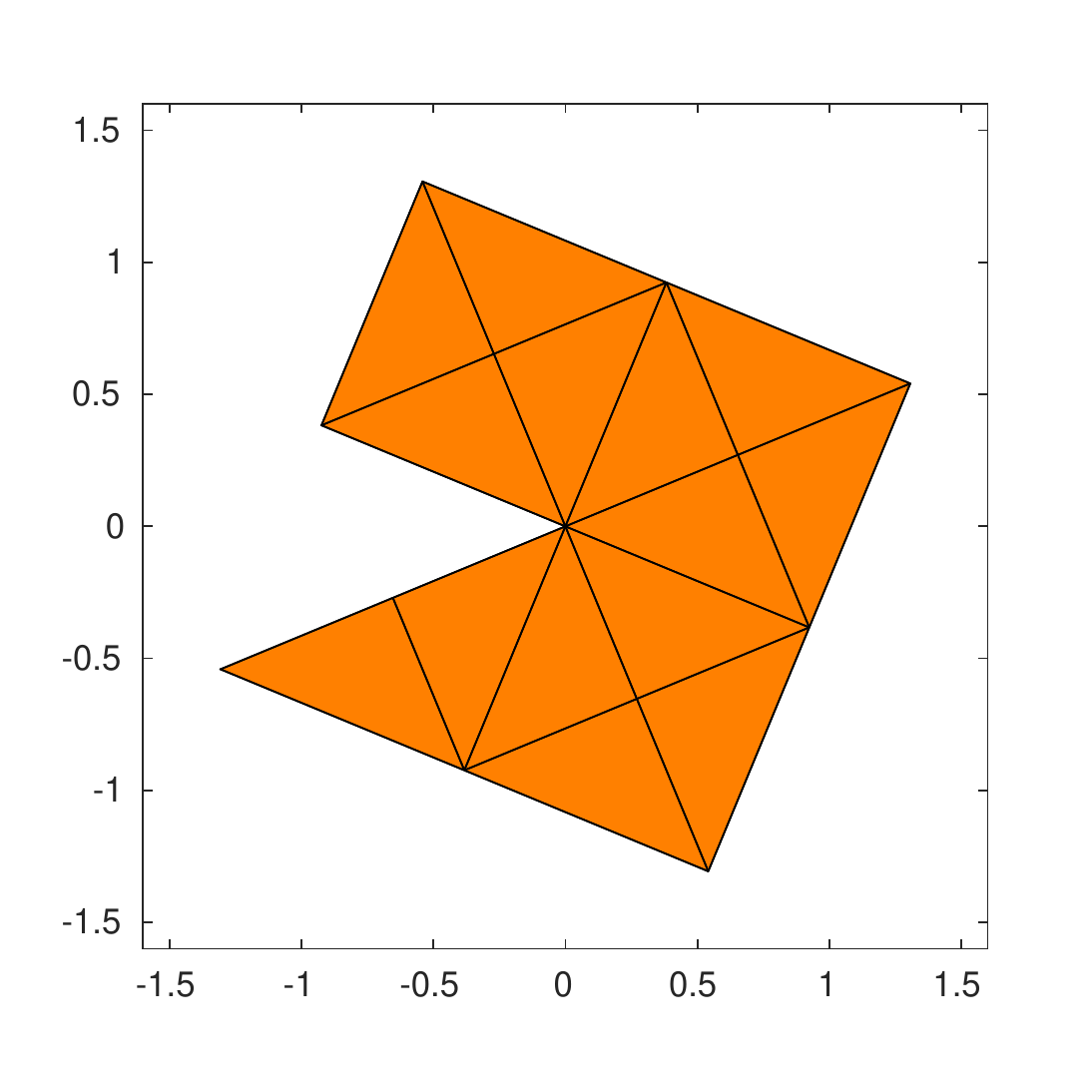}}
  \quad\quad
  \raisebox{-0.5\height}{\includegraphics[width=0.4\textwidth]{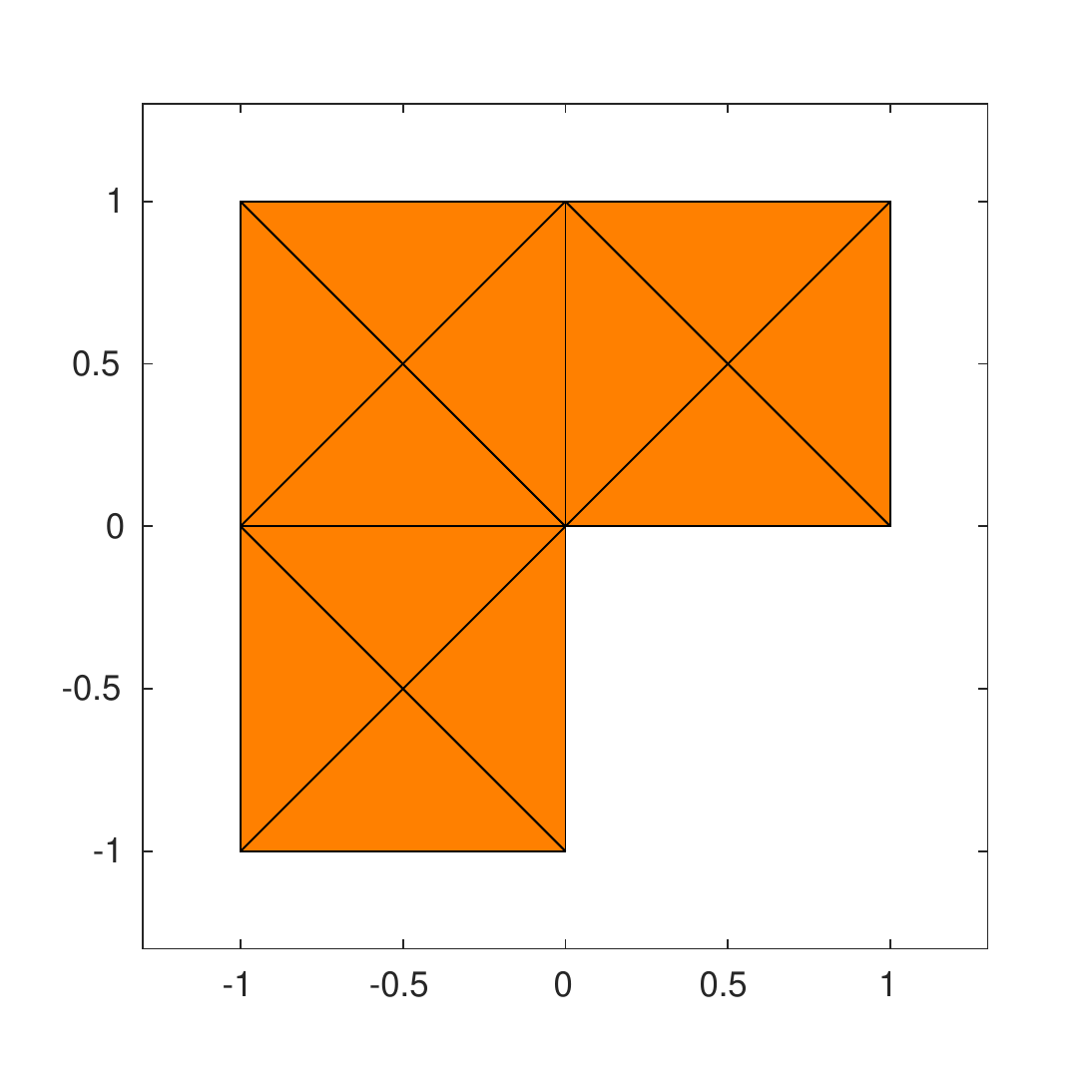}}
  \caption{$Z$--shaped domain $\Omega\subset\R^2$ with initial mesh $\TT_0$ (left) and $L$--shaped domain $\Omega\subset\R^2$ with initial mesh $\TT_0$ (right).}
  \label{fig:geometries}
\end{figure}

%%%%%%%%%%%%%%%%%%%%%%%%%%%%%%%%%%%%%%%%%%%%%%%%%%%%%%%%%%%%%%%%%%%%%%%%%%%%%%%%%%%
\subsection{AFEM for linear elliptic PDE with optimal PCG solver}
\label{section:linear_numerics}
%%%%%%%%%%%%%%%%%%%%%%%%%%%%%%%%%%%%%%%%%%%%%%%%%%%%%%%%%%%%%%%%%%%%%%%%%%%%%%%%%%%

\noindent We consider the following Poisson problem with homogeneous Dirichlet boundary conditions
\begin{align}
\begin{split}
	-\Delta u^\star&=1\quad\text{in }\Omega,\\
	u^\star&=0\quad\text{on }\Gamma,
	\end{split}
\end{align}
for both geometries from Figure~\ref{fig:geometries}. As an optimal preconditioner for the PCG solver, we use the multilevel additive Schwarz preconditioner of~\cite[Section 7.4.1]{dissFuehrer}.

\begin{figure}
	\centering
	\raisebox{-0.5\height}{\includegraphics[width=0.49\textwidth]{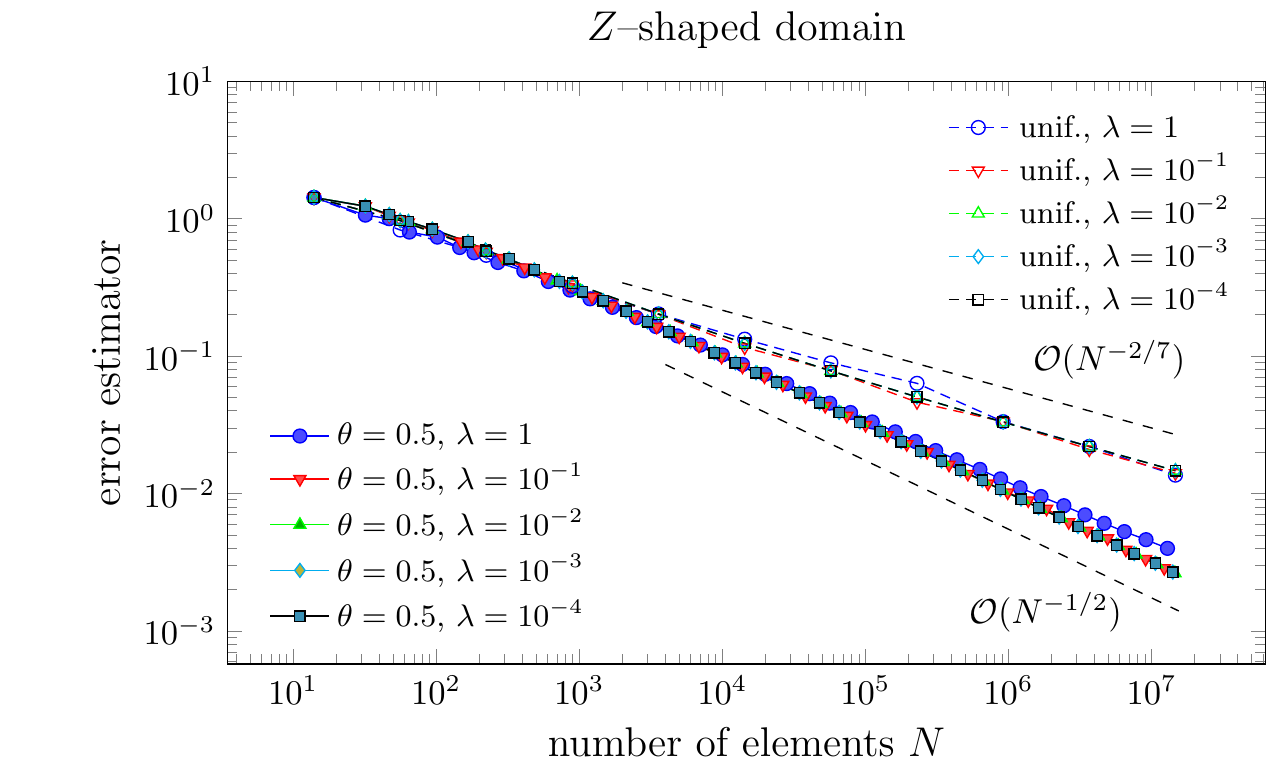}}
	\hfill
	\raisebox{-0.5\height}{\includegraphics[width=0.49\textwidth]{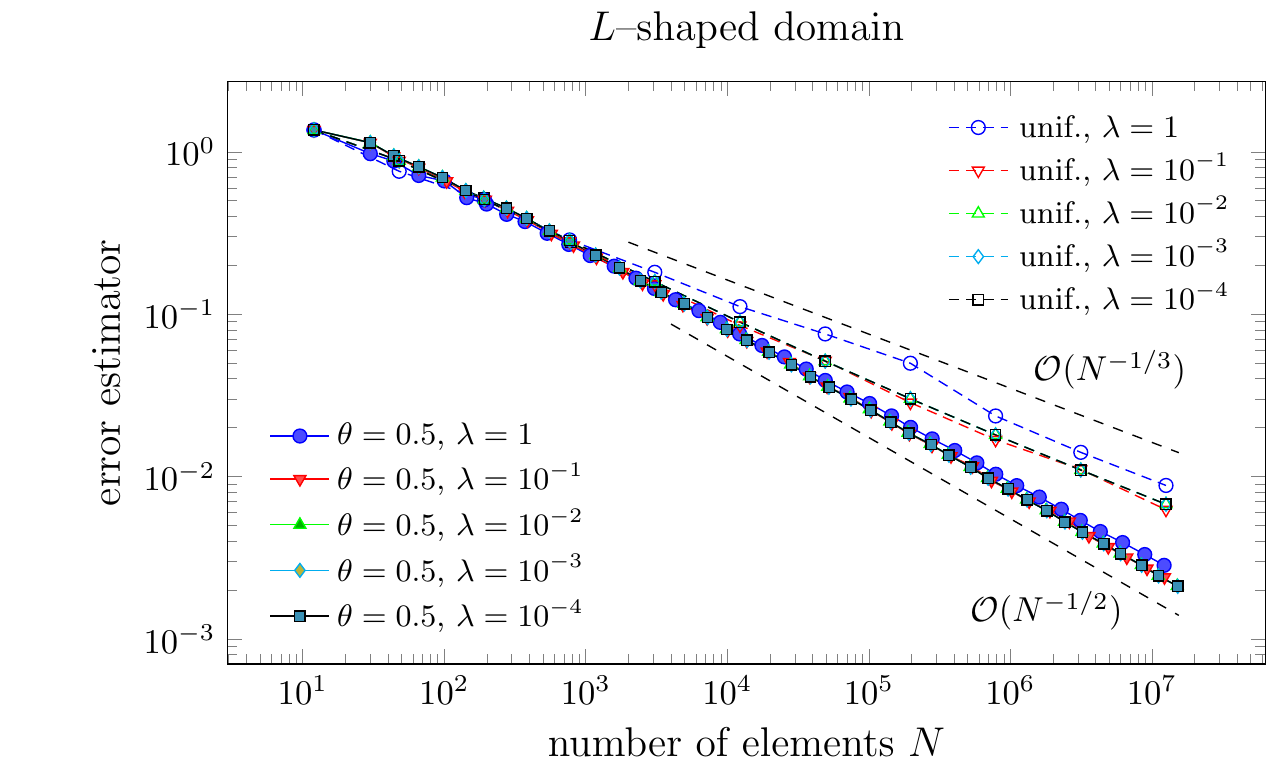}}\vspace{0.2cm}
	\raisebox{-0.5\height}{\includegraphics[width=0.49\textwidth]{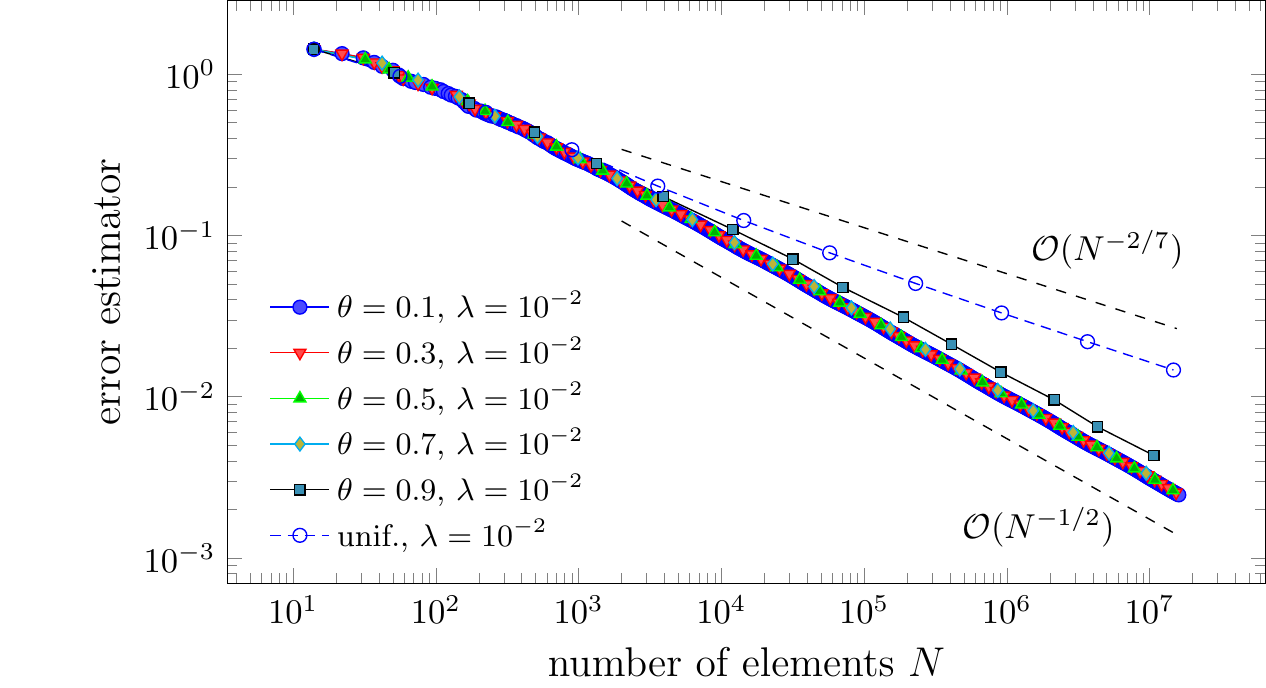}}
	\hfill
	\raisebox{-0.5\height}{\includegraphics[width=0.49\textwidth]{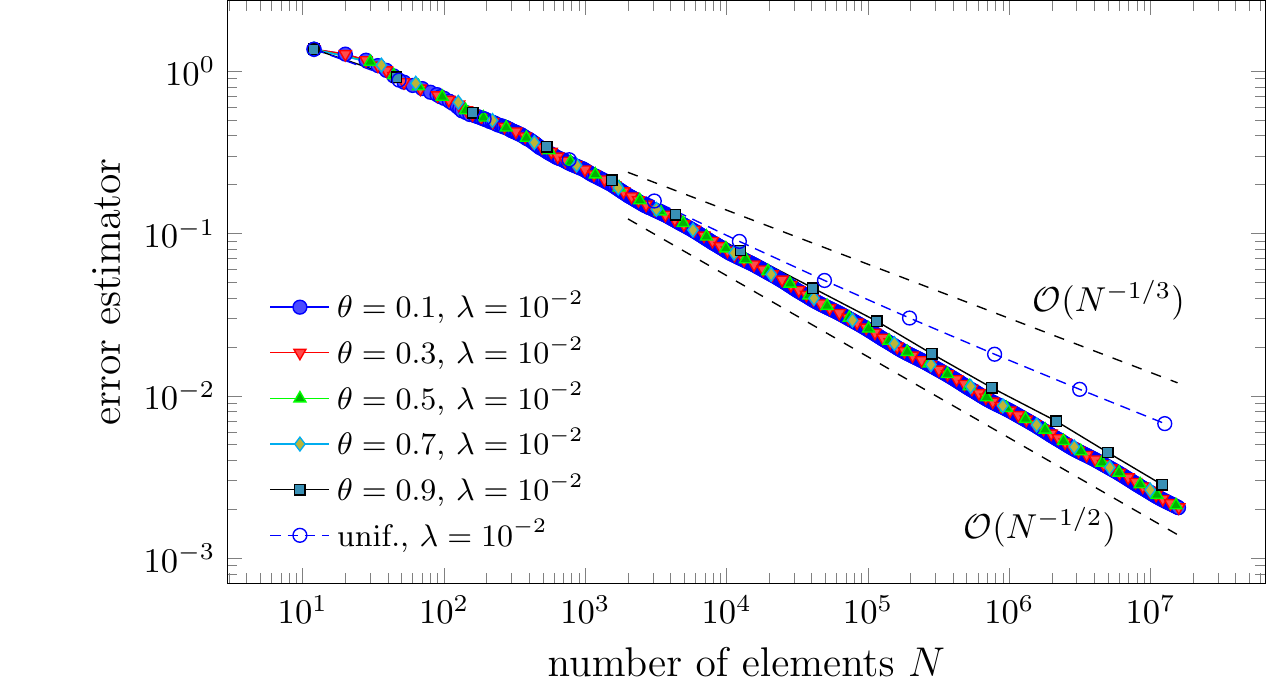}}
	\caption{Example from Section~\ref{section:linear_numerics}: Error estimator $\eta_\ell(u_\ell^\k)$ of the last step of the PCG solver with respect to the number of elements $N$ for $\theta=0.5$ and $\lambda\in\{1,10^{-1},\ldots,10^{-4}\}$ (top) as well as for $\lambda=10^{-2}$ and $\theta\in\{0.1, 0.3, \ldots, 0.9\}$ (bottom).}
\label{fig:conv_linear}
\end{figure}

In Figure~\ref{fig:conv_linear}, we compare Algorithm~\ref{algorithm} for different values of $\theta$ and $\lambda$, and uniform mesh-refinement. To this end, the error estimator $\eta_\ell(u_\ell^\k)$ of the last step of the PCG solver is plotted over the number of elements. Recall that  $\eta_\ell(u_\ell^\k)\simeq\Delta_\ell^\k$ according to Proposition~\ref{lemma:reliable}. We see that uniform mesh-refinement leads to the suboptimal rate of convergence $\OO(N^{-2/7})$ for the $Z$--shaped domain and $\OO(N^{-1/3})$ for the $L$--shaped domain.
 Algorithm~\ref{algorithm} regains the optimal rate of convergence $\OO(N^{-1/2})$, which empirically confirms Theorem~\ref{theorem:rates}. The latter rate of convergence appears to be even robust with respect to $\theta\in\{0.1,0.3, \ldots,0.9\}$ as well as $\lambda\in\{1, 10^{-1},\ldots,10^{-4}\}$.

\begin{figure}
	\centering
	\raisebox{-0.5\height}{\includegraphics[width=0.49\textwidth]{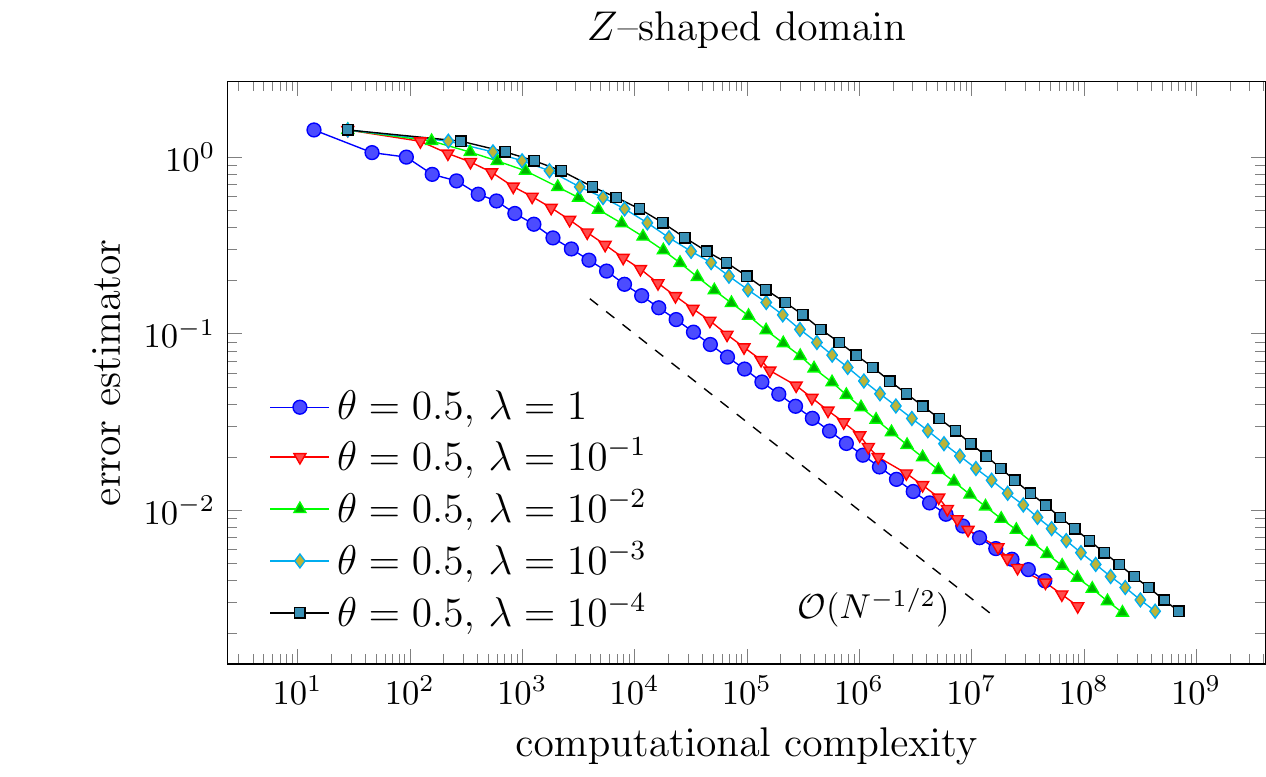}}
	\hfill
	\raisebox{-0.5\height}{\includegraphics[width=0.49\textwidth]{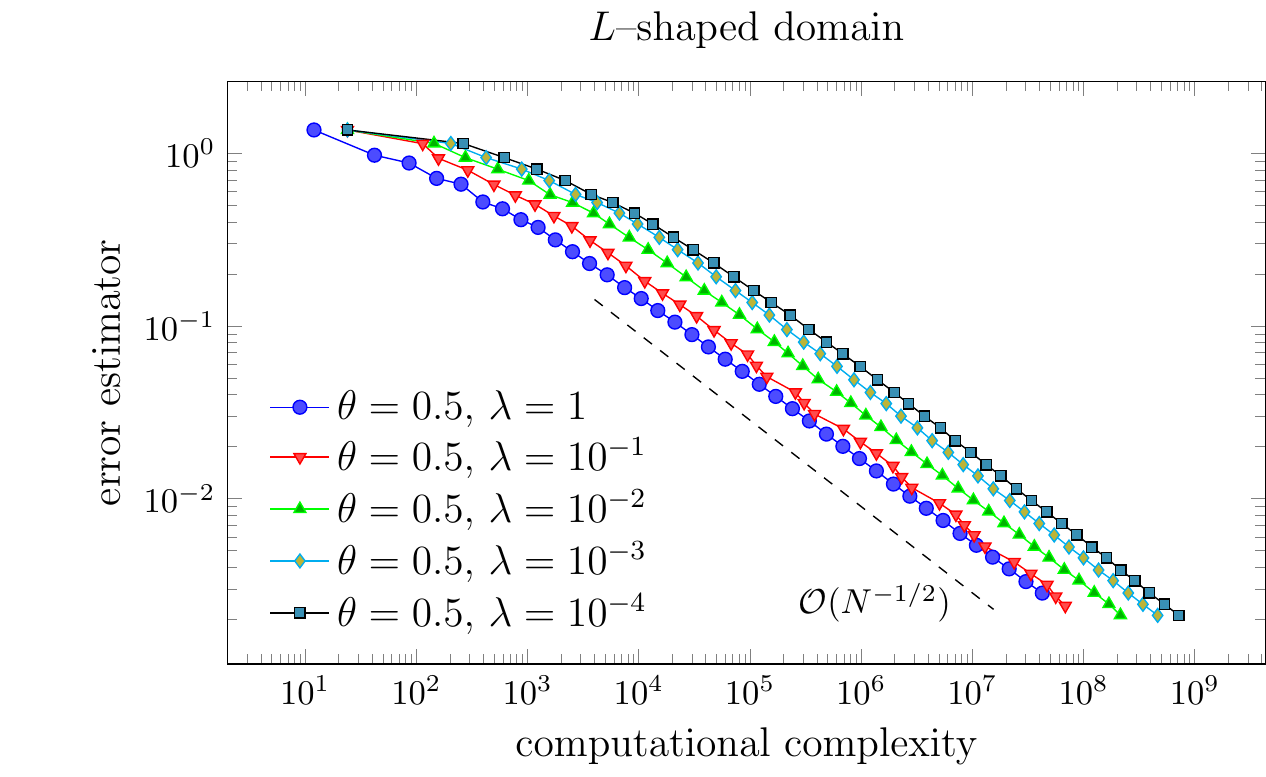}}\vspace{0.2cm}
	\raisebox{-0.5\height}{\includegraphics[width=0.49\textwidth]{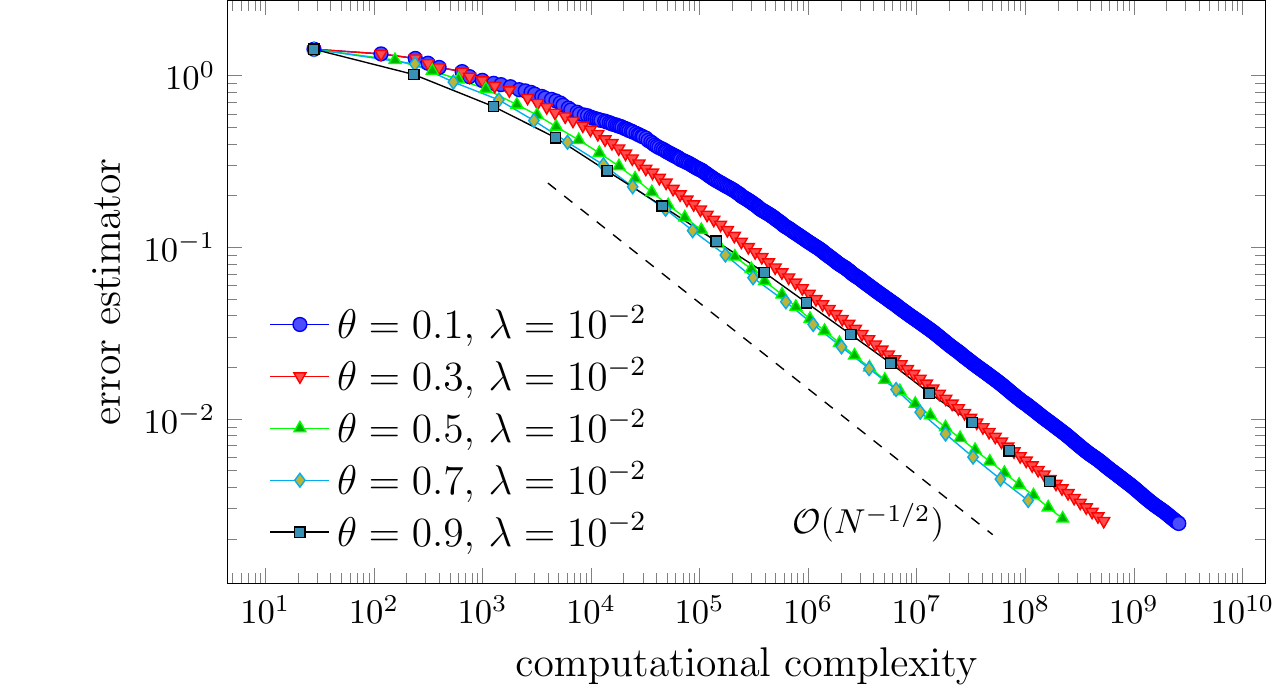}}
	\hfill
	\raisebox{-0.5\height}{\includegraphics[width=0.49\textwidth]{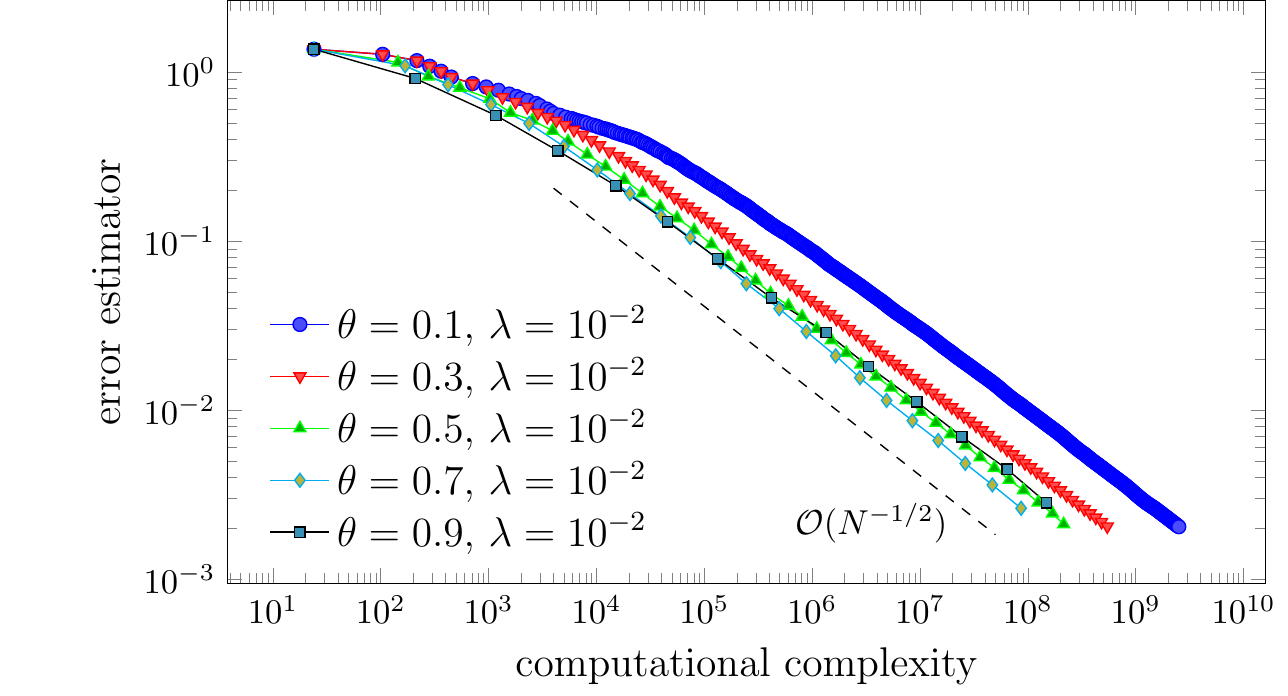}}
	\caption{Example from Section~\ref{section:linear_numerics}: Error estimator $\eta_\ell(u_\ell^\k)$ of the last step of the PCG solver with respect to the cumulative sum $\sum_{(\ell',k')\le(\ell,\k)}\#\TT_{\ell'}$ for $\theta=0.5$ and $\lambda\in\{1,10^{-1},\ldots,10^{-4}\}$ (top) as well as for $\lambda=10^{-2}$ and $\theta\in\{0.1, 0.3, \ldots, 0.9\}$ (bottom).}
\label{fig:compl_linear}
\end{figure}

In Figure~\ref{fig:compl_linear}, we aim to underpin that Algorithm~\ref{algorithm} has the optimal rate of convergence with respect to the computational complexity. To this end, we plot the error estimator $\eta_\ell(u_\ell^\k)$ of the last step of the PCG solver over the cumulative sum $\sum_{(\ell',k')\le(\ell,\k)}\#\TT_{\ell'}$.
In accordance with Theorem~\ref{theorem:rates}, we observe again the optimal order $\OO(N^{-1/2})$.

\begin{figure}
	\centering
	\raisebox{-0.5\height}{\includegraphics[width=0.49\textwidth]{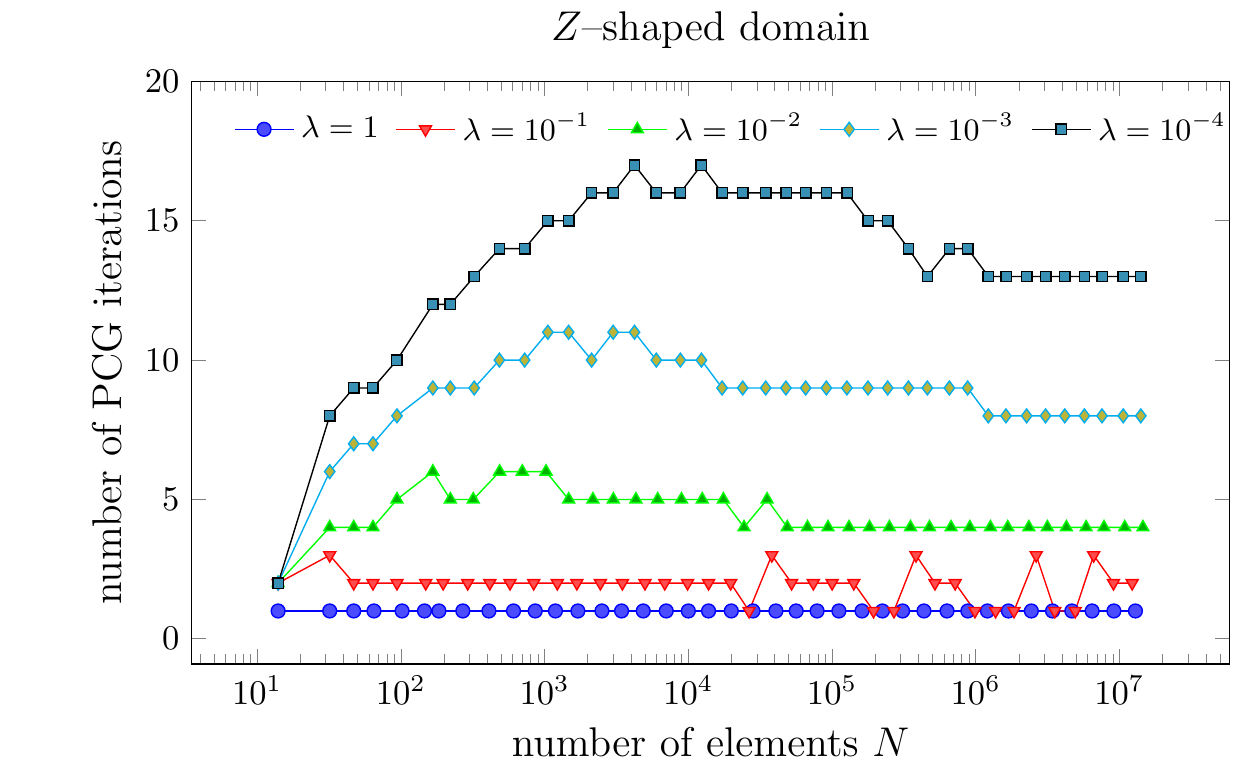}}
	\hfill
	\raisebox{-0.5\height}{\includegraphics[width=0.49\textwidth]{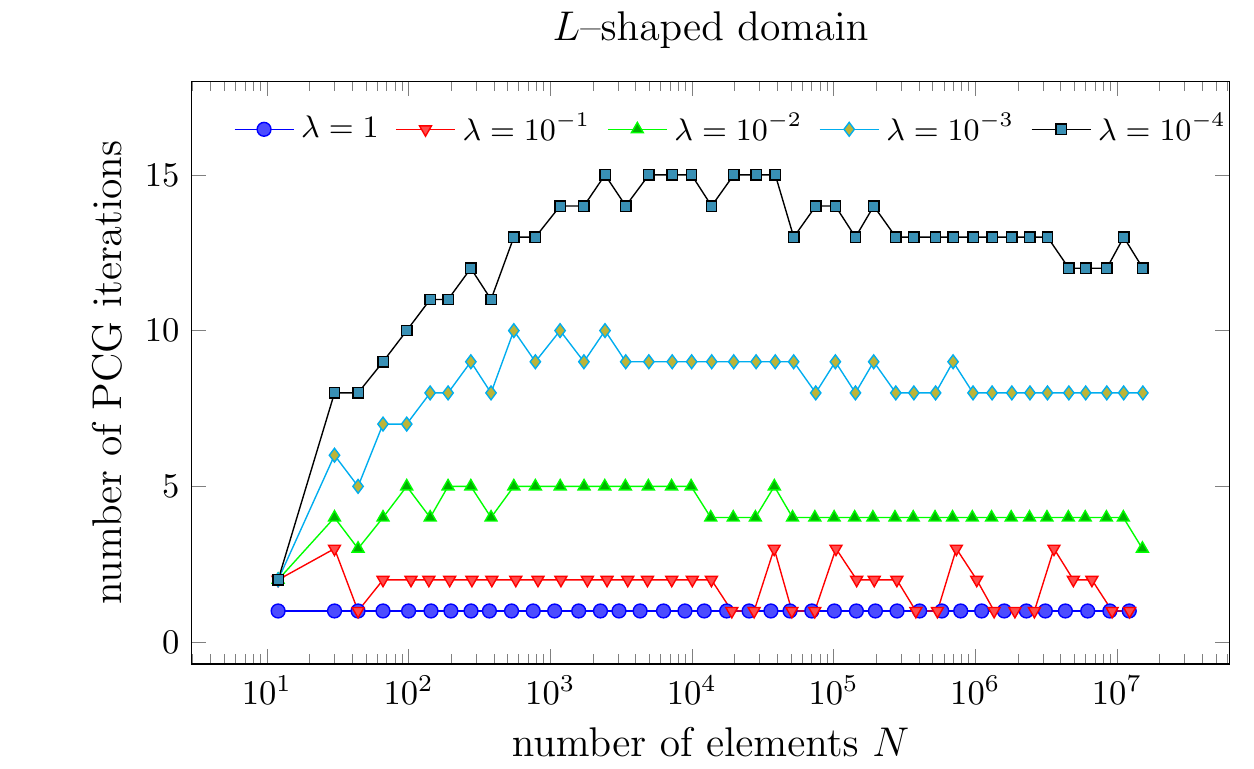}}\vspace{0.2cm}
	\raisebox{-0.5\height}{\includegraphics[width=0.49\textwidth]{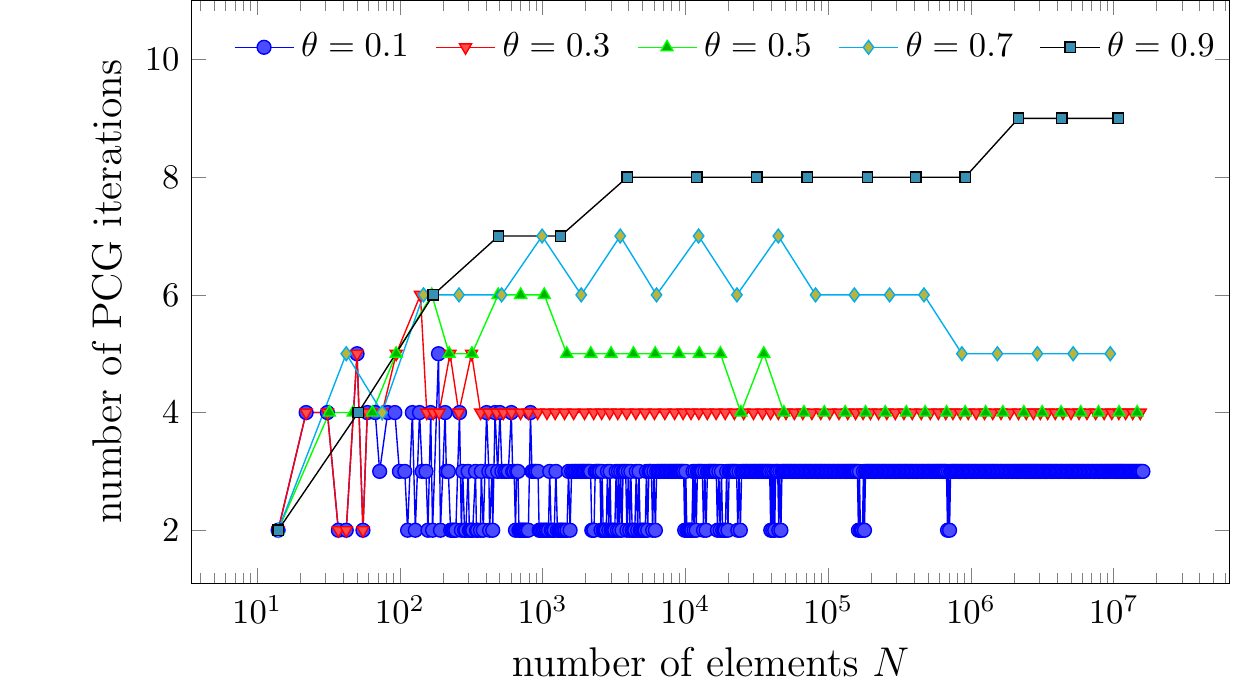}}
	\hfill
	\raisebox{-0.5\height}{\includegraphics[width=0.49\textwidth]{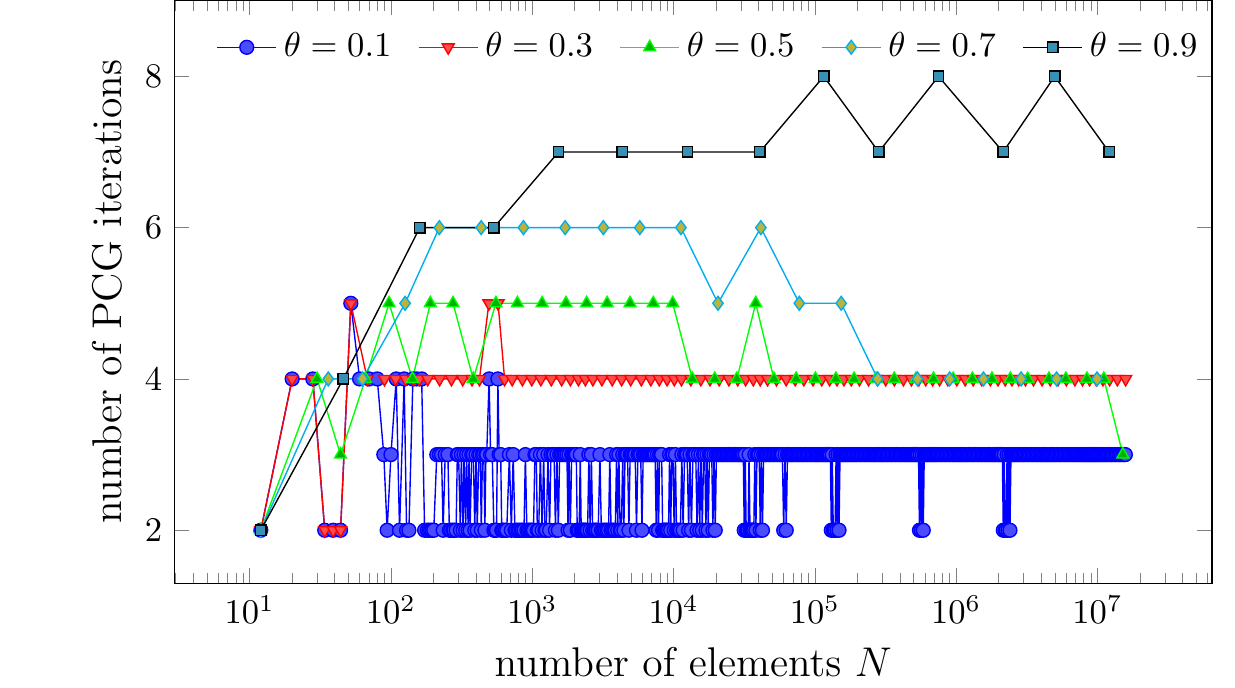}}
	\caption{Example from Section~\ref{section:linear_numerics}: Number of PCG iterations with respect to the number of elements $N$ for $\theta=0.5$ and $\lambda\in\{1,10^{-1},\ldots,10^{-4}\}$ (top) as well as for $\lambda=10^{-2}$ and $\theta\in\{0.1, 0.3, \ldots, 0.9\}$ (bottom).}
\label{fig:nIt_linear}
\end{figure}

In Figure~\ref{fig:nIt_linear}, we take a look at the number of PCG iterations.
We observe that a larger value of $\lambda$ or a smaller value of $\theta$ lead to a smaller number of PCG iterations. Nonetheless, in each case, this number stays uniformly bounded. 

%%%%%%%%%%%%%%%%%%%%%%%%%%%%%%%%%%%%%%%%%%%%%%%%%%%%%%%%%%%%%%%%%%%%%%%%%%%%%%%%%%%
\subsection{AFEM for strongly monotone nonlinearity}
\label{section:nonlinear_numerics}
%%%%%%%%%%%%%%%%%%%%%%%%%%%%%%%%%%%%%%%%%%%%%%%%%%%%%%%%%%%%%%%%%%%%%%%%%%%%%%%%%%%

\noindent We consider the 
problem
\begin{align}
\begin{split}
  -\div \big({a}(\cdot,|\nabla u^\star|^2)\nabla u^\star\big) &= 1 \quad \text{in } \Omega,\\
 u^\star &= 0 \quad \text{on } \Gamma,
\end{split}
\end{align}
where the scalar nonlinearity ${a}: \Omega \times \R_{\geq 0} \rightarrow \R$ is defined by
\begin{align}
	a(x,t):=1 + \frac{\ln(1+t)}{1+t}.
\end{align}
Then,~\eqref{item:G_bounded}--\eqref{item:G_lipschitz1} hold with $\alpha=c_a'\approx 0.9582898017 $ and $L=C_a' \approx 1.542343818$.

\begin{figure}
	\centering
	\raisebox{-0.5\height}{\includegraphics[width=0.49\textwidth]{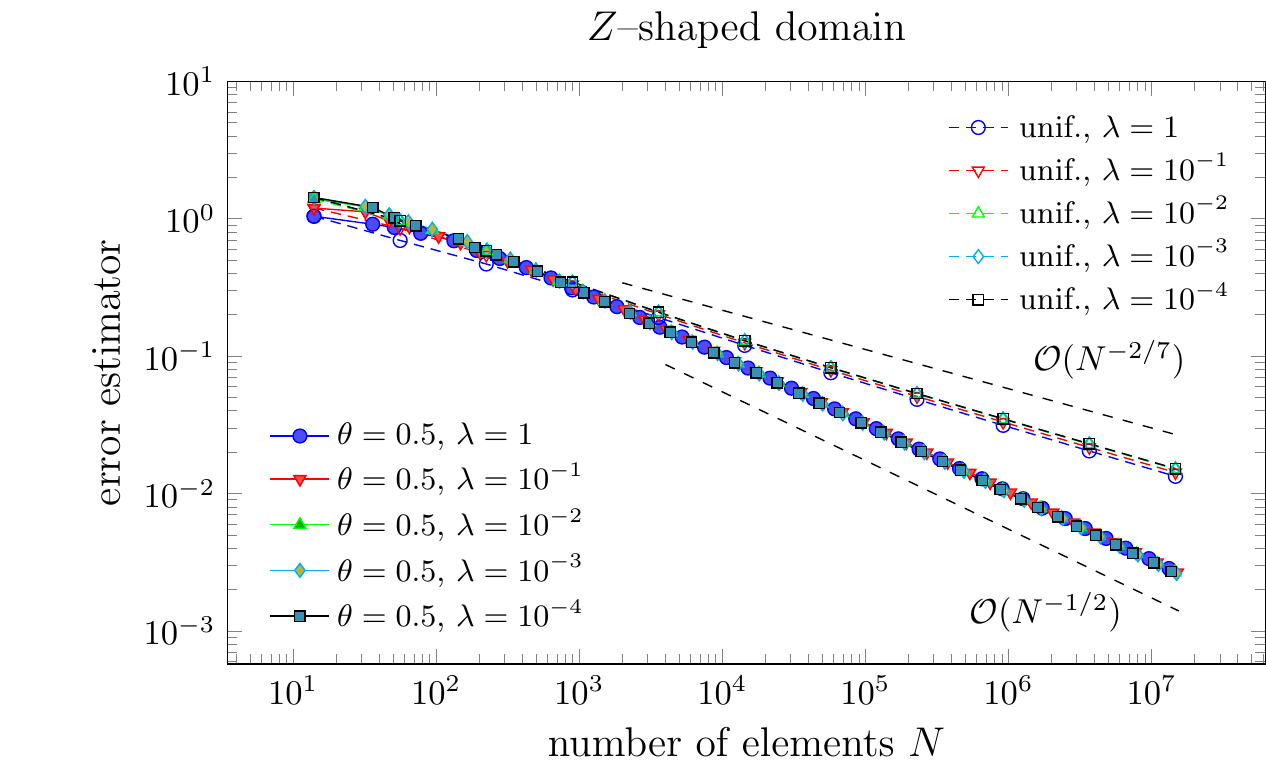}}
	\hfill
	\raisebox{-0.5\height}{\includegraphics[width=0.49\textwidth]{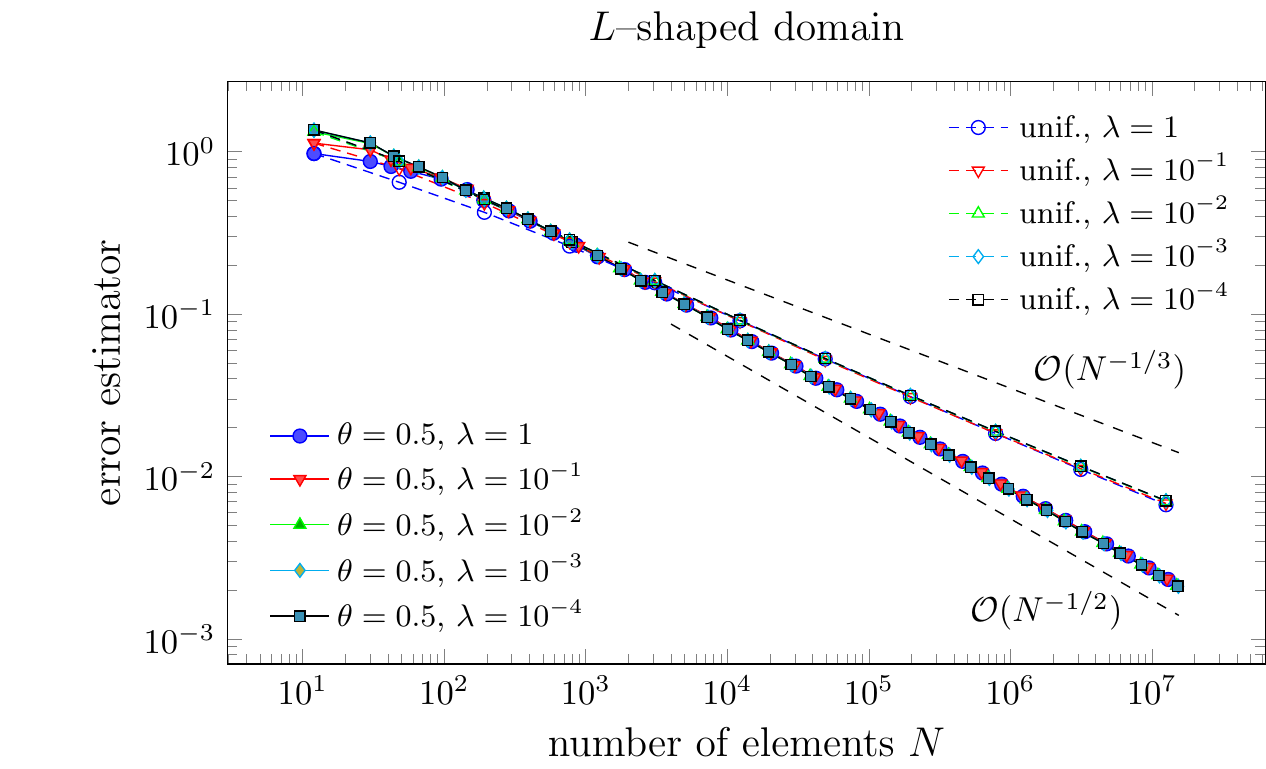}}\vspace{0.2cm}
	\raisebox{-0.5\height}{\includegraphics[width=0.49\textwidth]{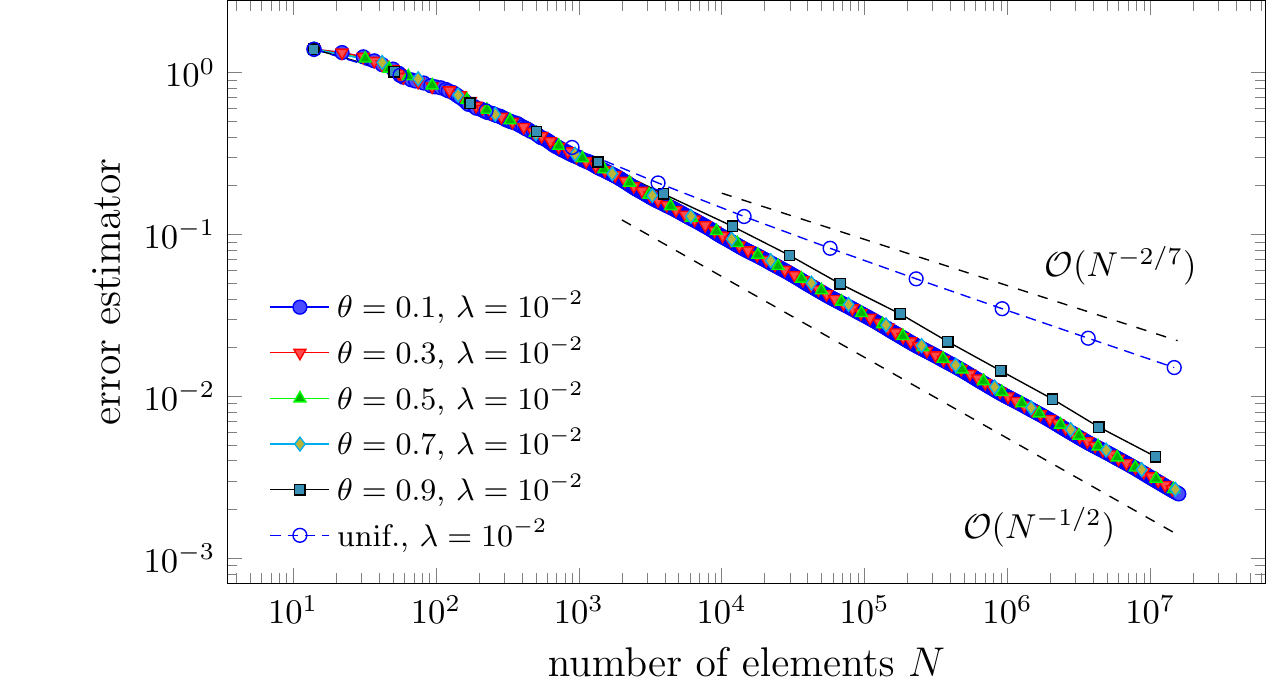}}
	\hfill
	\raisebox{-0.5\height}{\includegraphics[width=0.49\textwidth]{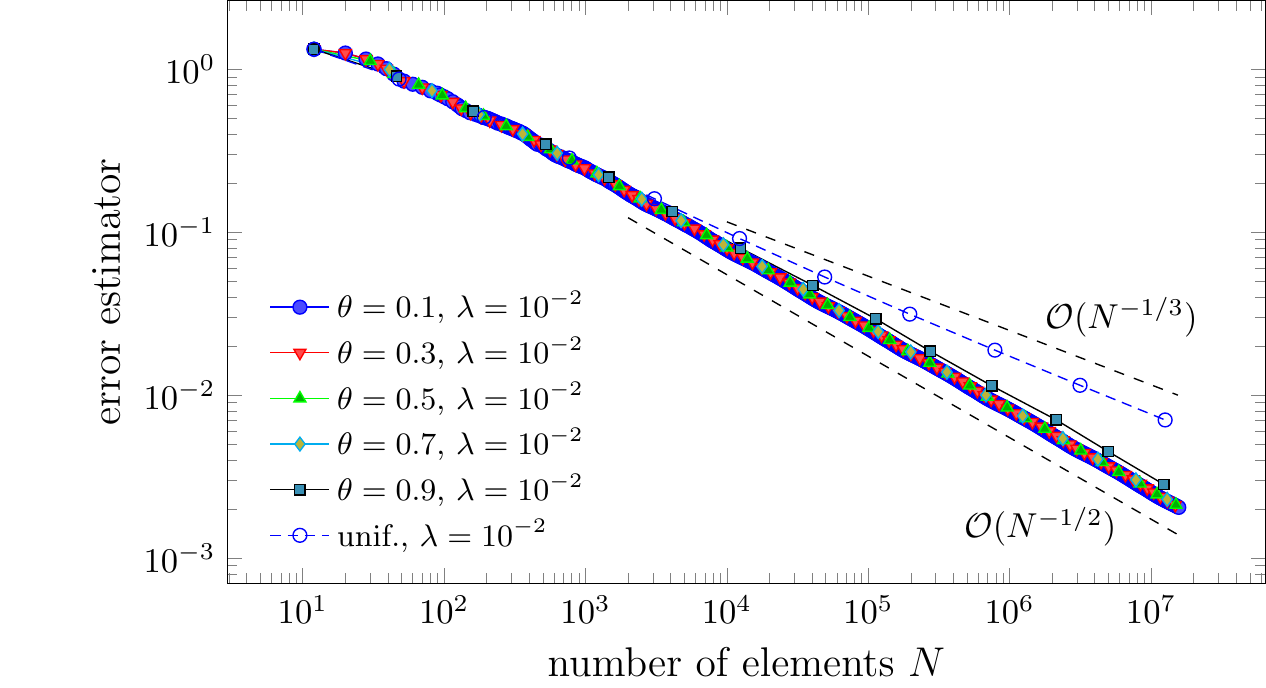}}
	\caption{Example from Section~\ref{section:nonlinear_numerics}: Error estimator $\eta_\ell(u_\ell^\k)$ of the last step of the Picard iteration with respect to the number of elements $N$ for $\theta=0.5$ and $\lambda\in\{1,10^{-1},\ldots,10^{-4}\}$ (top) as well as for $\lambda=10^{-2}$ and $\theta\in\{0.1, 0.3, \ldots, 0.9\}$ (bottom).}
\label{fig:conv_nonlinear}
\end{figure}

In Figure~\ref{fig:conv_nonlinear}, we compare Algorithm~\ref{algorithm} for different values of $\theta$ and $\lambda$, and uniform mesh-refinement. To this end, the error estimator $\eta_\ell(u_\ell^\k)$ of the last step of the Picard iteration is plotted over the number of elements. 
We see that uniform mesh-refinement leads to the suboptimal rate of convergence $\OO(N^{-2/7})$ for the $Z$--shaped domain and $\OO(N^{-1/3})$ for the $L$--shaped domain. Algorithm~\ref{algorithm} regains the optimal rate of convergence $\OO(N^{-1/2})$, independently of the actual choice of $\theta\in\{0.1,0.3, \ldots,0.9\}$ and $\lambda\in\{1, 10^{-1},\ldots,10^{-4}\}$ for both geometries.
Since $\eta_\ell(u_\ell^\k)\simeq\Delta_\ell^\k$, this again empirically confirms Theorem~\ref{theorem:rates}.

\begin{figure}
	\centering
	\raisebox{-0.5\height}{\includegraphics[width=0.49\textwidth]{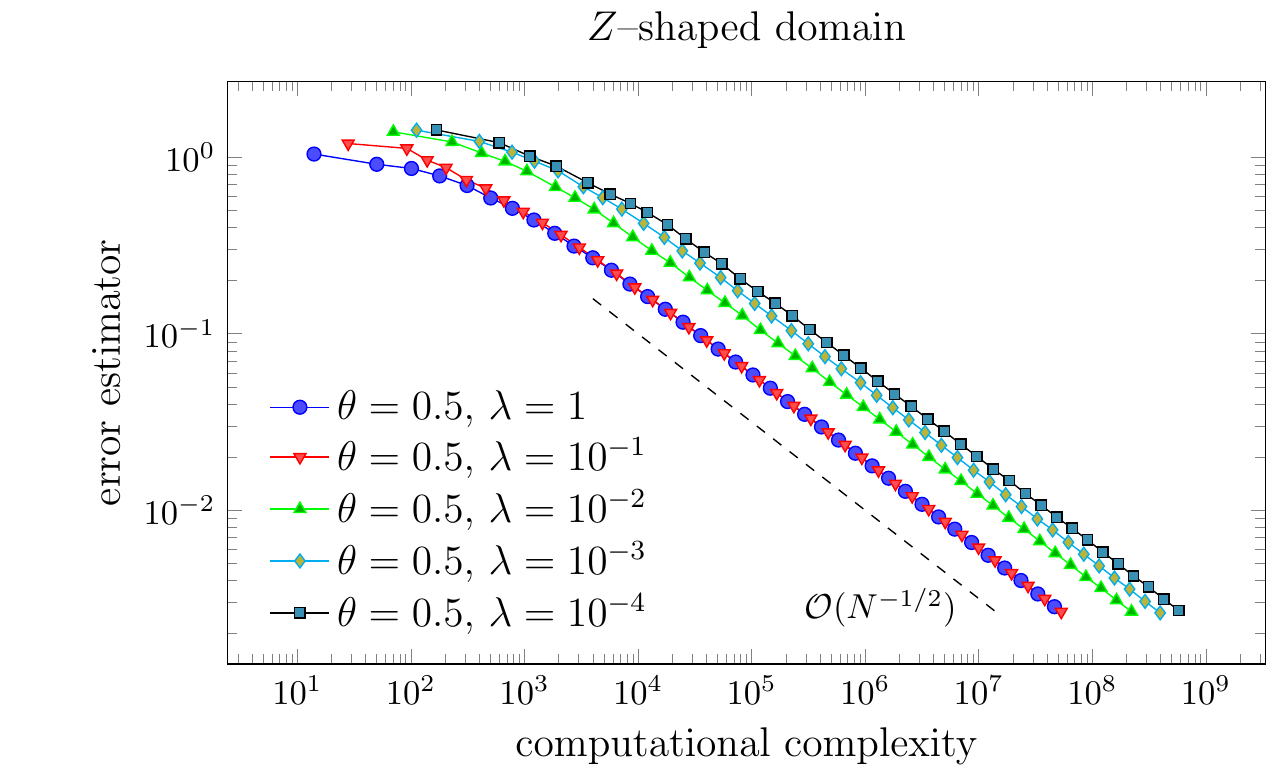}}
	\hfill
	\raisebox{-0.5\height}{\includegraphics[width=0.49\textwidth]{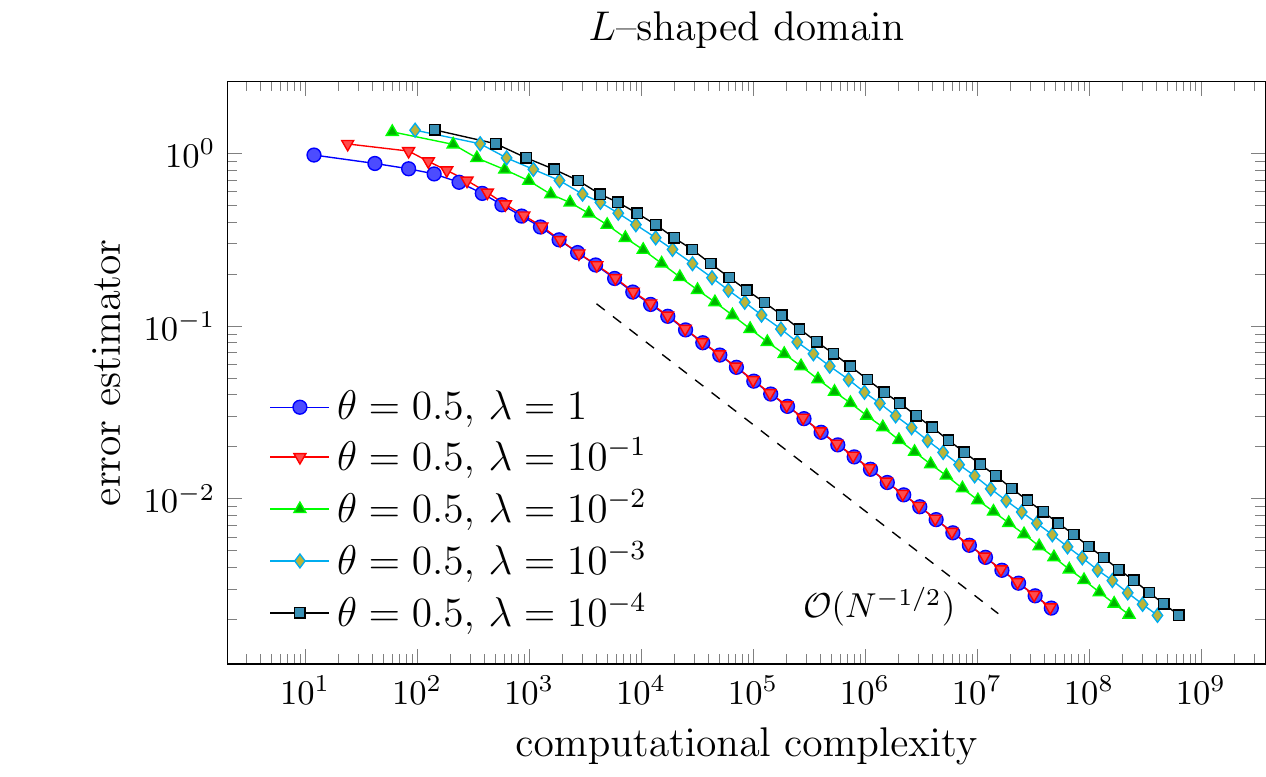}}\vspace{0.2cm}
	\raisebox{-0.5\height}{\includegraphics[width=0.49\textwidth]{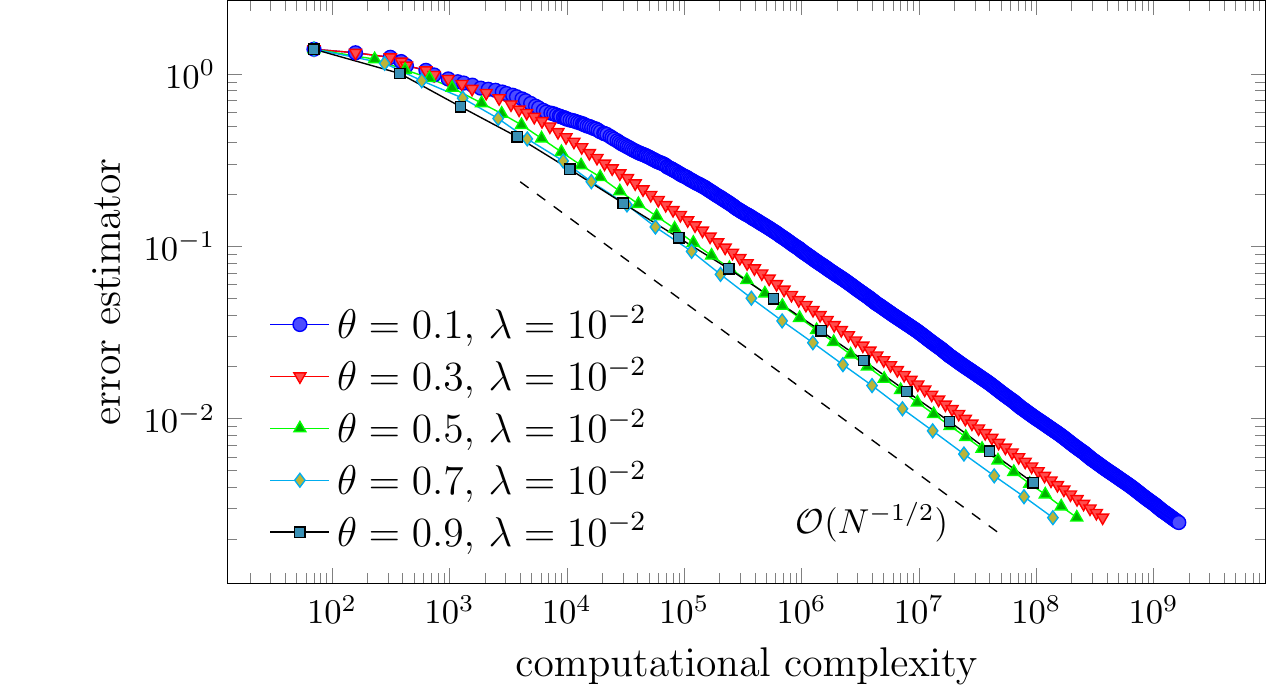}}
	\hfill
	\raisebox{-0.5\height}{\includegraphics[width=0.49\textwidth]{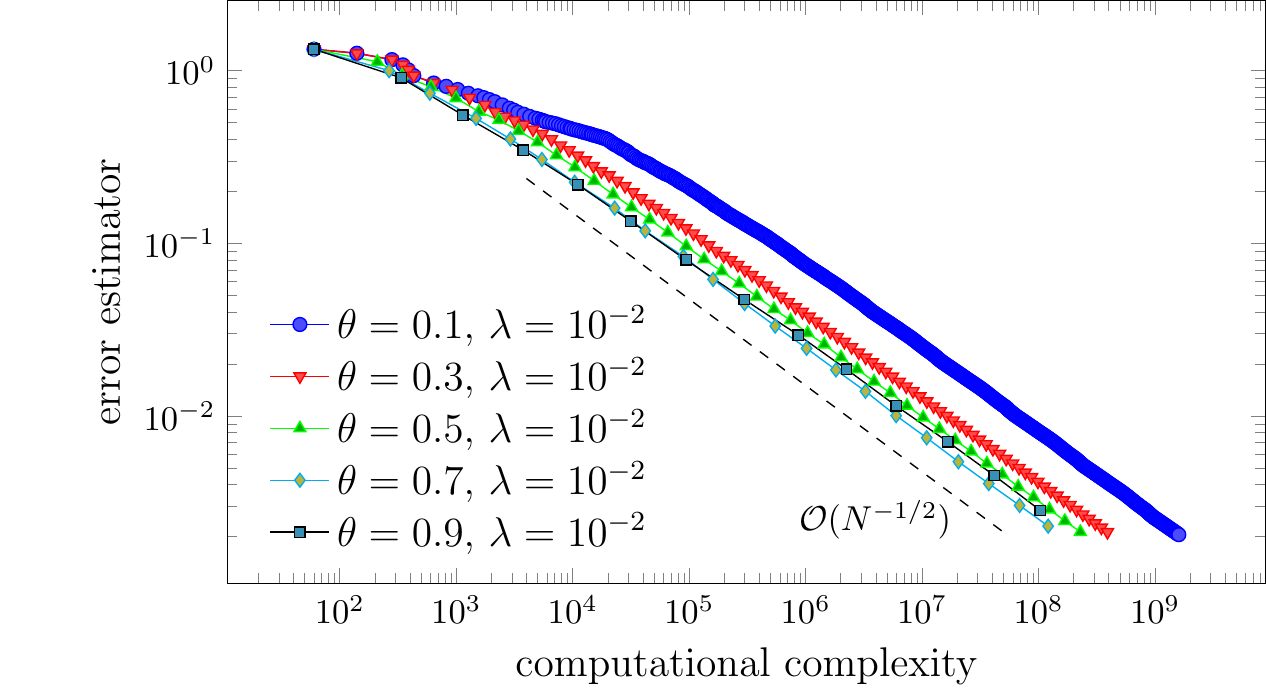}}
	\caption{Example from Section~\ref{section:nonlinear_numerics}: Error estimator $\eta_\ell(u_\ell^\k)$ of the last step of the Zarantonello iteration with respect to the cumulative sum $\sum_{(\ell',k')\le(\ell,\k)}\#\TT_{\ell'}$ for $\theta=0.5$ and $\lambda\in\{1,10^{-1},\ldots,10^{-4}\}$ (top) as well as for $\lambda=10^{-2}$ and $\theta\in\{0.1, 0.3, \ldots, 0.9\}$ (bottom).}
\label{fig:compl_nonlinear}
\end{figure}

In Figure~\ref{fig:compl_nonlinear}, 
we plot the estimator $\eta_\ell(u_\ell^\k)$  of the last step of the Zarantonello iteration over the cumulative sum $\sum_{(\ell',k')\le(\ell,\k)}\#\TT_{\ell'}$.
As predicted in Theorem~\ref{theorem:rates}, we observe that Algorithm~\ref{algorithm} regains the optimal order of convergence $\OO(N^{-1/2})$ with respect to the computational complexity.
The rate seems to be independent of the values of $\lambda$ or $\theta$.

We mention that the number of Zarantonello iterations (not displayed) behaves similarly as the number of PCG iterations of Figure~\ref{fig:nIt_linear}.
A larger value of $\lambda$ or a smaller value of $\theta$ lead to less iterations, where the number stays uniformly bounded in each case.

%%%%%%%%%%%%%%%%%%%%%%%%%%%%%%%%%%%%%%%%%%%%%%%%%%%%%%%%%%%%%%%%%%%%%%%%%%%%%%%%%%%
%\input{10_oldpaper}
%\clearpage

%%%%%%%%%%%%%%%%%%%%%%%%%%%%%%%%%%%%%%%%%%%%%%%%%%%%%%%%%%%%%%%%%%%%%%%%%%%%%%%%%%%
%%%%%%%%%%%%%%%%%%%%%%%%%%%%%%%%%%%%%%%%%%%%%%%%%%%%%%%%%%%%%%%%%%%%%%%%%%%%%%%%%%%

\bibliographystyle{alpha}
\bibliography{literature}

\begin{thebibliography}{BGMP16}

\bibitem[AGL13]{agl2013}
Mario Arioli, Emmanuil~H. Georgoulis, and Daniel Loghin.
\newblock Stopping criteria for adaptive finite element solvers.
\newblock {\em SIAM J. Sci. Comput.}, 35(3):A1537--A1559, 2013.

\bibitem[AO11]{ao11}
Mark Ainsworth and J.~Tinsley Oden.
\newblock {\em A posteriori error estimation in finite element analysis}.
\newblock John Wiley \& Sons, New York, 2011.

\bibitem[AW15]{aw2015}
Mario Amrein and Thomas~P. Wihler.
\newblock Fully adaptive {N}ewton-{G}alerkin methods for semilinear elliptic
  partial differential equations.
\newblock {\em SIAM J. Sci. Comput.}, 37(4):A1637--A1657, 2015.

\bibitem[BDD04]{bdd2004}
Peter Binev, Wolfgang Dahmen, and Ron DeVore.
\newblock Adaptive finite element methods with convergence rates.
\newblock {\em Numer. Math.}, 97(2):219--268, 2004.

\bibitem[BDK12]{bdk2012}
Liudmila Belenki, Lars Diening, and Christian Kreuzer.
\newblock Optimality of an adaptive finite element method for the
  {$p$}-{L}aplacian equation.
\newblock {\em IMA J. Numer. Anal.}, 32(2):484--510, 2012.

\bibitem[BGMP16]{bgmp16}
Annalisa Buffa, Carlotta Giannelli, Philipp Morgenstern, and Daniel Peterseim.
\newblock Complexity of hierarchical refinement for a class of admissible mesh
  configurations.
\newblock {\em Comput. Aided Geom. Design}, 47:83--92, 2016.

\bibitem[BHP17]{bhp2017}
Alex Bespalov, Alexander Haberl, and Dirk Praetorius.
\newblock Adaptive {FEM} with coarse initial mesh guarantees optimal
  convergence rates for compactly perturbed elliptic problems.
\newblock {\em Comput. Methods Appl. Mech. Engrg.}, 317:318--340, 2017.

\bibitem[BMS10]{bms2010}
Roland Becker, Shipeng Mao, and Zhongci Shi.
\newblock A convergent nonconforming adaptive finite element method with
  quasi-optimal complexity.
\newblock {\em SIAM J. Numer. Anal.}, 47(6):4639--4659, 2010.

\bibitem[BN10]{bn2010}
Andrea Bonito and Ricardo~H. Nochetto.
\newblock Quasi-optimal convergence rate of an adaptive discontinuous
  {G}alerkin method.
\newblock {\em SIAM J. Numer. Anal.}, 48(2):734--771, 2010.

\bibitem[CFPP14]{axioms}
Carsten Carstensen, Michael Feischl, Marcus Page, and Dirk Praetorius.
\newblock Axioms of adaptivity.
\newblock {\em Comput. Math. Appl.}, 67(6):1195--1253, 2014.

\bibitem[CG12]{cg2012}
Carsten Carstensen and Joscha Gedicke.
\newblock An adaptive finite element eigenvalue solver of asymptotic
  quasi-optimal computational complexity.
\newblock {\em SIAM J. Numer. Anal.}, 50(3):1029--1057, 2012.

\bibitem[CKNS08]{ckns2008}
J.~Manuel Casc\'{o}n, Christian Kreuzer, Ricardo~H. Nochetto, and Kunibert~G.
  Siebert.
\newblock Quasi-optimal convergence rate for an adaptive finite element method.
\newblock {\em SIAM J. Numer. Anal.}, 46(5):2524--2550, 2008.

\bibitem[CN12]{cn2012}
J.~Manuel Casc\'{o}n and Ricardo~H. Nochetto.
\newblock Quasioptimal cardinality of {AFEM} driven by nonresidual estimators.
\newblock {\em IMA J. Numer. Anal.}, 32(1):1--29, 2012.

\bibitem[CNX12]{cnx12}
Long Chen, Ricardo~H. Nochetto, and Jinchao Xu.
\newblock Optimal multilevel methods for graded bisection grids.
\newblock {\em Numer. Math.}, 120(1):1--34, 2012.

\bibitem[CW17]{cw2017}
Scott Congreve and Thomas~P. Wihler.
\newblock Iterative {G}alerkin discretizations for strongly monotone problems.
\newblock {\em J. Comput. Appl. Math.}, 311:457--472, 2017.

\bibitem[DK08]{dk2008}
Lars Diening and Christian Kreuzer.
\newblock Linear convergence of an adaptive finite element method for the
  {$p$}-{L}aplacian equation.
\newblock {\em SIAM J. Numer. Anal.}, 46(2):614--638, 2008.

\bibitem[D{\"{o}}r96]{doerfler1996}
Willy D{\"{o}}rfler.
\newblock A convergent adaptive algorithm for {P}oisson's equation.
\newblock {\em SIAM J. Numer. Anal.}, 33(3):1106--1124, 1996.

\bibitem[EEV11]{eev2011}
Linda {El Alaoui}, Alexandre Ern, and Martin Vohral\'{\i}k.
\newblock Guaranteed and robust a posteriori error estimates and balancing
  discretization and linearization errors for monotone nonlinear problems.
\newblock {\em Comput. Methods Appl. Mech. Engrg.}, 200(37-40):2782--2795,
  2011.

\bibitem[EV13]{ev2013}
Alexandre Ern and Martin Vohral\'{\i}k.
\newblock Adaptive inexact {N}ewton methods with a posteriori stopping criteria
  for nonlinear diffusion {PDE}s.
\newblock {\em SIAM J. Sci. Comput.}, 35(4):A1761--A1791, 2013.

\bibitem[FFP14]{ffp2014}
Michael Feischl, Thomas F\"{u}hrer, and Dirk Praetorius.
\newblock Adaptive {FEM} with optimal convergence rates for a certain class of
  nonsymmetric and possibly nonlinear problems.
\newblock {\em SIAM J. Numer. Anal.}, 52(2):601--625, 2014.

\bibitem[FHPS19]{abem+solve}
Thomas F\"uhrer, Alexander Haberl, Dirk Praetorius, and Stefan Schimanko.
\newblock Adaptive {B}{E}{M} with inexact {P}{C}{G} solver yields almost
  optimal computational costs.
\newblock {\em Numer. Math.}, 141:967--1008, 2019.

\bibitem[F{\"u}h14]{dissFuehrer}
Thomas F{\"u}hrer.
\newblock {\em Zur {K}opplung von finiten {E}lementen und {R}andelementen}.
\newblock PhD thesis, TU Wien, 2014.

\bibitem[GHP17]{ghp17}
Gregor Gantner, Daniel Haberlik, and Dirk Praetorius.
\newblock Adaptive {IGAFEM} with optimal convergence rates: {H}ierarchical
  {B}-splines.
\newblock {\em Math. Models Methods Appl. Sci.}, 27(14):2631--2674, 2017.

\bibitem[GHPS18]{banach}
Gregor Gantner, Alexander Haberl, Dirk Praetorius, and Bernhard Stiftner.
\newblock Rate optimal adaptive {FEM} with inexact solver for nonlinear
  operators.
\newblock {\em IMA J.\ Numer.\ Anal.}, 38:1797--1831, 2018.

\bibitem[GMZ11]{gmz2011}
Eduardo~M. Garau, Pedro Morin, and Carlos Zuppa.
\newblock Convergence of an adaptive {K}a\v{c}anov {FEM} for quasi-linear
  problems.
\newblock {\em Appl. Numer. Math.}, 61(4):512--529, 2011.

\bibitem[GMZ12]{gmz2012}
Eduardo~M. Garau, Pedro Morin, and Carlos Zuppa.
\newblock Quasi-optimal convergence rate of an {AFEM} for quasi-linear problems
  of monotone type.
\newblock {\em Numer. Math. Theory Methods Appl.}, 5(2):131--156, 2012.

\bibitem[GSS14]{gss2014}
Dietmar Gallistl, Mira Schedensack, and Rob~P. Stevenson.
\newblock A remark on newest vertex bisection in any space dimension.
\newblock {\em Comput. Methods Appl. Math.}, 14(3):317--320, 2014.

\bibitem[GVL13]{matcomp}
Gene~H. Golub and Charles~F. Van~Loan.
\newblock {\em Matrix computations}.
\newblock Johns Hopkins University Press, Baltimore, fourth edition, 2013.

\bibitem[HW18a]{hw18}
Pascal Heid and Thomas~P. Wihler.
\newblock Adaptive iterative linearization {G}alerkin methods for nonlinear
  problems.
\newblock {\em Preprint}, arXiv:1808.04990, 2018.

\bibitem[HW18b]{hw2018}
Paul Houston and Thomas~P. Wihler.
\newblock An {$hp$}-adaptive {N}ewton-discontinuous-{G}alerkin finite element
  approach for semilinear elliptic boundary value problems.
\newblock {\em Math. Comp.}, 87(314):2641--2674, 2018.

\bibitem[HW19]{hw19}
Pascal Heid and Thomas~P. Wihler.
\newblock On the convergence of adaptive iterative linearized {G}alerkin
  methods.
\newblock {\em Preprint}, arXiv:1905.06682, 2019.

\bibitem[KPP13]{kpp2013}
Michael Karkulik, David Pavlicek, and Dirk Praetorius.
\newblock On 2{D} newest vertex bisection: optimality of mesh-closure and
  {$H^1$}-stability of {$L_2$}-projection.
\newblock {\em Constr. Approx.}, 38(2):213--234, 2013.

\bibitem[MNS00]{mns2000}
Pedro Morin, Ricardo~H. Nochetto, and Kunibert~G. Siebert.
\newblock Data oscillation and convergence of adaptive {FEM}.
\newblock {\em SIAM J. Numer. Anal.}, 38(2):466--488, 2000.

\bibitem[MP15]{mp15}
Philipp Morgenstern and Daniel Peterseim.
\newblock Analysis-suitable adaptive t-mesh refinement with linear complexity.
\newblock {\em Comput. Aided Geom. Design}, 34:50--66, 2015.

\bibitem[PP19]{pp2019}
Carl-Martin Pfeiler and Dirk Praetorius.
\newblock D\"orfler marking with minimal cardinality is a linear complexity
  problem.
\newblock {\em In preparation}, 2019.

\bibitem[SMPZ08]{smpz08}
Joachim Sch{\"o}berl, Jens~M. Melenk, Clemens Pechstein, and Sabine Zaglmayr.
\newblock Additive {S}chwarz preconditioning for p-version triangular and
  tetrahedral finite elements.
\newblock {\em IMA J. Numer. Anal.}, 28(1):1--24, 2008.

\bibitem[Ste07]{stevenson2007}
Rob Stevenson.
\newblock Optimality of a standard adaptive finite element method.
\newblock {\em Found. Comput. Math.}, 7(2):245--269, 2007.

\bibitem[Ste08]{stevenson2008}
Rob Stevenson.
\newblock The completion of locally refined simplicial partitions created by
  bisection.
\newblock {\em Math. Comp.}, 77(261):227--241, 2008.

\bibitem[Vee02]{veeser2002}
Andreas Veeser.
\newblock Convergent adaptive finite elements for the nonlinear {L}aplacian.
\newblock {\em Numer. Math.}, 92(4):743--770, 2002.

\bibitem[Ver13]{verfuerth13}
R{\"u}diger Verf{\"u}rth.
\newblock {\em A posteriori error estimation techniques for finite element
  methods}.
\newblock Oxford University Press, Oxford, 2013.

\bibitem[WC06]{wc06}
Haijun Wu and Zhiming Chen.
\newblock Uniform convergence of multigrid {V}-cycle on adaptively refined
  finite element meshes for second order elliptic problems.
\newblock {\em Sci. China Math.}, 49(10):1405--1429, 2006.

\bibitem[XCH10]{xch10}
Xuejun Xu, Huangxin Chen, and Ronald H.~W. Hoppe.
\newblock Optimality of local multilevel methods on adaptively refined meshes
  for elliptic boundary value problems.
\newblock {\em J. Numer. Math.}, 18(1):59--90, 2010.

\bibitem[Zei90]{zeidler}
Eberhard Zeidler.
\newblock {\em Nonlinear functional analysis and its applications. {II}/{B}}.
\newblock Springer-Verlag, New York, 1990.

\end{thebibliography}

%%%%%%%%%%%%%%%%%%%%%%%%%%%%%%%%%%%%%%%%%%%%%%%%%%%%%%%%%%%%%%%%%%%%%%%%%%%%%%%%%%%
%\appendix
%\newpage
%\input{05_costs}

\end{document}